\newcommand{\qed} {\hspace {0.1in} \rule {1.5mm} {3.5mm}}
\newtheorem{lemma}{Lemma}[section]
\newtheorem{theorem}{Theorem}
\newtheorem{proposition}{Proposition}[section]
\newtheorem{definition}{Definition}[section]
\def\o{\omega}
\def\e{\epsilon}
\def\limo{\lim_{\omega}}
\def\<{\langle}
\def\>{\rangle}
\def\proof{\smallskip\noindent{\it Proof.} }
\def\bH{{\bf H}}
\def\bI{{\bf I}}
\def\bJ{{\bf J}}
\def\bR{{\mathbb R}}
\def\bN{{\mathbb N}}\def\bQ{{\mathbb Q}}
\def\bS{{\bf S}}
\def\cA{\mbox{$\cal A$}}
\def\cB{\mbox{$\cal B$}}
\def\cC{\mbox{$\cal C$}}
\def\cH{\mbox{$\cal H$}}
\def\cA{\mbox{$\cal A$}}
\def\cB{\mbox{$\cal B$}}
\def\cC{\mbox{$\cal C$}}
\def\cP{\mbox{$\cal P$}}
\def\cM{\mbox{$\cal M$}}
\def\cN{\mbox{$\cal N$}}
\def\cS{\mbox{$\cal S$}}
\def\to{\rightarrow}
\def\xo{{\bf X}}
\def\bo{\cB_\omega}
\def\muo{\mu}
\def\xok{\xo^{[k]}}
\def\ac{A^c}
\def\wch{\widetilde{\cH}}
\def\wp{\widetilde{P}}
\begin{document}

\title{Limits of Hypergraphs, Removal and Regularity Lemmas. A 
Non-standard Approach\footnote{AMS
Subject Classification: Primary 05C99, Secondary 82B99}}
\author{{\sc G\'abor Elek} and {\sc Bal\'azs Szegedy}}

\maketitle

\abstract{We study the integral and measure theory of the ultraproduct of
finite sets. As a main application we construct limit objects for
hypergraph sequences. We give a new proof for the Hypergraph Removal
Lemma and the Hypergraph Regularity Lemma.}

\tableofcontents

\section{Introduction}
The so-called Hypergraph Regularity Lemma (R\"odl-Skokan \cite{RSko},
Gowers \cite{Gow}, later generalized by Tao \cite{Tao}) is one of the 
most exciting result
in modern combinatorics. It exists in many different forms, strength
and generality. The main message in all of them is that every
$k$-uniform hypergraph can be approximated by a structure which
consists of boundedly many random-looking (quasi-random) parts for any
given error $\epsilon$. Another common feature of these theorems is
that they all come with a corresponding counting lemma \cite{NRS}
which describes how to estimate the frequency of a given small
hypergraph from the quasi-random approximation of a large hypergraph.
One of the most important applications of this method is that it
implies the Hypergraph Removal Lemma (first proved by
Nagle, R\"odl and Schacht \cite{NRS})  and by an
observation of Solymosi \cite{S} it also 
implies Szemer\'edi's celebrated theorem on
arithmetic progressions in dense subsets of the integers even in a
multidimensional setting.

 In this paper we present an analytic approach to the subject.
First, for any given sequence of hypergraphs we associate the
so-called ultralimit hypergraph,
which is a measurable hypergraph in a large (non-separable)
probability measure space. 
The ultralimit method enables us to convert theorems of finite
combinatorics to measure theoretic statements on our ultralimit
space. In the second step, using separable approximations
we translates these measure-theoretic
theorems to well-known results on the more familiar Lebesgue
spaces. This way in two steps we prove the Hypergraph Removal Lemma
from the Lebesgue Density Theorem and the Hypergraph Regularity
Lemma from the Rectangular Approximation Lemma of Lebesgue Spaces.

\noindent
 We also construct a Hypergraph Limit Object to
convergent hypergraph sequences directly from the
ultralimit hypergraph. This construction is the generalization of the
limit graph method \cite{Borgs},\cite{LSZ}
where limits of sequences of
dense graphs are studied. According to a definition by Borgs
et. al. \cite{Borgs} a
graph sequence is called convergent if the density of any fixed graph
in the terms of the sequence is convergent. In a paper by Lovasz and
Szegedy \cite{LSZ} it is shown that a convergent graph sequence has a natural
limit object which is a two variable function $w:[0,1]^2\to[0,1]$ with
$w(x,y)=w(y,x)$. Informally speaking, $w$ is an infinite analogue of
the adjacency matrix. Our main theorem is a generalization of this
theorem to $k$-uniform hypergraphs. We also
show that limits of $k$-uniform hypergraphs can be represented by $2^k-2$
variable measurable functions $w:[0,1]^{2^k-2}\to[0,1]$ such that the
coordinates are indexed by the proper non empty subsets of $\{1,2,\dots,k\}$
and $w$ is invariant under the induced action of $S_k$ on the coordinates.

\vskip0.2in
\noindent
{\bf Acknowledgement:} We are very indebted to 
Terence Tao and L\'aszl\'o Lov\'asz for
helpful discussions.

\section{Analysis on the ultraproduct of finite measure spaces}

\subsection{Ultraproducts of finite sets}\label{sec1}
First we recall the ultraproduct construction of finite probability
measure spaces (see \cite{Loeb}).
Let $\{X_i\}^\infty_{i=1}$ be finite sets.
We always suppose that $|X_1|<|X_2|<|X_3|<\dots$ Let $\omega$ be a nonprincipal
ultrafilter and $\lim_{\o}:l^\infty(\bN)\to\bR$ be the corresponding 
ultralimit. Recall that $\limo$ is a bounded linear functional
such that for any $\epsilon>0$ and $\{a_n\}_{n=1}^\infty\in l^\infty(\bN)$
$$\{ i\in \bN\,\mid\, a_i\in [\limo a_n-\e, \limo a_n +e]\}\in\omega\,.$$
The ultraproduct of the sets $X_i$ is defined as follows.

\noindent
Let $\widetilde{X}=\prod^\infty_{i=1}X_i$. We say that
$\widetilde{p}=\{p_i\}^\infty_{i=1}, \widetilde{q}=\{q_i\}^\infty_{i=1}\in
\widetilde{X}$ are equivalent, 
$\widetilde{p}\sim\widetilde{q}$, if
$$\{i\in \bN\mid p_i=q_i\}\in \omega\,.$$
Define $\xo:=\widetilde{X}/\sim$.
Now let $\cP(X_i)$ denote the Boolean-algebra of subsets of $X_i$, with the
normalized measure $\mu_i(A)=\frac{|A|}{|X_i|}\,.$
Then let $\widetilde{\cP}=\prod^\infty_{i=1}\cP(X_i)$ and
$\cP=\widetilde{P}/I$, where $I$ is the ideal of elements
$\{A_i\}^\infty_{i=1}$
such that 
$\{i\in \bN\mid A_i=\emptyset\}\in \omega\,.$
Notice that the elements of $\cP$ can be identified with certain subsets
of $\xo$: If 
$$\overline{p}=[\{p_i\}^\infty_{i=1}]\in \xo\,\,\mbox{and}\,\, \overline{A}=
[\{A_i\}^\infty_{i=1}]\in \cP$$
then $\overline{p}\in \overline{A}$ if 
$\{i\in \bN\mid p_i\in A_i\}\in \omega\,.$
Clearly, if $\overline{A}=
[\{A_i\}^\infty_{i=1}]$, $\overline{B}=
[\{B_i\}^\infty_{i=1}]$ then
\begin{itemize}
\item 
$\overline{A}^c=[\{A^c_i\}^\infty_{i=1}]\,,$
\item
$\overline{A}\cup \overline{B}=[\{A_i\cup B_i\}^\infty_{i=1}]\,,$
\item
$\overline{A}\cap \overline{B}=[\{A_i\cap B_i\}^\infty_{i=1}]\,.$
\end{itemize}
That is $\cP$ is a Boolean algebra on $\xo$. 
Now let $\muo(\overline{A})=\lim_{\o} \mu_i(A_i)$. Then $\muo:\cP\to\bR$ is
a finitely additive probability measure.
\begin{definition}
$N\subseteq \xo$ is a {\bf nullset} if for any $\e>0$ there exists
a set $\overline{A_\e}\in\cP$ such that $N\subseteq \overline{A_\e}$
and $\muo(\overline{A_\e})\leq \e$. The set of nullsets is denoted
by $\cN$.
\end{definition}
\begin{proposition}
$\cN$ satisfies the following properties:
\begin{itemize}
\item if $N\in \cN$ and $M\subseteq N$, then $M\in \cN$.
\item If $\{N_k\}^\infty_{k=1}$ are elements of $\cN$ then
$\cup^\infty_{k=1} N_k\in \cN$ as well.
\end{itemize} \end{proposition}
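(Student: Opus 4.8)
The first property is a triviality that I would dispose of in one line: if $N\in\cN$ and $M\subseteq N$, then for every $\e>0$ the very set $\overline{A_\e}\in\cP$ witnessing $N\in\cN$ also satisfies $M\subseteq N\subseteq\overline{A_\e}$ and $\muo(\overline{A_\e})\leq\e$, so $M$ is a nullset as well. No work is needed here.

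The second property is where the content lies, and I would begin by recording the naive attempt in order to see why it fails. For each $k$ I can cover $N_k$ by some $\overline{A_k}\in\cP$ with $\muo(\overline{A_k})\leq\e/2^{k+1}$, and I would like to take $\bigcup_k\overline{A_k}$. The obstacle, and the whole difficulty of the proposition, is that $\cP$ is merely a Boolean algebra: it is closed under finite unions but not under countable ones, so $\bigcup_k\overline{A_k}$ need not belong to $\cP$ at all, and $\muo$ of it is meaningless. My plan is therefore to manufacture a \emph{single} element of $\cP$ that still contains all the $N_k$, by a diagonal truncation along the ultrafilter $\o$.

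Concretely, I would write $\overline{A_k}=[\{A_{k,i}\}_{i=1}^\infty]$, so that $\limo\mu_i(A_{k,i})\leq\e/2^{k+1}$, and for each $k$ set $S_k=\{i\in\bN\mid\mu_i(A_{k,i})\leq\e/2^k\}$; the defining property of the ultralimit gives $S_k\in\o$, and hence every finite intersection $T_k=S_1\cap\cdots\cap S_k$ lies in $\o$ too. Since $T_1\supseteq T_2\supseteq\cdots$, I can define $f(i)=\sup\{n\mid i\in T_n\}$ and then $B_i=\bigcup_{k\leq f(i)}A_{k,i}\subseteq X_i$, with $\overline{B}=[\{B_i\}_{i=1}^\infty]\in\cP$. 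The two things to check are then: first, that for every index $i$ one has $i\in T_{f(i)}$, whence $\mu_i(A_{k,i})\leq\e/2^k$ for all $k\leq f(i)$ and so $\mu_i(B_i)\leq\sum_{k\leq f(i)}\e/2^k<\e$ pointwise, giving $\muo(\overline{B})=\limo\mu_i(B_i)\leq\e$; and second, that $\{i\mid f(i)\geq k\}=T_k\in\o$, which means $A_{k,i}\subseteq B_i$ for $\o$-almost every $i$ and therefore $\overline{A_k}\subseteq\overline{B}$, so $N_k\subseteq\overline{B}$ for every $k$. Combining these, $\bigcup_k N_k\subseteq\overline{B}$ with $\muo(\overline{B})\leq\e$, and letting $\e\to 0$ yields $\bigcup_k N_k\in\cN$.

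The step I expect to be the crux is the reconciliation of the \emph{non-uniform} estimates: the bound $\mu_i(A_{k,i})\leq\e/2^k$ is valid only on a set $S_k\in\o$ that shrinks with $k$, never simultaneously for all $k$ and all $i$. The telescoping intersections $T_k$ together with the cutoff $f(i)$ are precisely the device that overcomes this — truncating the union at level $f(i)$ keeps $\mu_i(B_i)$ below $\e$ for \emph{every} index $i$ at once, while $T_k\in\o$ guarantees that no individual $\overline{A_k}$ is discarded along $\o$. This is the finitely-additive surrogate for countable subadditivity, essentially the countable-saturation feature of the ultraproduct, and getting that trade-off exactly right is the one place where genuine care is required.
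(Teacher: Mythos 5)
Your proof is correct and is essentially the paper's own argument: your telescoping ultrafilter sets $T_k$ and the cutoff $f(i)$ are exactly the diagonal truncation the paper uses in its Lemma 2.1, which it isolates as a standalone statement (producing a single $\overline{B}\in\mathcal{P}$ of measure exactly $t$ containing all the $\overline{A_k}$, a stronger form it reuses later to prove countable additivity of $\mu$ on $\mathcal{B}_\omega$) and then applies to covers of measure $\epsilon/2^j$. The only cosmetic differences are that you inline that lemma and bound $\mu_i(B_i)$ pointwise by subadditivity instead of tracking the measures of the finite unions $\cup_{k\leq l}A_{k,i}$; note in passing that when $f(i)=\infty$ your estimate gives $\mu_i(B_i)\leq\epsilon$ rather than $<\epsilon$, which is harmless since $\epsilon$ is arbitrary.
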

\begin{proof}
We need the following lemma.
\begin{lemma}\label{l6}
If $\{\overline{A_k}\}^\infty_{k=1}$ are elements of $\cP$
and $\lim_{l\to\infty} \muo(\cup^l_{k=1}\overline{A_k})=t$  then
there exists an element $\overline{B}\in\cP$ such that
$\muo(\overline{B})=t$ and $\overline{A_k}\subseteq \overline{B}$
for all $k\in \bN$.
\end{lemma}
\begin{proof}
Let $\overline{B_l}=\cup^l_{k=1}\overline{A_k}$, $\muo(\overline{B_l})=t_l$,
$\lim_{l\to\infty} t_l=t\,.$ Let
$$T_l=\left\{i\in\bN\,\mid\,
|\mu_i(\cup^l_{k=1} A^i_k)-t_l|\leq \frac{1}{2^l}\,\right\}\,,$$
where $\overline{A_k}=[\{A^i_k\}^\infty_{i=1}]\,.$
Observe that $T_l\in \omega$. If $i\in \cap^m_{l=1}T_l$ but
$i\notin T_{m+1}$, then let $C_i=\cup^m_{k=1} A^i_k\,.$
If $i\in T_l$ for all $l\in \bN$, then clearly $\mu_i(\cup^\infty_{k=1}
A^i_k)=t$ and
 we set $C_i:=\cup^\infty_{k=1} A^i_k\,.$
Let $\overline{B}:=[\{C_i\}^\infty_{i=1}]\,.$ Then
$\muo(\overline{B})=t$ and for any $k\in\bN$:
$\overline{A_k}\subseteq \overline{B}$. \qed \end{proof} \vskip 0.2in

\noindent
Now suppose that for any $j\geq 1$, $\overline{A_j}\in\cN$. Let
$\overline{B}^\e_j\in\cP$ such that $\overline{A_j}\subseteq
\overline{B}^\e_j$ and $\muo(\overline{B}^\e_j)<\e\frac{1}{2^j}$.
Then by the previous lemma, there exists $\overline{B}^\e\in\cP$ such that 
for any $j\geq 1$ 
$\overline{B}^\e_j\subseteq \overline{B}^\e$ and $\muo(\overline{B}^\e)\leq\e$.
Since $\cup^\infty_{j=1} \overline{A_j}\subseteq \overline{B}^\e$, our
proposition follows. \qed \end{proof} \vskip 0.2in
\begin{definition}
We call $B\subseteq \xo$ a {\bf measureable set} if there exists 
$\widetilde{B}\in \cP$
such that $B\triangle \widetilde{B}\in \cN$.
\end{definition}
\begin{theorem}
The measurable sets form a $\sigma$-algebra $\bo$ and $\muo(B)=
\muo(\widetilde{B})$
defines a probability measure on $\bo$.
\end{theorem}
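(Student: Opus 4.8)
The plan is to verify the axioms in the natural order: first the algebra structure of $\bo$, then well-definedness of the measure, then finite additivity, and finally the two genuinely infinitary statements (closure under countable unions and countable additivity), both of which will rest on Lemma \ref{l6}.

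First I would check that $\bo$ is an algebra. Since $\cN$ is closed under passing to subsets and under countable unions by the preceding Proposition, and since for $\widetilde B\in\cP$ one has $B^c\triangle\widetilde B^c=B\triangle\widetilde B$ together with $(B_1\cup B_2)\triangle(\widetilde B_1\cup\widetilde B_2)\sq(B_1\triangle\widetilde B_1)\cup(B_2\triangle\widetilde B_2)$, closure of $\bo$ under complements and finite unions follows immediately from the corresponding closure properties of the Boolean algebra $\cP$.

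Next I would establish well-definedness. The key observation is that an element of $\cP$ that is also a nullset has measure zero: if $\overline A\in\cP\cap\cN$ then for every $\e$ there is $\overline A_\e\in\cP$ with $\overline A\sq\overline A_\e$ and $\muo(\overline A_\e)\le\e$, so monotonicity of $\muo$ on $\cP$ gives $\muo(\overline A)=0$. Consequently, if $\widetilde B_1,\widetilde B_2\in\cP$ both witness $B\in\bo$, then $\widetilde B_1\triangle\widetilde B_2\in\cP\cap\cN$ has measure zero, whence finite additivity of $\muo$ on $\cP$ forces $\muo(\widetilde B_1)=\muo(\widetilde B_2)$; so $\muo(B):=\muo(\widetilde B)$ is unambiguous. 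The same symmetric-difference computation shows $\muo$ is finitely additive on $\bo$.

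The hard part will be the countable operations, since $\cP$ is merely a finitely additive Boolean algebra and a countable union of its elements need not lie in $\cP$; this is precisely the gap Lemma \ref{l6} is designed to close. Given $B_k\in\bo$ with witnesses $\widetilde B_k$, I would replace $\widetilde B_k$ by the increasing sequence $\cup_{j\le k}\widetilde B_j\in\cP$ (a witness for $\cup_{j\le k}B_j$), whose measures increase to some $t\le 1$; Lemma \ref{l6} then yields $\overline B\in\cP$ with $\muo(\overline B)=t$ and $\widetilde B_k\sq\overline B$ for all $k$. This $\overline B$ witnesses $\cup_k B_k$, because $(\cup_k B_k)\triangle(\cup_k\widetilde B_k)\sq\cup_k(B_k\triangle\widetilde B_k)\in\cN$, while $\overline B\setminus\cup_k\widetilde B_k\sq\overline B\setminus\widetilde B_k$ is covered by a set of measure $t-\muo(\widetilde B_k)\to 0$, so lies in $\cN$; combining the two symmetric differences gives $(\cup_k B_k)\triangle\overline B\in\cN$, proving $\bo$ is a $\sigma$-algebra. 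Finally, taking the $B_k$ pairwise disjoint, the same $\overline B$ gives $\muo(\cup_k B_k)=t=\limn\muo(\cup_{j\le n}\widetilde B_j)=\limn\muo(\cup_{j\le n}B_j)=\limn\sum_{j\le n}\muo(B_j)=\sum_k\muo(B_k)$, where the middle equalities use that $\cup_{j\le n}\widetilde B_j$ witnesses $\cup_{j\le n}B_j$ and the already established finite additivity. Since $\muo(\xo)=1$, this is a probability measure.
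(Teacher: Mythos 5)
Your proposal is correct and takes essentially the same route as the paper: both rest entirely on Lemma \ref{l6} for the countable axioms, producing $\overline{B}\in\cP$ containing all the witnesses with $\muo(\overline{B})=t$, and then concluding that $\overline{B}\setminus\cup_k\widetilde{B}_k$ is a nullset because it is contained in $\overline{B}\setminus\widetilde{B}_k\in\cP$ whose measure $t-\muo(\widetilde{B}_k)$ tends to $0$. The only cosmetic differences are that you run the argument on increasing unions $\cup_{j\le k}\widetilde{B}_j$ where the paper first reduces to pairwise disjoint elements of $\cP$, and that you spell out the well-definedness step (elements of $\cP\cap\cN$ have $\muo$-measure zero) which the paper asserts without proof.
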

\begin{proof}
We call two measurable sets $B$ and $B'$ equivalent, $B\cong B'$ if 
$B\triangle B'\in \cN$.
Clearly, if $A\cong A'$, $B\cong B'$ then $A^c\cong (A')^c$,
$A\cup B\cong A'\cup B'$, $A\cap B\cong A'\cap B'$. Also if
$A,B\in\cP$ and $A\cong B$, then $\muo(A)=\muo(B)$. That is
the measurable sets form a Boolean algebra with a finitely additive measure.
Hence it is enough to prove that if $\overline{A_k}\in\cP$ are disjoint sets,
then there exists $\overline{A}\in\cP$ such that
$\cup^\infty_{k=1}\overline{A_k}\cong \overline{A}$ and
$\muo(\overline{A})=\sum^\infty_{k=1}\muo(\overline{A_k})\,.$
Note that by Lemma \ref{l6} there exists $\overline{A}\in\cP$ such that
$\muo(\overline{A})=\sum^\infty_{k=1}\muo(\overline{A_k})$ and
$\overline{A_k}\subseteq \overline{A}$ for all $k\geq 1$.
Then for any $j\geq 1$,
$$\overline{A}\backslash \cup^\infty_{k=1}\overline{A_i}\subseteq
 \overline{A}\backslash \cup^j_{k=1}\overline{A_k}\in \cP\,.$$
Since $\lim_{j\to\infty}\muo(\overline{A}\backslash
\cup^j_{k=1}\overline{A_k})=0,  
\overline{A}\backslash \cup^\infty_{k=1}\overline{A_k}\in \cN$ thus
$\cup^\infty_{k=1}\overline{A_k}\cong
\overline{A}$.\qed \end{proof} \vskip 0.2in

\noindent
Hence we constructed an atomless probability measure space $(\xo,\bo,\muo)$.
Note that this space is non-separable, that is it is not measurably 
equivalent to the interval with the Lebesgue measure.

\subsection{Measureable functions and their integrals}
Let $\{X_i\}_{i=1}^\infty$ be finite sets as in the previous section and 
$f_i:X_i\to[-d,d]$ be real functions, where $d>0$. Then one can define
a function $f:\xo\to[-d,d]$ whose value at $\overline{p}=
[\{p_i\}^\infty_{i=1}]$
is the ultralimit of $\{f_i(p_i)\}^\infty_{i=1}$. We say that
$f$ is the ultralimit of the functions $\{f_i\}^\infty_{i=1}$. From now on
we call such bounded functions {\bf ultralimit functions}.
\begin{lemma}\label{ultralimit}
The ultralimit functions are  measurable on $\xo$ and 
$$\int_{\xo} f d\,\muo=\limo \frac{\sum_{p\in X_i} f_i(p)}{|X_i|}\,.$$
\end{lemma}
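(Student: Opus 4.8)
The plan is to prove the two claims of Lemma~\ref{ultralimit} separately: first that every ultralimit function $f$ is measurable, and then that its integral equals the prescribed ultralimit of averages. For the measurability part, I would exploit that $f$ is bounded into $[-d,d]$ and approximate it by simple ultralimit functions. Given $\e>0$, partition $[-d,d]$ into finitely many intervals $J_1,\dots,J_m$ of length at most $\e$. For each $i$ and each $\ell$, let $A^i_\ell = f_i^{-1}(J_\ell)\subseteq X_i$; these partition $X_i$. Passing to the ultraproduct, $\overline{A_\ell}=[\{A^i_\ell\}_{i=1}^\infty]\in\cP$, and I claim these sets partition $\xo$ up to a nullset and that on $\overline{A_\ell}$ the function $f$ takes values within (essentially) $J_\ell$. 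The key point is compatibility of the ultralimit defining $f$ with the ultraproduct membership defining the $\overline{A_\ell}$: for $\overline{p}=[\{p_i\}]$, the value $f(\overline{p})=\lim_\o f_i(p_i)$ lies in the closure of whichever interval satisfies $\{i\mid p_i\in A^i_\ell\}\in\o$, since $\lim_\o$ respects membership in a fixed interval on an $\o$-large set. Thus $f$ agrees up to $\e$ with the simple $\cP$-measurable function $\sum_\ell c_\ell \mathbf{1}_{\overline{A_\ell}}$ (with $c_\ell\in J_\ell$), and letting $\e\to 0$ gives a uniform limit of $\cP$-measurable functions, hence $f\in\bo$ by the Theorem above.

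**For the integral formula**, the natural route is to verify it first for indicator functions, then extend by linearity and the uniform approximation just established. If $f_i=\mathbf{1}_{A_i}$ with $\overline{A}=[\{A_i\}]\in\cP$, then $f=\mathbf{1}_{\overline{A}}$ up to a nullset, and
$$\int_{\xo} f\,d\muo = \muo(\overline{A}) = \limo \mu_i(A_i) = \limo \frac{|A_i|}{|X_i|} = \limo \frac{\sum_{p\in X_i}\mathbf{1}_{A_i}(p)}{|X_i|}\,,$$
which is exactly the desired identity in this case. By linearity of both the integral and of $\limo$ (as a linear functional), the formula then holds for every simple ultralimit function $g=\sum_\ell c_\ell\mathbf{1}_{\overline{A_\ell}}$, since such $g$ is the ultralimit of the finite-level simple functions $g_i=\sum_\ell c_\ell\mathbf{1}_{A^i_\ell}$.

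**The extension to general $f$** proceeds by the approximation from the first paragraph. Given the simple ultralimit functions $g^{(\e)}$ built from the interval partition with $\|f-g^{(\e)}\|_\infty\le\e$, I note that the corresponding finite-level functions satisfy $\|f_i-g^{(\e)}_i\|_\infty\le\e$ for $\o$-almost every $i$, so both the left-hand integrals and the right-hand averages differ from their $g^{(\e)}$-counterparts by at most $\e$; letting $\e\to 0$ closes the argument. The main obstacle I anticipate is the measurability step, specifically checking carefully that the ultralimit defining the value of $f$ is genuinely compatible with the ultraproduct sets $\overline{A_\ell}$ — one must rule out boundary effects where $f_i(p_i)$ straddles two adjacent intervals for an $\o$-large set of $i$. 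This is handled by observing that $\lim_\o f_i(p_i)$ is a single real number, so for small enough $\e$ it lies in the interior of some $J_\ell$ and the set $\{i\mid f_i(p_i)\in J_\ell\}$ is forced into $\o$; the finitely-additive-to-$\sigma$-additive passage is already supplied by the preceding Theorem, so no further limiting machinery is needed there.
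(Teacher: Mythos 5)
Your proof is correct, and its integral half is essentially the paper's own argument: verify the formula for indicators, extend by linearity of both the integral and $\limo$ to simple ultralimit functions, and close with the uniform bound $\|f_i-g^{(\e)}_i\|_\infty\le\e$ (the paper does exactly this with $J_\ell$ the dyadic intervals of length $2^{-k}$, carrying out the indicator-plus-linearity step in a single display). Where you genuinely diverge is the measurability half. The paper proves directly that each level set $f_{[a,b]}=\{\overline{p}\in\xo\mid a\le f(\overline{p})\le b\}$ is measurable, handling the mismatch between the ultraproduct of preimages $[\{f_i^{-1}([a,b])\}^\infty_{i=1}]$ and the preimage $f_{[a,b]}$ of the ultralimit by enlarging the interval and intersecting: $f_{[a,b]}=\bigcap_{n=1}^\infty P_n$ with $P_n=[\{f_i^{-1}([a-\frac1n,b+\frac1n])\}^\infty_{i=1}]\in\cP$. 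You instead recycle the step-function machinery and invoke the standard fact that a uniform limit of $\cP$-simple functions is $\bo$-measurable; this is more economical (one approximation serves both claims) but leans on the $\sigma$-algebra property of $\bo$ established in the preceding Theorem, whereas the paper's level-set argument is more self-contained and exhibits the level sets as explicit countable intersections of $\cP$-sets. Both routes confront the same subtlety (ultraproduct of preimages versus preimage of the ultralimit), the paper by interval enlargement, you by the closure observation. Two small corrections to your write-up: the sets $\overline{A_\ell}$ partition $\xo$ exactly, not merely up to a nullset, since for each $\overline{p}$ the index sets $\{i\mid p_i\in A^i_\ell\}$ partition $\bN$ and the ultrafilter selects exactly one of them; and the ``boundary effect'' you raise in your final paragraph needs no resolution by shrinking $\e$ --- your closure observation ($f(\overline{p})\in\overline{J_\ell}$, hence $|f(\overline{p})-c_\ell|\le\e$) already disposes of it, while the interior-point argument you sketch there would not survive changing the partition as $\e\to0$ and is best deleted.
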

\begin{proof} Let $-d\leq a\leq b\leq d$ be real numbers. It is enough to
prove that
$f_{[a,b]}=\{\overline{p}\in \xo\mid\, 
a\leq f(\overline{p})\leq b\}$ is measurable.
Let $f_{[a,b]}^i=\{p\in X_i\mid a\leq f_i(p)\leq b\}\,.$
Note that $[\{f_{[a,b]}^i\}^\infty_{i=1}]$ is
not necessarily equal to $f_{[a,b]}$. Nevertheless if
$$P_n:= [\{f^i_{[a-\frac{1}{n},b+\frac{1}{n}]}\}^\infty_{i=1}]\,,$$
then $P_n\in\cP$ and $f_{[a,b]}=\cap^\infty_{n=1} P_n$. This shows that
$f_{[a,b]}$ is a measurable set. Hence the function $f$ is measurable.
Now we prove the integral formula.
Let us consider the function $g_i$ on $X_i$ which takes the value
$\frac{j}{2^k}$ if $f_i$ takes a value not greater than
$\frac{j}{2^k}$ but less than $\frac{j+1}{2^k}$ for 
$-N_k\leq j \leq N_k$, where $N_k=[d 2^k]+1$.
Clearly
$ |\limo g_i - f|\leq \frac{1}{2^k}$ on $\xo$. Observe that
$g=\limo g_i$ is a measurable step-function on $\xo$ taking the
value $\frac{j}{2^k}$ on $C_j= [\{f^i_{[\frac{j}{2^k}, 
\frac{j+1}{2^k})}\}^\infty
_{i=1}]$. Hence,
$$\int_X g\,d\muo=\limo\left(\sum^{N_k}_{j=-N_k} 
\frac{|f^i_{[\frac{j}{2^k},\frac{j+1}{2^k})}|}{|X_i|}\frac{j} {2^k}\right)\,.$$
Also, $|g-f|\leq\frac{1}{2^k}$ on $\xo$ uniformly, that is
$|\int_{\xo} f\,d\muo - \int_{\xo} g\,d\muo|\leq \frac{1}{2^k}\,.$
Notice that for any $i\geq 1$
$$\left|\sum^{N_k}_{j=-N_k} 
\frac{|f^i_{[\frac{j}{2^k},\frac{j+1}{2^k})}|}{|X_i|}\frac{j} {2^k}-
\frac{\sum_{p\in X_i} f_i(p)}{|X_i|}\right|\leq \frac{1}{2^k}\,.$$
Therefore for each $k\geq 1$,
$$\left|\int_\xo f\,d\muo-\limo \frac{\sum_{p\in X_i} f_i(p)}{|X_i|}\right|\leq
\frac{1}{2^{k-1}}\,.$$
Thus our lemma follows.
\qed
\end{proof} \vskip 0.2in

\begin{theorem}\label{tetel2}
For every measurable function $f:\xo\to[-d,d]$, there exists a sequence
of functions $f_i:X_i\to [-d,d]$ such that
the ultralimit of the sequence $\{f_i\}_{i=1}^\infty$ is
almost everywhere equals to $f$. That is any element of
$L^\infty(\xo,\bo,\muo)$ can be represented by an ultralimit function.
\end{theorem}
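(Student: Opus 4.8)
The plan is to build the representing sequence $\{f_i\}\iii$ by a diagonal argument over $\o$, after first disposing of simple functions and then learning how to read an almost-everywhere bound at the finite level. First I would note that indicators of measurable sets are already (a.e.) ultralimit functions: if $B\in\bo$ and $\widetilde B=[\{B_i\}\iii]\in\cP$ with $B\triangle\widetilde B\in\cN$, then the ultralimit of the characteristic functions $\chi_{B_i}$ is exactly $\chi_{\widetilde B}$, since $\limo\chi_{B_i}(p_i)=1$ precisely when $\{i:p_i\in B_i\}\in\o$; hence it equals $\chi_B$ almost everywhere. Because the ultralimit of a finite linear combination is the corresponding combination of ultralimits, every simple function is a.e.\ an ultralimit function. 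As $f$ is bounded and measurable I can then pick simple functions $g_n$ with $|f-g_n|\le 2^{-n}$ a.e., and write $g_n=\limo g_{n,i}$ with $g_{n,i}:X_i\to[-d,d]$.

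The technical heart is the following finite-level reading of an a.e.\ bound, which I would isolate as a small claim: if $h=\limo h_i$ and $|h|\le\e$ a.e., then $\limo\mu_i(\{p:|h_i(p)|>2\e\})=0$. Indeed, writing $A_i=\{|h_i|>2\e\}$, every $\overline p\in[\{A_i\}\iii]$ has $\limo|h_i(p_i)|\ge 2\e$, so $[\{A_i\}\iii]\sq\{|h|\ge 2\e\}$, which is a nullset; thus $[\{A_i\}\iii]\in\cP\cap\cN$ and therefore $\muo([\{A_i\}\iii])=\limo\mu_i(A_i)=0$. Applying this to $h=g_n-g_{n+1}$, for which $|h|\le 2^{-n+1}$ a.e., and putting $D_{n,i}=\{p:|g_{n,i}(p)-g_{n+1,i}(p)|>2^{-n+2}\}$, I obtain $\limo\mu_i(D_{n,i})=0$, so that
$$T_n:=\{i\in\bN:\mu_i(D_{n,i})<2^{-n}\}\in\o\,.$$

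Now comes the diagonalization. The sets $\cap_{n=1}^m T_n$ are nested and each lies in $\o$, so the (possibly everywhere finite) function $N(i):=\sup\{m:i\in\cap_{n=1}^m T_n\}$ satisfies $\{i:N(i)\ge m\}=\cap_{n=1}^m T_n\in\o$ for every $m$. I set $f_i:=g_{N(i),i}$, with $f_i:=0$ where $N(i)=0$. For each $i$ with $N(i)\ge m$, telescoping over $n=m,\dots,N(i)-1$ gives $|f_i-g_{m,i}|<\sum_{n\ge m}2^{-n+2}=2^{-m+3}$ off the set $\cup_{n=m}^{N(i)-1}D_{n,i}$, whose measure is $<\sum_{n\ge m}2^{-n}=2^{-m+1}$. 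Reading this back through $\cP$ (the relevant $G_{m,i}=X_i\setminus\cup_{n=m}^{N(i)-1}D_{n,i}$ have $\o$-limit measure $\ge 1-2^{-m+1}$), the ultralimit $f^\ast:=\limo f_i$ satisfies $|f^\ast-g_m|\le 2^{-m+3}$ on a set of measure $\ge 1-2^{-m+1}$.

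Finally I would combine this with $|g_m-f|\le 2^{-m}$ a.e.\ to get $\muo(\{|f^\ast-f|>2^{-m+4}\})\le 2^{-m+1}$ for every $m$, whence $\muo(\{|f^\ast-f|>t\})=0$ for each $t>0$ and therefore $f^\ast=f$ almost everywhere, as required. The step I expect to be the main obstacle is exactly this diagonalization: the countably many $\o$-large conditions $T_n$ cannot be imposed simultaneously on a single $\o$-large set, so one must let $N(i)$ escape to infinity along $\o$ and then turn the finite-level telescoping estimates back into a genuine almost-everywhere statement on $\xo$; keeping track of how the small exceptional sets $D_{n,i}$ accumulate under this passage is where the care is needed.
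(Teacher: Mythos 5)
Your proof is correct, and it shares the paper's skeleton---approximate $f$ by simple functions, represent the approximants at the finite level, and glue the representatives along a decreasing chain of $\o$-large index sets (the paper's own ``diagonalizing trick'')---but the engine driving the estimates is genuinely different. The paper first snaps the level sets of its step functions to actual elements of $\cP$: it chooses $B^k_n\in\cP$ with $\muo(A^k_n\triangle B^k_n)=0$ and arranges that the $B^k_{n,i}$ partition $X_i$ for $\o$-almost all $i$; after this replacement everything is exact on ultraproduct cells, so the diagonal step needs no measure estimates at all---on $B^1_{j_1}\cap\dots\cap B^k_{j_k}$ the glued ultralimit agrees with $f'$ to within $2^{-k+1}$ \emph{pointwise}, and the only conditions diagonalized over are the partition conditions $T_k$. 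You never pass to $\cP$-representatives of the level sets; instead you prove a transfer lemma---if $h=\limo h_i$ and $|h|\le\e$ a.e.\ then $\limo\mu_i(\{|h_i|>2\e\})=0$---and then control the accumulated exceptional sets $D_{n,i}$ by Chebyshev/Borel--Cantelli-style bookkeeping. Your lemma is correct (the containment $[\{A_i\}\iii]\subseteq\{|h|\ge 2\e\}$, together with the fact that a $\cP$-set contained in a nullset has $\muo$-measure zero, does the job), and it is of some independent interest as a quantitative transfer principle; the price is the extra layer of measure estimates where the paper's exact-representative trick gives pointwise control for free. One loose end in your write-up: $N(i)=\sup\{m: i\in\cap_{n\le m}T_n\}$ can be \emph{infinite} (the $T_n$ are $\o$-large, and nothing prevents $\cap_{n=1}^\infty T_n\neq\emptyset$), in which case $f_i=g_{N(i),i}$ is undefined; you only address the opposite possibility that $N$ is everywhere finite. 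Cap it, e.g.\ set $f_i:=g_{\min(N(i),i),i}$: since $\o$ is nonprincipal, $\{i:\min(N(i),i)\ge m\}\supseteq(\cap_{n\le m}T_n)\cap\{i\ge m\}\in\o$, and all your telescoping estimates survive verbatim. This is exactly the paper's move when it sets $s_i:=\sum_{j=1}^i(\cdots)$ for indices lying in every $T_k$, so it is a cosmetic fix rather than a gap.
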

\begin{proof}
Recall a standard result of measure theory. If $f$ is a bounded measurable
function on $\xo$, then there exists a sequence of bounded
stepfunctions $\{h_k\}^\infty_{k=1}$ such that
\begin{itemize}
\item
$f=\sum^\infty_{k=1} h_k$
\item  $|h_k|\leq \frac{1}{2^{k-1}}$, if $k>1$.
\item $h_k=\sum^{n_k}_{n=1} c^k_n \chi_{A^k_n}$, where 
$\cup^{n_k}_{n=1}A^k_n=\xo$ is a
measurable partition, $c^k_n\in\bR$ if $1\leq n \leq n_k$.
\end{itemize}
Now let $B^k_n\in\cP$ such that $\muo(A^k_n\triangle B^k_n)=0$.
We can suppose that $\cup^{n_k}_{n=1} B^k_n$ is a partition of $\xo$.
Let $h'_k=\sum^{n_k}_{n=1} c^k_n \chi_{B^k_n}$ and
$f'=\sum^\infty_{k=1} h'_k$.
Then clearly $f'=f$ almost everywhere. We show that $f'$ is an ultralimit
function.

\noindent
Let $B^k_n=[\{B^k_{n,i}\}^\infty_{i=1}]$. 
We set $T_k\subset \bN$ as the set of integers $i$ for which 
$\cup_{n=1}^{n_k} B^k_{n,i}$ is a partition of $X_i$. Then obviously,
$T_k\in\omega$.
Now we use our diagonalizing trick again. If $i\notin T_1$ let $s_i\equiv 0$.
If $i\in T_1, i\in T_2,\dots,i\in T_k, i\notin T_{k+1}$ then
define $s_i:=\sum^k_{j=1}(\sum^{n_j}_{n=1} c^j_n \chi_{B^j_{n,i}})\,.$
If $i\in T_k$ for each $k\geq 1$ then set
$s_i:=\sum^i_{j=1}(\sum^{n_j}_{n=1} c^i_n \chi_{B^j_{n,j}})\,.$
Now let $\overline{p}\in B^1_{j_1}\cap B^2_{j_2}\cap\dots\cap B^k_{j_k}$.
Then
$$|(\limo s_i)(\overline{p})-f'(\overline{p})|\leq\frac{1}{2^{k-1}}\,.$$
Since this inequality holds for each $k\geq 1$, $f'\equiv \limo s_i$. \qed
\end{proof} \vskip 0.2in
\subsection{Fubini's Theorem and the Integration Rule}
We fix a natural number $k$ and we denote by $[k]$ the set $\{1,2,\dots,k\}$.
 Let $X_{i,1},X_{i,2},\dots,X_{i,k}$ be $k$ copies of the finite set $X_i$ and
 for a subset
 $A\subseteq\{1,2,\dots,k\}$ let $X_{i,A}$ denote the direct
 product $\bigoplus_{j\in A}X_{i,j}$. 
Let $\xo^A$ denote the ultra product of the sets $X_{i,A}$, with a Boolean
algebra $\cP_A$.
 There is a natural
 map $p_A:\xok\to \xo^A$ (the projection). Let $\cB_A$ be the
$\sigma$-algebra of measurable subsets in $\xo^A$ as defined in the previous
sections. Define $\sigma(A)$ as $p_A^{-1}(\cB_A)$, the $\sigma$-algebra
of measurable sets depending only on the $A$-coordinates together with
the probability measure $\mu_A$. 
For a nonempty subset $A\subseteq [k]$ let $A^*$ denote the 
set system $\{B|B\subseteq
A~,~|B|=|A|-1\}$ and let $\sigma(A)^*$ denote the $\sigma$-algebra $\langle
\sigma(B)|B\in A^*\rangle$. An interesting fact is (as it will turn out in
subsection \ref{randpar}) that $\sigma(A)^*$ is strictly smaller 
than $\sigma(A)$. 

\begin{lemma}
\label{l14}
\label{vetites} Let $A,B\subseteq [k]$ and 
let $f:\xok\to\mathbb{R}$ be a bounded $\sigma(B)$-measurable ultralimit
 function.
 Then for all $y\in \xo^{\ac}$ the function 
$f_y$ is $\sigma(A\cap B)$-measurable, where $\ac$ denotes
the complement of $A$ in $[k]$ and $f_y(x)=f(x,y)$.
\end{lemma}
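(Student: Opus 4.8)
The plan is to avoid the almost-everywhere representation of Theorem \ref{tetel2} and instead to use $\sigma(B)$-measurability in its exact (not merely a.e.) form, so that the conclusion can be obtained genuinely for \emph{every} $y$. First I would record the automatic half. Writing $f=\limo f_i$ with $f_i:X_{i,[k]}\to[-d,d]$, fix $y=[\{y_i\}]\in\xo^{\ac}$; then the slice is again an ultralimit function,
$$f_y=\limo (f_i)_{y_i},\qquad (f_i)_{y_i}(x)=f_i(x,y_i),$$
so by Lemma \ref{ultralimit} the function $f_y$ on $\xo^A$ is $\cB_A$-measurable for every $y$. The real content is to upgrade this to measurability with respect to $\sigma(A\cap B)$, i.e.\ to the subalgebra $p_{A\cap B}^{-1}(\cB_{A\cap B})$ of $\cB_A$ obtained by pulling back along the projection of $\xo^A$ onto its $(A\cap B)$-coordinates.

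The key observation I would use is that exact $\sigma(B)$-measurability forces $f$ to be constant on the fibres of $p_B:\xok\to\xo^B$. Indeed, if $p_B(w)=p_B(w')$ but $f(w)\neq f(w')$, choose a Borel interval around $f(w)$ excluding $f(w')$; its $f$-preimage lies in $\sigma(B)=p_B^{-1}(\cB_B)$ and hence is a union of whole $p_B$-fibres, yet it contains $w$ but not $w'$, which lie in the same fibre -- a contradiction. Thus there is a well-defined $h:\xo^B\to[-d,d]$, given by $h(\overline b)=f(w)$ for any $w\in p_B^{-1}(\overline b)$, satisfying $f=h\circ p_B$ \emph{everywhere}.

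Next I would verify that $h$ is itself an ultralimit function. Picking for each $i$ an arbitrary section $c_i:X_{i,B}\to X_{i,[k]\setminus B}$ and setting $g_i(b)=f_i(b,c_i(b))$, fibre-constancy yields, for every $\overline b=[\{b_i\}]\in\xo^B$,
$$\limo g_i(b_i)=\limo f_i(b_i,c_i(b_i))=f\big([\{(b_i,c_i(b_i))\}]\big)=h(\overline b),$$
so $h=\limo g_i$. Now split $B=(A\cap B)\sqcup(\ac\cap B)$ and let $z=p_{\ac\cap B}(y)=[\{z_i\}]$. Since the $B$-coordinates of $(x,y)$ are the $(A\cap B)$-coordinates of $x$ together with $z$, we have $p_B(x,y)=(p_{A\cap B}(x),z)$ and therefore
$$f_y(x)=h\big(p_{A\cap B}(x),z\big)=h_z\big(p_{A\cap B}(x)\big),\qquad h_z(u)=h(u,z).$$
The slice $h_z=\limo (g_i)_{z_i}$ is an ultralimit function on $\xo^{A\cap B}$, hence $\cB_{A\cap B}$-measurable by Lemma \ref{ultralimit}, and consequently $f_y=h_z\circ p_{A\cap B}$ is $\sigma(A\cap B)$-measurable. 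Because both displayed identities hold pointwise rather than merely almost everywhere, this delivers the statement for all $y$.

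I expect the main obstacle to be precisely the passage from measure-theoretic $\sigma(B)$-measurability to the pointwise fibre-constancy that makes $f=h\circ p_B$ and $h=\limo g_i$ hold everywhere. It is the \emph{exactness} of the hypothesis (not an a.e.\ surrogate), together with the fact that slices of ultralimit functions are again ultralimit functions and hence measurable, that lets one conclude for every $y$ instead of for almost every $y$: an a.e.-only form of the hypothesis can genuinely fail on an individual bad slice -- as one sees from near-diagonal examples whose integral vanishes but whose fixed slice is a non-trivial indicator -- so the argument must remain inside the class of ultralimit functions throughout.
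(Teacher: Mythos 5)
Your proof is correct, but it takes a genuinely different (and in fact more careful) route than the paper's. The paper's own proof goes through Theorem \ref{tetel2}: since a $\sigma(B)$-measurable function can be approximated by $\sigma(B)$-measurable stepfunctions, the finite approximations $f_i$ produced there may be taken to depend only on the $B$-coordinates, and then $f_y=\limo f_i^{y_i}$, where each $f_i^{y_i}$ depends only on the $A\cap B$-coordinates, so the slice is an ultralimit function on $\xo^{A\cap B}$ and hence $\sigma(A\cap B)$-measurable. The subtlety there is precisely the one you flag at the end: the representation of Theorem \ref{tetel2} agrees with $f$ only almost everywhere, and a \emph{fixed} slice $f_y$ is not stable under an a.e.\ modification of $f$; so, read literally, the paper's argument establishes the conclusion for an a.e.-equal representative of $f$ (or implicitly assumes the given representatives $f_i$ already depend only on the $B$-coordinates), which is all that is needed in the downstream applications (Fubini and the Integration Rule, where one passes to ultralimit representatives anyway). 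Your route --- exact $\sigma(B)$-measurability forces constancy on the fibres of $p_B$ because every set in $p_B^{-1}(\cB_B)$ is a union of whole fibres, yielding the pointwise factorization $f=h\circ p_B$; the section trick $g_i(b)=f_i(b,c_i(b))$ shows $h$ is again an ultralimit function; and then $f_y=h_z\circ p_{A\cap B}$ with $h_z$ an ultralimit slice --- never modifies $f$ and so genuinely delivers the ``for all $y$'' conclusion for $f$ itself, as the lemma states. What the paper's approach buys is brevity; what yours buys is exactness. Two minor points worth a line if you write this up: $h$ is defined on all of $\xo^B$ because $p_B$ is surjective (every fibre of an ultraproduct projection is nonempty), and the degenerate case $A\cap B=\emptyset$ just says $f_y$ is constant, which your argument also covers.
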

\begin{proof}
Let $f:\xok\to\bR$ be a $\sigma(B)$-measurable ultralimit
function. It is easy to
see that the finite approximation functions $f_i:X_{i,1}\times
X_{i,2}\times\dots X_{i,k}$ constructed in Theorem \ref{tetel2} depend only
on the $B$-coordinates, since $\sigma(B)$-measurable
functions can be approximated by $\sigma(B)$-measurable stepfunctions. 
Let $y\in\xo^{\ac}$,
$y=[\{y_i\}^\infty_{i=1}]$. Then $f_y$ is the ultralimit of 
the functions $f^{y_i}_i$. Clearly $f^{y_i}_i$ depends only on the $A\cap
B$-coordinates, thus the ultralimit $f_y$ is $\sigma(A\cap B)$-measurable.
\qed \end{proof} \vskip 0.2in
\begin{theorem}[Fubini's Theorem] Let $A\subseteq [k]$ and 
let $f:X_k\to\mathbb{R}$ be a $\sigma([k])$-measurable ultralimit function.
 Then 
$$\int_{\xok}f(p) d\mu_{[k]}(p)=\int_{\xo^{\ac}}
\left(\int_{\xo^A}f_y(x) d\mu_A(x)
\right) d\mu_{\ac} (y)$$
\end{theorem}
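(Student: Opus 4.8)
\smallskip\noindent\textbf{Proof plan.} The plan is to reduce every integral occurring in the statement to an ultralimit of finite averages by means of Lemma \ref{ultralimit}, so that the asserted identity becomes nothing more than the elementary finite Fubini rearrangement of a double sum, carried across $\limo$. Since $f$ is a $\sigma([k])$-measurable ultralimit function, I would first fix a representing sequence $f_i:X_{i,[k]}\to[-d,d]$ with $f(\overline p)=\limo f_i(p_i)$ for every $\overline p=[\{p_i\}_{i=1}^\infty]$. By Lemma \ref{ultralimit} the left-hand side equals
$$\int_{\xok}f\,d\mu_{[k]}=\limo\frac{1}{|X_{i,[k]}|}\sum_{p\in X_{i,[k]}}f_i(p).$$

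Next I would evaluate the inner integral pointwise in $y$. Fix $y=[\{y_i\}_{i=1}^\infty]\in\xo^{\ac}$ and write $f_i^{y_i}(x)=f_i(x,y_i)$. Because $f=\limo f_i$ pointwise, the slice satisfies $f_y(x)=\limo f_i^{y_i}(x_i)$, so by Lemma \ref{l14} (applied with $B=[k]$, whence $A\cap B=A$) the function $f_y$ is a $\sigma(A)$-measurable ultralimit function on $\xo^A$ represented by $\{f_i^{y_i}\}$. A second application of Lemma \ref{ultralimit}, now on $\xo^A$, then gives
$$\int_{\xo^A}f_y(x)\,d\mu_A(x)=\limo\frac{1}{|X_{i,A}|}\sum_{x\in X_{i,A}}f_i(x,y_i).$$
Setting $g_i:X_{i,\ac}\to[-d,d]$, $g_i(y)=\frac{1}{|X_{i,A}|}\sum_{x\in X_{i,A}}f_i(x,y)$, the displayed line says precisely that the inner-integral function $g:y\mapsto\int_{\xo^A}f_y\,d\mu_A$ is exactly the ultralimit function associated to the sequence $\{g_i\}_{i=1}^\infty$.

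Finally I would integrate $g$ over $\xo^{\ac}$. A third application of Lemma \ref{ultralimit} yields $\int_{\xo^{\ac}}g\,d\mu_{\ac}=\limo\frac{1}{|X_{i,\ac}|}\sum_{y\in X_{i,\ac}}g_i(y)$. Since $X_{i,[k]}=X_{i,A}\times X_{i,\ac}$, for each fixed $i$ the right-hand average is literally the same finite number as in the first display:
$$\frac{1}{|X_{i,\ac}|}\sum_{y\in X_{i,\ac}}\frac{1}{|X_{i,A}|}\sum_{x\in X_{i,A}}f_i(x,y)=\frac{1}{|X_{i,[k]}|}\sum_{p\in X_{i,[k]}}f_i(p).$$
Taking $\limo$ of this finite identity and comparing the three displays closes the argument.

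The one point that needs care, and which I regard as the only real content, is the passage from $f$ to its slices: one must know that $f_y$ is again a genuine ultralimit function (so that Lemma \ref{ultralimit} applies to it) and that it is represented by $f_i^{y_i}$ for \emph{every} $y$. This is exactly what Lemma \ref{l14} supplies, and because ultralimit functions are defined through an honest pointwise ultralimit there is no almost-everywhere ambiguity to track — all the identities above hold for all $y$, not merely for $\mu_{\ac}$-almost every $y$. Everything else is the finite Fubini rearrangement together with the linearity and boundedness of $\limo$, both of which are routine.
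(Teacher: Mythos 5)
Your proof is correct and takes essentially the same route as the paper's: the authors likewise fix a representing sequence, define the averaged functions $\overline{f_i}(y)=|X_{i,A}|^{-1}\sum_{x\in X_{i,A}}f_i(x,y)$ (your $g_i$), apply Lemma \ref{ultralimit} once to the slices and once to the $\overline{f_i}$, and finish with the finite exchange-of-summation identity. Your explicit appeal to Lemma \ref{l14} merely spells out the slice-representation step ($f_y=\limo f_i^{y_i}$ for every $y$) that the paper uses implicitly.
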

 
\begin{proof} 
Let $f$ be the ultralimit of $\{f_i:X_{i,[k]}\to\mathbb{R}\}^\infty_{i=1}$. 
Define the functions $\overline{f_i}:X_{i,\ac}\to [-d,d]$ by
$$\overline{f_i}(y)=|X_{i,A}|^{-1}\sum_{x\in X_{i,A}}f_i(x,y).$$
By Lemma \ref{ultralimit} 
$$\limo \overline{f_i}(y)=\int_{\xo^A} f(x,y) \,d\mu_A(x)\,.$$ 
Applying Lemma \ref{ultralimit}
again for the functions $\overline{f_i}$, we obtain that
$$\limo |X_{i,\ac}|^{-1}\sum_{y\in X_{i,\ac}}
\overline{f_i}(y)
=\int_{\xo^{\ac}}\left(\int_{\xo^A}f(x,y) d\mu_A(x)\right) d\mu_{\ac}(y)\,.$$
This completes the proof, since
$$|X_{i,\ac}|^{-1}\sum_{y\in X_{i,\ac}}
\overline{f_i}(y)=\frac{\sum_{p\in X_i} f_i(p)}{|X_i|}\,.$$
\qed \end{proof} \vskip 0.2in 
Recall that if $\cB\subset\cA$ are $\sigma$-algebras on $X$ with  a measure
$\mu$
 and
$g$ is an $\cA$-measurable function on $X$, then $E(g\mid \cB)$ is 
the $\cB$-measurable function (unique up to a zero measure perturbation)
 with the property
that
$$\int_Y E(g\mid \cB)\,d\mu=\int_Y g \,d\mu\,,$$
for any $Y\in\cB$ (see Appendix).
\begin{theorem}[Integration Rule]
Let $g_i:\xok\to\mathbb{R}$ be bounded 
$\sigma(A_i)$-measurable functions for $i=1,2,\dots,m$. Let $B$ denote the
sigma algebra generated by 
$\sigma(A_1\cap A_2),\sigma(A_1\cap A_3),\dots,\sigma(A_1\cap A_m)$. Then
$$\int_{\xok} g_1g_2\dots g_m\,d\mu_{[k]}=
\int_{\xok} E(g_1|B)g_2g_3\dots g_m\,d\mu_{[k]}\,.$$
\end{theorem}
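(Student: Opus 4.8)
The plan is to integrate out the coordinates outside $A_1$ first, via Fubini's Theorem, and then to recognize the inner integral over $\xo^{A_1}$ as the defining relation for the conditional expectation $E(g_1\mid B)$. By Theorem \ref{tetel2} we may assume each $g_i$ is a bounded ultralimit function, so that Fubini's Theorem and Lemma \ref{vetites} are both available; since every generator $\sigma(A_1\cap A_i)$ is contained in $\sigma(A_1)$, we have $B\subseteq\sigma(A_1)$, and in particular both $g_1$ and $E(g_1\mid B)$ are $\sigma(A_1)$-measurable.

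First I would apply Fubini's Theorem with $A=A_1$ to obtain
$$\int_{\xok} g_1 g_2\cdots g_m\, d\mu_{[k]}=\int_{\xo^{A_1^c}}\left(\int_{\xo^{A_1}} g_1(x)\,(g_2)_y(x)\cdots (g_m)_y(x)\, d\mu_{A_1}(x)\right)d\mu_{A_1^c}(y),$$
where I used that $g_1$, being $\sigma(A_1)$-measurable, factors through $p_{A_1}$ and so $(g_1)_y=g_1$ is independent of $y$. For fixed $y\in\xo^{A_1^c}$, Lemma \ref{vetites}, applied to $g_i$ with $A_1$ in place of $A$ and $A_i$ in place of the lemma's set $B$, shows that each slice $(g_i)_y$, $2\le i\le m$, is $\sigma(A_1\cap A_i)$-measurable on $\xo^{A_1}$. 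Consequently the product $h_y:=(g_2)_y\cdots(g_m)_y$ is measurable with respect to the restriction of $B=\langle\sigma(A_1\cap A_2),\dots,\sigma(A_1\cap A_m)\rangle$ to $\xo^{A_1}$.

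Next I would invoke the conditional expectation on $\xo^{A_1}$. Extending the defining identity $\int_Y E(g_1\mid B)\,d\mu=\int_Y g_1\,d\mu$ (for $Y\in B$) from indicators to the bounded $B$-measurable function $h_y$ by the usual approximation by simple functions gives
$$\int_{\xo^{A_1}} g_1\,h_y\, d\mu_{A_1}=\int_{\xo^{A_1}} E(g_1\mid B)\,h_y\, d\mu_{A_1}.$$
Here I also use that, because both $g_1$ and every set of $B$ depend only on the $A_1$-coordinates, the conditional expectation $E(g_1\mid B)$ formed on $\xok$ agrees with the one computed fiberwise on $\xo^{A_1}$. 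Substituting this back and reassembling the iterated integral by Fubini's Theorem, now applied to $E(g_1\mid B)g_2\cdots g_m$ (which is again a.e. an ultralimit function by Theorem \ref{tetel2}), yields $\int_{\xok}E(g_1\mid B)g_2\cdots g_m\, d\mu_{[k]}$, as claimed.

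The step I expect to be the main obstacle is the bookkeeping for the conditional expectation: verifying that the single $B$-conditional expectation formed on $\xok$ can legitimately be computed one fiber at a time, and that the elementary defining relation against indicators extends to the product $h_y$. Both become routine once one notes $B\subseteq\sigma(A_1)$, so that nothing in sight depends on the integrated-out variable $y$. The genuinely essential input from the paper is Lemma \ref{vetites}: it is precisely what forces the restricted product $h_y$ into the small $\sigma$-algebra $B$, rather than merely into the larger $\sigma$-algebra generated by $\sigma(A_2),\dots,\sigma(A_m)$, and this is what makes the replacement of $g_1$ by $E(g_1\mid B)$ possible.
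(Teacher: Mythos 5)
Your proposal is correct and takes essentially the same route as the paper's own proof: apply Fubini's Theorem with respect to $A_1$, use Lemma \ref{vetites} to show the slice product $(g_2)_y\cdots(g_m)_y$ is $B$-measurable for each fixed $y\in\xo^{A_1^c}$, replace $g_1$ by $E(g_1\mid B)$ in the inner integral, and reassemble. Your additional bookkeeping --- noting $B\subseteq\sigma(A_1)$, justifying the fiberwise computation of the conditional expectation, and checking that $E(g_1\mid B)g_2\cdots g_m$ is again a.e.\ an ultralimit function so Fubini applies in reverse --- merely makes explicit what the paper leaves implicit (the paper's version of this is its remark that $E(g_1\mid B)$ may be chosen not to depend on the $A_1^c$-coordinates).
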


\begin{proof} We may suppose that
all $g_i$ are ultralimit functions, since the conditional
expectation does not depend on zero measure perturbation.
Since $g_1$ does not depends on the $A_1^c$ coordinates
we may suppose that $E(g_1\mid B)$ does not depend on the
$A_1^c$-coordinates as well.
 By the previous theorem,
$$\int_{\xok} g_1g_2g_3\dots 
g_m\,d\mu_{[k]}=\int_{\xo^{A_1^c}}\left(\int_{\xo^{A_1}} 
g_1(x) g_2(x,y)\dots g_m(x,y)\,d\mu_{A_1}(x)\right)d\mu_{A_1^c}(y)\, .$$
Now we obtain by Lemma \ref{vetites} that for all $y\in X_{{A_1}^c}$ the
function $$x\to g_2(x,y)g_3(x,y)\dots g_m(x,y)~~(x\in X_{A_1})$$ is 
$B$-measurable.
This means that
$$\int_{\xo^{A_1}}g_1(x)g_2(x,y)\dots g_m(x,y) d\mu_{A_1}(x)=$$
$$=\int_{\xo^{A_1}}E(g_1|B)(x)g_2(x,y)g_3(x,y)\dots g_m(x,y) d\mu_{A_1}(x)$$ 
for all $y$ in $\xo_{A_1^c}$. 
This completes the proof. \qed \end{proof} \vskip 0.2in

\begin{lemma}[Total Independence]\label{totalindep} Let $A_1,A_2,\dots A_r$ be
  the list of nonempty
  subsets of $[k]$, and let $S_1,S_2,\dots,S_r$ be subsets of $\xok$ such
  that $S_i\in\sigma(A_i)$ and $E(S_i|\sigma(A_i)^*)$ is a constant 
function for
  every $1\leq i\leq r$. Then 
$$\mu(S_1\cap S_2\cap\dots\cap S_r)=\mu(S_1)\mu(S_2)\dots\mu(S_r).$$
\end{lemma}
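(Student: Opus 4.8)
The plan is to expand $\mu(S_1\cap\cdots\cap S_r)=\int_{\xok}\chi_{S_1}\chi_{S_2}\cdots\chi_{S_r}\,d\mu$ and to peel off the factors one at a time by repeated use of the Integration Rule, each peeling producing a factor $\mu(S_i)$. Writing $g_i=\chi_{S_i}$, the $g_i$ are bounded $\sigma(A_i)$-measurable functions, so the Integration Rule applies directly. First I would relabel the subsets so that $|A_1|\ge|A_2|\ge\cdots\ge|A_r|$. The only feature of this ordering I use is that no $A_t$ is contained in a later $A_l$: if $A_t\subseteq A_l$ with $l>t$, then $|A_l|\ge|A_t|$ forces $A_l=A_t$, contradicting distinctness.

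At the $t$-th step I treat $g_t$ as the distinguished function in the Integration Rule applied to the remaining product $g_tg_{t+1}\cdots g_r$. The rule replaces $g_t$ by $E(g_t\mid B_t)$, where $B_t$ is the $\sigma$-algebra generated by $\sigma(A_t\cap A_l)$ for $l>t$. The key combinatorial observation is that $B_t\subseteq\sigma(A_t)^*$. Indeed, since no later $A_l$ contains $A_t$, each intersection $A_t\cap A_l$ is a \emph{proper} subset of $A_t$, hence is contained in some $(|A_t|-1)$-element subset $D\subseteq A_t$; because a set depending only on the $(A_t\cap A_l)$-coordinates also depends only on the $D$-coordinates, monotonicity of $\sigma$ gives $\sigma(A_t\cap A_l)\subseteq\sigma(D)\subseteq\sigma(A_t)^*$.

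Now I invoke the hypothesis together with the tower property of conditional expectation. By assumption $E(g_t\mid\sigma(A_t)^*)$ is constant, and integrating over $\xok$ shows that this constant equals $\int_{\xok}g_t\,d\mu=\mu(S_t)$. Since $B_t\subseteq\sigma(A_t)^*$, the tower property yields $E(g_t\mid B_t)=E(E(g_t\mid\sigma(A_t)^*)\mid B_t)=\mu(S_t)$, again a constant. Hence the Integration Rule gives $\int g_t\cdots g_r\,d\mu=\mu(S_t)\int g_{t+1}\cdots g_r\,d\mu$. Iterating this from $t=1$ to $t=r-1$ and using $\int g_r\,d\mu=\mu(S_r)$ at the final step produces $\mu(S_1)\mu(S_2)\cdots\mu(S_r)$, which is the desired identity.

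The measurability bookkeeping and the reduction to ultralimit functions are routine and are already absorbed into the Integration Rule. The one point that carries the real content — and where I expect the argument to break if handled carelessly — is the order of elimination: the factorization only works if each $A_t$ is peeled off \emph{before} all of its proper subsets, so that the auxiliary algebra $B_t$ generated by the Integration Rule lands inside $\sigma(A_t)^*$ rather than containing $\sigma(A_t)$ itself. Securing $B_t\subseteq\sigma(A_t)^*$ is precisely what allows the quasirandomness hypothesis $E(g_t\mid\sigma(A_t)^*)=\mathrm{const}$ to propagate through the tower property; with the wrong order one would encounter $A_t\cap A_l=A_t$, forcing $E(g_t\mid B_t)=g_t$ and collapsing the induction.
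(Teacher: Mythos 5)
Your proof is correct and is essentially the paper's own argument: the paper likewise orders the sets so that $|A_i|\geq|A_j|$ whenever $j>i$ and peels off one factor at a time via the Integration Rule, writing $\int_{\xok}\chi_i\chi_{i+1}\cdots\chi_r\,d\mu_{[k]}=\int_{\xok}E(\chi_i\mid\sigma(A_i)^*)\chi_{i+1}\cdots\chi_r\,d\mu_{[k]}=\mu(S_i)\int_{\xok}\chi_{i+1}\cdots\chi_r\,d\mu_{[k]}$. Your explicit check that the algebra $B_t$ produced by the Integration Rule satisfies $B_t\subseteq\sigma(A_t)^*$, together with the tower-property step identifying $E(g_t\mid B_t)$ with the constant $\mu(S_t)$, just spells out what the paper leaves implicit in replacing $B_t$ by $\sigma(A_i)^*$.
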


\begin{proof} We can assume that $|A_i|\geq |A_j|$ whenever $j>i$. Let
  $\chi_i$ be the characteristic function of $S_i$. We have that
$$\mu(S_1\cap S_2\cap\dots\cap S_r)=\int_{\xok}\chi_1\chi_2\dots\chi_{r}
d\mu_{[k]}\,.$$
The integration rule shows that
$$\int_{\xok}\chi_i\chi_{i+1}\dots\chi_r\,d\mu_{[k]}=
\int_{\xok}E(\chi_i|\sigma(A_i)^*)\chi_{i+1}\dots\chi_r\,d\mu_{[k]}$$
$$=
\mu(S_i)\int_{\xok}
\chi_{i+1}\chi_{i+2}\dots\chi_r\,d\mu_{[k]}.$$
This completes the proof. \qed
\end{proof}

\subsection{Random Partitions}\label{randpar}
The goal of this section is to prove
the following proposition.
\begin{proposition}
\label{random}
Let $A\subset [k]$ be a subset, then for any $n\geq 1$ there exists
a partition $X_A=S_1\cup S_2\cup\dots\cup S_n$, such that
$E(S_i\mid \sigma(A)^*)=\frac{1}{n}$.
\end{proposition}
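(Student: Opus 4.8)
The plan is to construct the partition at the finite level and take ultralimits, exploiting the fact that the condition $E(S_i \mid \sigma(A)^*) = \frac{1}{n}$ is precisely the statement that $S_i$ is ``equidistributed'' relative to all the proper sub-coordinate algebras $\sigma(B)$ with $B \in A^*$. Concretely, I would first recall that by Lemma \ref{vetites} and the Integration Rule, a $\sigma(A)$-measurable set $S$ satisfies $E(S \mid \sigma(A)^*) = c$ (constant) if and only if, for every $\sigma(A)^*$-measurable set $T$, we have $\mu(S \cap T) = c\,\mu(T)$; so the target is a set meeting every ``lower-order'' cylinder in exactly proportion $\frac{1}{n}$. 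The natural way to produce such a set is to colour each point of the finite space $X_{i,A}$ independently and uniformly at random with one of $n$ colours, and let $S_j$ be the ultralimit of the $j$-th colour classes.

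First I would fix $i$ and define a random function $c_i : X_{i,A} \to \{1,\dots,n\}$ by choosing $c_i(x)$ independently and uniformly for each $x$, setting $S^i_j = c_i^{-1}(j)$. Then I would pass to ultralimits: let $S_j = [\{S^i_j\}_{i=1}^\infty] \in \cP_A$, pulled back to $\sigma(A) \subseteq \xok$ via $p_A^{-1}$. Plainly $\mu(S_j) = \limo \frac{|S^i_j|}{|X_{i,A}|} = \frac{1}{n}$ by the law of large numbers applied fibrewise, and the $S_j$ partition $X_A$. The content is showing the conditional-expectation identity. For this I would use the characterisation above: it suffices to check that for every $B \in A^*$ and every $\sigma(B)$-measurable ultralimit set $T = [\{T_i\}]$, one has $\mu(S_j \cap T) = \frac{1}{n}\mu(T)$. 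By Lemma \ref{ultralimit} this reduces to estimating, at finite level, the quantity $\frac{|S^i_j \cap T_i|}{|X_{i,A}|}$ and showing it concentrates around $\frac{1}{n}\cdot\frac{|T_i|}{|X_{i,A}|}$.

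The main obstacle is that one cannot literally check the identity against \emph{every} $\sigma(A)^*$-measurable set, since there are continuum-many and the random choice is made before $T$ is revealed; so I must be careful about the order of quantifiers. The clean way around this is to observe that it is enough to test against sets $T$ coming from the generating subalgebras $\sigma(B)$, $B \in A^*$, and in fact against cylinder sets defined by the $B$-coordinates at finite level. A $\sigma(B)$-measurable finite set $T_i$ is a union of fibres of the projection $X_{i,A} \to X_{i,B}$; since $B \subsetneq A$, each such fibre has size tending to infinity (it is a product over the coordinates in $A \setminus B$, each of size $|X_i| \to \infty$). On each fibre the colours $c_i(x)$ are i.i.d.\ uniform, so by a Chernoff/Hoeffding bound the fraction coloured $j$ inside the fibre is $\frac{1}{n} + o(1)$ with overwhelming probability; summing over the boundedly-structured family of fibres and taking the ultralimit kills the error.

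To make the quantifier issue rigorous I would phrase the random construction so that a \emph{single} choice works for all test sets simultaneously. The point is that $\sigma(B)$ for $B \in A^*$ is generated by the individual coordinate algebras, and the decisive estimate is uniform equidistribution over all fibres of $X_{i,A} \to X_{i,B}$ at once; a union bound over the $n$ colours and the finitely many relevant $B$ shows that, with probability tending to $1$ as $i \to \infty$, the colouring $c_i$ is simultaneously $\frac{1}{n}$-equidistributed on every such fibre up to error $\epsilon_i \to 0$. Selecting, for each $i$, one deterministic colouring $c_i$ with this property (which exists since the good event has positive probability) yields explicit sets $S^i_j$ whose ultralimits $S_j$ satisfy $\mu(S_j \cap T) = \frac{1}{n}\mu(T)$ for all $\sigma(A)^*$-measurable $T$, hence $E(S_j \mid \sigma(A)^*) = \frac{1}{n}$, completing the proof.
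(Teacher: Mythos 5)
There is a genuine gap, and it sits exactly at the step you flagged as the ``quantifier issue'': your claimed reduction that ``it suffices to test against sets $T$ coming from the generating subalgebras $\sigma(B)$, $B\in A^*$'' is false. The family of sets $T$ satisfying $\mu(S_j\cap T)=\frac{1}{n}\mu(T)$ is a $\lambda$-system (Dynkin system), so it is enough to verify the identity on a $\pi$-system generating $\sigma(A)^*$; but the union $\cup_{B\in A^*}\sigma(B)$ is \emph{not} closed under intersections, and checking each $\sigma(B)$ separately only yields $E(S_j\mid\sigma(B))=\frac{1}{n}$ for each individual $B$, not $E(S_j\mid\sigma(A)^*)=\frac{1}{n}$. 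Your per-fibre Chernoff argument inherits this weakness: it controls unions of fibres of a single projection $p_B$, but not intersections of such sets across different $B$. Concretely, take $|A|=2$, $n=2$, split the rows of $X_i\times X_i$ into halves $R_1,R_2$ and the columns into halves $C_1,C_2$, and colour $(x,y)$ with colour $1$ on $(R_1\times C_1)\cup(R_2\times C_2)$ and colour $2$ elsewhere. This colouring is \emph{perfectly} equidistributed on every row and every column (every fibre of both projections), yet it is constant on the rectangle $R_1\times C_1$, which occupies a quarter of the space. So simultaneous equidistribution on all fibres of all $p_B$, $B\in A^*$ --- which is all your union bound delivers, over only polynomially many test sets --- does not give $\mu(S_j\cap T)=\frac{1}{n}\mu(T)$ for the intersection sets $T=\cap_{B\in A^*}T_B$, which are precisely the sets one must control to condition on the join $\sigma(A)^*$.

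The paper's proof is your random colouring, but with the test family repaired: it verifies equidistribution directly against every \emph{cylindric intersection set} $T=\cap_{C\subsetneq A}T_C$ with $|T|\geq\epsilon|X_{i,A}|$, and these do form a $\pi$-system generating $\sigma(A)^*$ (after passing to finite disjoint unions and a density argument). The point is a counting one: there are at most $2^{(|X_i|^{|A|-1})2^k}$ such sets in $X_{i,A}$, while Chernoff gives failure probability at most $2\exp(-c_\epsilon\epsilon|X_i|^{|A|})$ for each one; since $|X_i|^{|A|}$ dominates $|X_i|^{|A|-1}$, the union bound over this \emph{exponentially large} family still tends to $0$, and Borel--Cantelli then produces a single $f$ that is good for all but finitely many $i$ and all $n$ colours at once. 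Your construction, ultralimit passage, and handling of the order of quantifiers are otherwise sound and essentially identical to the paper's; the fix is to replace your fibre family by the full family of cylindric intersection sets, where the exponential decay of Chernoff in $|T|\geq\epsilon|X_i|^{|A|}$ (rather than merely in $|X_i|$) is exactly what is needed to absorb the doubly large union bound.
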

\begin{proof}
The idea of the proof is that we consider random partitions of $X_A$ and show
that by probability one these partitions shall satisfy the property of our
proposition.
Let $\Omega=\prod^\infty_{i=1}\{1,2,\dots,n\}^{X_{i,A}}$ be the
set of $\{1,2,\dots,n\}$-valued functions on $\cup^\infty_{i=1} X_{i,A}$.
Each element $f$ of $\Omega$ defines a partition of $X_A$ the following
way. Let
$$ S_f^{i,j}=\{p\in X_{i,A}\,\mid f(p)=j\}\,\,\,1\leq j \leq n,\, i\geq 1\,.$$
$$[\{S^{i,j}_f\}^\infty_{i=1}]= S^j_f\,.$$
Then $X_A=S^1_f\cup S^2_f\cup\dots \cup S^n_f$ is our partition induced by $f$.

\noindent
Note that on $\Omega$ one has the usual Bernoulli probability measure $P$,
$$P(T_{p_1,p_2,\dots,p_r}(i_1,i_2,\dots,i_r))=\frac{1}{n^r}\,,$$
where 
$$T_{p_1,p_2,\dots,p_r}(i_1,i_2,\dots,i_r)=\{f\in\Omega\,\mid\, f(p_s)=i_s\,
\,1\leq s \leq r\}\,.$$
A {\bf cylindric intersection set} $T$ in $X_{i,A}$
is a set $T=\cap_{C,C\subsetneq A} T_C$, where  $T_C\subset X_{i,C}$.
First of all note that the number of different dylindric intersection
sets in $X_{i,A}$ is not greater than
$$\prod_{C,C\subsetneq A} 2^{|X_{i,C}|}\leq 2^{(|X_i|^{A-1})2^k}\,.$$
Let $0\leq \e \leq\frac{1}{10n}$ be a real number and $T$ be a cylindric
intersection set of elements at least $\e |X_{i,A}|\,.$ By the
Chernoff-inequality the probability that an $f\in \Omega$ takes the
value $1$ more than $(\frac{1}{n}+\e)|T|$-times or less than
$(\frac{1}{n}-\e)|T|$-times on the set $T$ is less than
$2\exp(- c_{\e}|T|)$, where the positive constant $c_{\e}$ depends only
on $\e$. Therefore the probability that there exists a cylindric intersection
set $T\subset X_{i,A}$ of size at least $\e |X_{i,A}|$ for which 
$f\in \Omega$ takes the
value $1$ more than $(\frac{1}{n}+\e)|T|$-times or less than
$(\frac{1}{n}-\e)|T|$-times on the set $T$ is less than
$$2^{(|X_i|^{A-1})2^k} 2 \exp(-c_{\e}\e |X_i|^A)\,.$$
Since $|X_1|<|X_2|<\dots$
 by the Borel-Cantelli lemma we have the following lemma.
\begin{lemma}
For almost all $f\in\Omega$ there exist only finitely many $i$
such that there exists at least one cylindric intersection set
$T\subset X_{i,A}$ for which $f\in \Omega$ takes the
value $1$ more than $(\frac{1}{n}+\e)|T|$-times or less than
$(\frac{1}{n}-\e)|T|$-times on the set $T$.
\end{lemma}
Now let us consider a cylindric intersection set $Z\subseteq X_{A}$,
$Z=\cap_{C,C\subsetneq A} Z_C, \, Z_C\in X_C$.
By the previous lemma, for almost all $f\in \Omega$,
$$\mu(S^1_f\cap Z)=\frac{1}{n}\mu(Z)\,.$$
Therefore for almost all $f\in\Omega$:
$$\mu(S^1_f\cap Z')=\frac{1}{n}(\mu(Z'))\,,$$
where $Z'$ is a finite disjoint union of cylindric intersection sets
in $\xo_{A}$. Consequently, for almost all $f\in\Omega$,
$$\mu(S^1_f\cap Y)=\frac{1}{n}(\mu(Y))\,,$$
where $Y\in \sigma(A)^*$. This shows immediately that
$E(S^1_f\mid \sigma(A)^*)=\frac{1}{n}$ for almost all $f\in\Omega$. Similarly,
$E(S^i_f\mid \sigma(A)^*)=\frac{1}{n}$ for almost all $f\in\Omega$, thus our 
proposition follows.
\qed \end{proof} \vskip 0.2in

\subsection{Independent Complement in Separable $\sigma$-algebras}

Let $\mathcal{A}$ be a separable $\sigma$-algebra on a set $X$, and
let $\mu$ be a probability measure on $\mathcal{A}$. Two sub
$\sigma$-algebras $\mathcal{B}$ and $\mathcal{C}$ are called
independent if $\mu(B\cap C)=\mu(B)\mu(C)$ for every
$B\in\mathcal{B}$ and $C\in \mathcal{C}$. We say that $\mathcal{C}$
is an {\it independent complement} of $\mathcal{B}$ in $\mathcal{A}$
if it is independent from $\mathcal{B}$ and
$\langle\mathcal{B},\mathcal{C}\rangle$ is dense in $\mathcal{A}$.

\begin{definition} Let $\mathcal{A}\geq\mathcal{B}$ be two
  $\sigma$-algebras on a set $X$ and let $\mu$ be a probability measure on
  $\mathcal{A}$.
 A $\mathcal{B}$-random $k$-partition in $\mathcal{A}$ is
  a partition $A_1,A_2,\dots,A_k$ of $X$ into $\mathcal{A}$-measurable sets
  such that $E(A_i|\mathcal{B})=1/k$ for every $i=1,2,\dots,k$.
\end{definition}

\begin{theorem}[Independent Complement]\label{incom}
 Let $\mathcal{A}\geq\mathcal{B}$ be two separable $\sigma$-algebras on a set
  $X$ and let $\mu$ be a probability measure on $\mathcal{A}$. Assume that for
  every natural number $k$ there exists a $\mathcal{B}$-random $k$-partition
  $\{A_{1,k},A_{2,k},\dots,A_{k,k}\}$ in $\mathcal{A}$. Then there is
  an independent complement $\mathcal{C}$ of $\mathcal{B}$ in $\mathcal{A}$.
 (Note that this is basically the Maharam-lemma, see \cite{Mah})
\end{theorem}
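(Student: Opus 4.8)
The plan is to realize $\mathcal{A}$ over $\mathcal{B}$ by a disintegration, use the hypothesis to force the fibres to be non-atomic, and then invoke a relative isomorphism theorem to manufacture the complement. First I would pass to the measure algebra: since $\mathcal{A}$ is separable, $(X,\mathcal{A},\mu)$ is, modulo null sets, a standard probability space and $\mathcal{B}$ a sub-$\sigma$-algebra. Let $(Y,\nu)$ be the standard space representing $\mathcal{B}$ and $\pi:X\to Y$ the associated (a.e.\ defined) factor map. By Rokhlin's disintegration theorem there is a measurable family $\{\mu_y\}_{y\in Y}$ of probability measures, with $\mu_y$ concentrated on $\pi^{-1}(y)$, such that $\mu=\int_Y\mu_y\,d\nu(y)$ and, for every $A\in\mathcal{A}$, $E(\chi_A\mid\mathcal{B})(x)=\mu_{\pi(x)}(A)$ for a.e.\ $x$.

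The key step is to deduce from the existence of $\mathcal{B}$-random $k$-partitions for all $k$ that $\mu_y$ is non-atomic for $\nu$-a.e.\ $y$. Fix $k$ and the partition $\{A_{1,k},\dots,A_{k,k}\}$ with $E(\chi_{A_{i,k}}\mid\mathcal{B})=1/k$. For a.e.\ $y$ this reads $\mu_y(A_{i,k})=1/k$ for every $i$, while the $A_{i,k}$ still partition the fibre $\pi^{-1}(y)$. If $\mu_y$ had an atom $\{p\}$ of mass $a$, that point would lie in exactly one part $A_{i,k}$, forcing $1/k=\mu_y(A_{i,k})\ge a$; hence every atom of $\mu_y$ has mass at most $1/k$. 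Intersecting the countably many conull sets (one for each pair $(i,k)$) on which these identities hold, I conclude that for $\nu$-a.e.\ $y$ every atom of $\mu_y$ has mass $\le 1/k$ for all $k$, i.e.\ $\mu_y$ is non-atomic.

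With non-atomic fibres in hand, a.e.\ $\mu_y$ is a continuous Borel probability measure on a standard space, hence isomorphic to Lebesgue measure $\lambda$ on $[0,1]$. By the measurable (relative) isomorphism theorem one can select these isomorphisms measurably in $y$, producing a measurable $\phi:X\to[0,1]$ with (i) $\phi_*\mu_y=\lambda$ for a.e.\ $y$, and (ii) the map $x\mapsto(\pi(x),\phi(x))$ is, modulo null sets, an isomorphism of $(X,\mathcal{A},\mu)$ onto $(Y\times[0,1],\nu\times\lambda)$ carrying $\mathcal{B}$ to the first factor. Define $\mathcal{C}:=\phi^{-1}(\mbox{Borel}([0,1]))$. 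Independence is then a one-line disintegration computation: for $B\in\mathcal{B}$ and $C=\phi^{-1}(E)$ one has $\mu(B\cap C)=\int_B\mu_y(\phi^{-1}E)\,d\nu(y)=\lambda(E)\,\nu(B)=\mu(C)\mu(B)$, using that $\mu_y(\phi^{-1}E)=\lambda(E)$ is constant in $y$. Density of $\langle\mathcal{B},\mathcal{C}\rangle$ in $\mathcal{A}$ is exactly the assertion that $(\pi,\phi)$ generates $\mathcal{A}$, which is part (ii).

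The genuinely substantial ingredient is (ii): producing a single $\phi$ whose fibrewise pushforwards are all $\lambda$ \emph{and} which, together with $\mathcal{B}$, recovers all of $\mathcal{A}$. This is the relative isomorphism/Maharam step, resting on a measurable selection of the fibre isomorphisms, and the non-atomicity established in the second paragraph is precisely the hypothesis that unlocks it. An alternative, more self-contained route would build $\mathcal{C}$ as the tail of a refining sequence of $2^n$-part partitions, each obtained by splitting the previous parts conditionally independently; but that requires upgrading the given $\mathcal{B}$-random partitions into random partitions relative to the finite algebras generated so far, and the existence of such relative partitions again reduces to fibrewise non-atomicity. I would therefore treat the non-atomicity of the $\mu_y$ and the measurable selection of fibre isomorphisms as the two load-bearing points.
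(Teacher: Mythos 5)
Your proposal is correct in substance but follows a genuinely different route from the paper. The paper never passes to a point realization: it works directly with conditional expectations on the given set $X$, building an increasing sequence of finite algebras $\mathcal{P}_k$ (generated by a dense sequence of $\mathcal{A}$ together with the random partitions $A_{i,j}$), ordering their atoms consistently, and defining the sets $S(\lambda,k)$ and $S(\lambda)=\cup_k S(\lambda,k)$ via cumulative sums of $E(R\mid\mathcal{B})$ over atoms $R$ --- in other words, it constructs by hand the conditional quantile function whose sublevel sets are exactly your $\{\phi<\lambda\}$, proves $E(S(\lambda)\mid\mathcal{B})=\lambda$, and verifies generation by approximating each ``interval'' of $\mathcal{P}_k$ by sets $F_d\cap S(\frac{d}{t})$. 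The role the random partitions play there is to force every atom of $\mathcal{P}_k$ to satisfy $E(R\mid\mathcal{B})\le 1/k$, which is precisely the finitary shadow of your fibrewise non-atomicity of the $\mu_y$; your derivation of non-atomicity from the partitions is correct, and it is the right conceptual translation of the paper's estimate. What your route buys is conceptual clarity: the statement becomes ``a factor map of standard spaces with non-atomic fibres is relatively isomorphic to $(Y\times[0,1],\nu\times\lambda)$,'' and independence plus density drop out in one line. What it costs is that the load-bearing step --- the measurable selection of fibre isomorphisms (Rokhlin's relative isomorphism theorem for Lebesgue spaces) --- is cited rather than proved, and its standard proof is essentially the paper's conditional-CDF construction, so the paper's argument is the self-contained version of the theorem you invoke. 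Two wrinkles you should make explicit if you write this up: (a) before Rokhlin disintegration applies you must realize the abstract separable $(X,\mathcal{A},\mu)$ with $\mathcal{B}\subseteq\mathcal{A}$ on a standard Borel space with an actual factor map $\pi$, which needs a point-realization lemma (the paper's Lemma on measure algebras, i.e.\ a map $f:X\to[0,1]$ inducing the measure-algebra isomorphism) and then a pullback of the constructed complement along $f$; and (b) atomlessness of $(X,\mathcal{A},\mu)$ itself, needed for that realization, follows from the hypothesis by the same argument as your fibre estimate (an atom of mass $a$ lies in a single $A_{i,k}$, whence $a\le 1/k$ for all $k$). With those two points supplied, your proof is complete, at the price of resting on a substantial standard theorem that the paper deliberately avoids.
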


\proof
Let $S_1,S_2,\dots$ be a countable generating system of $\mathcal{A}$ and
  let $\mathcal{P}_k$ denote the finite Boolean
 algebra generated by $S_1,S_2,\dots,S_k$ and
  $\{A_{i,j}|i\leq j\leq k\}$. Let $\mathcal{P}_k^*$ denote the atoms of
  $\mathcal{P}_k$. It is clear that for every atom $R\in\mathcal{P}_k^*$ we
  have that $E(R|\mathcal{B})\leq 1/k$ because $R$ is contained in one of the
  sets $A_{1,k},A_{2,k},\dots,A_{k,k}$. During the proof we fix one
  $\mathcal{B}$-measurable version of $E(R|\mathcal{B})$ for every $R$. The
  algebra $\mathcal{P}_{k}$ is a subalgebra of $\mathcal{P}_{k+1}$ for every
  $k$ and so we can define total orderings on the sets $\mathcal{P}_k^*$ such
  that if $R_1,R_2\in\mathcal{P}_k^*$ with $R_1<R_2$ and
  $R_3,R_4\in\mathcal{P}_{k+1}^*$ with $R_3\subseteq R_1, R_4\subseteq R_2$
  then $R_3<R_4$. We can assume that
  $\sum_{R\in\mathcal{P}_k^*}E(R,\mathcal{B})(x)=1$ for every element in
  $X$. It follows that for $k\in\mathbb{N}$, $x\in X$ and
  $\lambda\in [0,1)$ there is a
unique element $R(x,\lambda,k)\in\mathcal{P}_k^*$
  satisfying
$$\sum_{R<R(x,\lambda,k)}E(R|\mathcal{B})(x)\leq \lambda$$
and
$$\sum_{R\leq R(x,\lambda,k)}E(R|\mathcal{B})(x)>\lambda.$$
For an element $R\in\mathcal{P}_k^*$ let $T(R,\lambda,k)$ denote the
set of those points $x\in X$ for which $R(x,\lambda,k)=R$. It is
easy to see that $T(R,\lambda,k)$ is $\mathcal{B}$-measurable. Let
us define the $\mathcal{A}$-measurable set $S(\lambda,k)$ by
$$S(\lambda,k)=\bigcup_{R\in\mathcal{P}_k^*}(T(R,\lambda,k)
\cap(\cup_{R_2<R}R_2))$$
and $S'(\lambda,k)$ by
$$S'(\lambda,k)=\bigcup_{R\in\mathcal{P}_k^*}(T(R,\lambda,k)\cap(\cup_{R_2\leq
  R}R_2)).$$
\begin{proposition} \label{propindep}
\begin{description}
\item[(i)] $\lambda-\frac{1}{k}\leq 
E(S(\lambda,k)\mid \cB)(x)\leq \lambda$ for any $x\in X$.
\item[(ii)] If $k<t$, then
$S(\lambda,k)\subseteq S(\lambda,t)\subseteq S'(\lambda,k)\,.$
\item[(iii)] $E(S'(\lambda,k)\backslash S(\lambda,k)\mid\cB)(x)\leq
  \frac{1}{k} $ for any $x\in X$.
\end{description}
\end{proposition}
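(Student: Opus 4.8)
The plan is to reduce everything to the behaviour of the cumulative sums of the fixed versions of $E(R\mid\cB)$ along the chosen orderings of the atoms. For a point $x\in X$ write $Q_k(x)$ for the unique atom of $\mathcal{P}_k^*$ containing $x$, and set
$$a_k(x)=\sum_{R<Q_k(x)}E(R\mid\cB)(x),\qquad b_k(x)=\sum_{R\leq Q_k(x)}E(R\mid\cB)(x),$$
the partial sums taken over $\mathcal{P}_k^*$ in the given order. Since the sets $T(R,\lambda,k)$, $R\in\mathcal{P}_k^*$, form a $\cB$-measurable partition of $X$ on which $S(\lambda,k)$ and $S'(\lambda,k)$ coincide with $\cup_{R_2<R}R_2$ and $\cup_{R_2\leq R}R_2$ respectively, I can pull each $T(R,\lambda,k)$ out of the conditional expectation and, using additivity of $E(\cdot\mid\cB)$ over the atoms, obtain for $x$ with $R^*:=R(x,\lambda,k)$ the formulas
$$E(S(\lambda,k)\mid\cB)(x)=\sum_{R<R^*}E(R\mid\cB)(x),\qquad E(S'(\lambda,k)\mid\cB)(x)=\sum_{R\leq R^*}E(R\mid\cB)(x).$$

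With these, parts (i) and (iii) are immediate. The defining inequalities of $R^*=R(x,\lambda,k)$ say exactly that $\sum_{R<R^*}E(R\mid\cB)(x)\leq\lambda$ and $\sum_{R\leq R^*}E(R\mid\cB)(x)>\lambda$; the first gives the upper bound in (i), and subtracting $E(R^*\mid\cB)(x)\leq 1/k$ from the second gives the lower bound $\lambda-\frac1k$. For (iii) I note that $S(\lambda,k)\subseteq S'(\lambda,k)$ and that the conditional-expectation difference at such an $x$ is precisely $E(R^*\mid\cB)(x)\leq 1/k$.

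For (ii) I would first record the pointwise membership test obtained by comparing the atom $Q_k(x)$ with the crossing atom $R^*$. Since $R^*$ is the first atom at which the cumulative sum exceeds $\lambda$, monotonicity of the partial sums yields $x\in S(\lambda,k)\iff Q_k(x)<R^*\iff b_k(x)\leq\lambda$ and $x\in S'(\lambda,k)\iff Q_k(x)\leq R^*\iff a_k(x)\leq\lambda$. The whole of (ii) then reduces to the single chain of inequalities $a_k(x)\leq a_t(x)\leq b_t(x)\leq b_k(x)$ for $t>k$: if $b_k(x)\leq\lambda$ then $b_t(x)\leq\lambda$, giving $S(\lambda,k)\subseteq S(\lambda,t)$, and if $b_t(x)\leq\lambda$ then $a_k(x)\leq\lambda$, giving $S(\lambda,t)\subseteq S'(\lambda,k)$.

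The main work, and the step I expect to be the genuine obstacle, is establishing this chain from the refinement-compatibility of the orderings. Because $\mathcal{P}_k\subseteq\mathcal{P}_t$, each atom $Q\in\mathcal{P}_k^*$ is partitioned by $\mathcal{P}_t$-atoms with $\sum_{R'\subseteq Q}E(R'\mid\cB)(x)=E(Q\mid\cB)(x)$. The compatibility condition, iterated from consecutive levels, guarantees that every $\mathcal{P}_t$-atom contained in a $\mathcal{P}_k$-atom $Q<Q_k(x)$ lies strictly below $Q_t(x)$ in the $\mathcal{P}_t$-order, while those contained in atoms above $Q_k(x)$ lie strictly above. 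Splitting the sum defining $a_t(x)$ accordingly collapses the contribution from the atoms below $Q_k(x)$ to exactly $a_k(x)$ and leaves a nonnegative remainder coming from the $\mathcal{P}_t$-atoms inside $Q_k(x)$ that precede $Q_t(x)$, which gives $a_t(x)\geq a_k(x)$. The same splitting for $b_t(x)$ bounds the $Q_k(x)$-internal contribution by $E(Q_k(x)\mid\cB)(x)$, yielding $b_t(x)\leq a_k(x)+E(Q_k(x)\mid\cB)(x)=b_k(x)$. Care is needed only in verifying the iterated compatibility and the additivity bookkeeping; once these are in place the chain, and hence (ii), follows.
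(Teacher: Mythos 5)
Your proof is correct, and for parts (i) and (iii) it coincides with the paper's: the paper derives exactly your two formulas $E(S(\lambda,k)\mid\cB)(x)=\sum_{R<R(x,\lambda,k)}E(R\mid\cB)(x)$ and $E(S'(\lambda,k)\mid\cB)(x)=\sum_{R\leq R(x,\lambda,k)}E(R\mid\cB)(x)$ by pulling the $\cB$-measurable indicators $\chi_{T(R,\lambda,k)}$ out of the conditional expectation, and then reads off (i) and (iii) from the defining inequalities of $R(x,\lambda,k)$ together with $E(R'\mid\cB)\leq 1/k$. For (ii) your packaging differs from the paper's. The paper stays set-theoretic: it asserts the refinement identity $T(R,\lambda,k)=\bigcup_{R'\subseteq R,\,R'\in\mathcal{P}_t^*}T(R',\lambda,t)$ (i.e.\ the level-$t$ crossing atom sits inside the level-$k$ one) and then sandwiches the union defining $S(\lambda,t)$ between the unions defining $S(\lambda,k)$ and $S'(\lambda,k)$ using the order compatibility. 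You instead prove a pointwise numerical membership test, $x\in S(\lambda,k)\iff b_k(x)\leq\lambda$ and $x\in S'(\lambda,k)\iff a_k(x)\leq\lambda$, and reduce (ii) to the monotone chain $a_k(x)\leq a_t(x)\leq b_t(x)\leq b_k(x)$; this is essentially a cumulative-distribution-function reformulation of the same mechanism, and it buys a cleaner separation of the combinatorics (iterated order compatibility) from the analysis (monotonicity of partial sums), at the cost of the extra lemma characterizing membership. Note that both arguments rest on the same two tacit facts, which you correctly flag as the real bookkeeping: the consecutive-level ordering condition must be iterated to compare levels $k<t$, and the fixed versions of the conditional expectations must satisfy blockwise additivity $\sum_{R'\subseteq Q,\,R'\in\mathcal{P}_t^*}E(R'\mid\cB)(x)=E(Q\mid\cB)(x)$ pointwise (a priori this holds only almost everywhere; the paper silently assumes it, since its refinement identity for the $T$-sets needs it just as much as your chain does, and it can be arranged on a full-measure set without affecting the application). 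With that caveat, which you share with the paper, your argument is complete.
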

\proof
First observe that
$$ \lambda-\frac{1}{k}\leq \sum_{R< R(x,\lambda,k)}E(R\mid \cB)(x)\leq
\lambda\,,$$
for any $x\in X$. Also, we have
\begin{equation} \label{apr30}
S(\lambda,k)=\bigcup_{R,R_1\in \mathcal{P}_k^*, R<R_1}
(R\cap T(R_1,\lambda,k)),\quad
S'(\lambda,k)=\bigcup_{R,R_1\in \mathcal{P}_k^*, R\leq R_1}
(R\cap T(R_1,\lambda,k)). \end{equation}
That is by the basic property of the conditional expectation:
$$E(S(\lambda,k)\mid \cB)=
\sum_{R,R_1\in \mathcal{P}_k^*, R<R_1} E(R\mid \cB)
\chi_{T(R_1,\lambda,k)}\,.$$
That is
\begin{equation} \label{egy5}
E(S(\lambda,k)\mid \cB)(x)=\sum_{R<R(x,\lambda,k)}  E(R\mid \cB)(x)\,.
\end{equation}
and similarly
\begin{equation} \label{egy5b}
E(S'(\lambda,k)\mid \cB)(x)=\sum_{R\leq R(x,\lambda,k)}  E(R\mid \cB)(x)\,.
\end{equation}
Hence (i) and (iii) follows immediately, using the fact that
$E(R'\mid\cB)\leq \frac{1}{k}$ for any $R'\in \mathcal{P}_k^*$.

\noindent
Observe that for any $R\in \mathcal{P}_k^*$,
$T(R,\lambda,k)=\cup_{R'\subseteq R, R'\in \mathcal{P}_t^*}
T(R',\lambda,t)\,.$
Hence
$$\bigcup_{R,R_1\in \mathcal{P}_k^*, R<R_1}
(R\cap T(R_1,\lambda,k)) \subseteq
\bigcup_{R',R'_1\in \mathcal{P}_t^*, R'<R'_1}
(R'\cap T(R'_1,\lambda,t)) \subseteq$$ $$\subseteq
\bigcup_{R,R_1\in \mathcal{P}_k^*, R\leq R_1}
(R\cap T(R_1,\lambda,k))$$
Thus (\ref{apr30}) implies (ii)\,. 
 \qed

\vskip 0.2in
\noindent
\begin{lemma} Let $S(\lambda)=\cup^\infty_{k=1} S(\lambda,k)\,.$
 Then if $\lambda_2<\lambda_1$, then $S(\lambda_2)< S(\lambda_1)$.
\end{lemma}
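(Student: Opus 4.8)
The plan is to read the relation $S(\lambda_2) < S(\lambda_1)$ as the inclusion $S(\lambda_2)\subseteq S(\lambda_1)$, and to prove this by establishing the stronger level-by-level statement $S(\lambda_2,k)\subseteq S(\lambda_1,k)$ for every fixed $k$. The lemma then follows immediately, since $S(\lambda)=\cup_k S(\lambda,k)$ and inclusion is preserved under unions: each $S(\lambda_2,k)\subseteq S(\lambda_1,k)\subseteq S(\lambda_1)$, whence $S(\lambda_2)=\cup_k S(\lambda_2,k)\subseteq S(\lambda_1)$. Thus no appeal to the cross-level comparison in Proposition \ref{propindep}(ii) should be needed; the whole content is monotonicity in $\lambda$ at a single level.

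First I would record a clean pointwise description of $S(\lambda,k)$. For $x\in X$ let $a_k(x)\in\mathcal{P}_k^*$ denote the unique atom of $\mathcal{P}_k$ containing $x$. Since the sets $T(R,\lambda,k)$ partition $X$ according to the value of $R(x,\lambda,k)$, unwinding the definition $S(\lambda,k)=\bigcup_R(T(R,\lambda,k)\cap(\cup_{R_2<R}R_2))$ gives
$$x\in S(\lambda,k)\quad\Longleftrightarrow\quad a_k(x)<R(x,\lambda,k).$$
Equivalently — and this is the form I would actually exploit — setting the cumulative rank function $F_k(x)=\sum_{R\le a_k(x)}E(R\mid\mathcal{B})(x)$, one has $S(\lambda,k)=\{x:F_k(x)\le\lambda\}$, a sublevel set of the $\mathcal{A}$-measurable function $F_k$. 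This is the pointwise counterpart of formula (\ref{egy5}).

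The monotonicity input is that for each fixed $x$ and $k$ the quantile atom $R(x,\lambda,k)$ is non-decreasing in $\lambda$ with respect to the total order on $\mathcal{P}_k^*$. This is immediate from its defining inequalities: $R(x,\lambda,k)$ is the least atom $R$ whose cumulative conditional sum $\sum_{R'\le R}E(R'\mid\mathcal{B})(x)$ strictly exceeds $\lambda$, and the least such atom can only move up, never down, as the threshold $\lambda$ increases. Hence $\lambda_2<\lambda_1$ forces $R(x,\lambda_2,k)\le R(x,\lambda_1,k)$, so the condition $a_k(x)<R(x,\lambda_2,k)$ implies $a_k(x)<R(x,\lambda_1,k)$; in sublevel-set language $\{F_k\le\lambda_2\}\subseteq\{F_k\le\lambda_1\}$ is trivial. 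Either way $S(\lambda_2,k)\subseteq S(\lambda_1,k)$ for every $k$, which completes the argument.

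The only point requiring care is the bookkeeping in the first step: I must verify the equivalence $a_k(x)<R(x,\lambda,k)\iff F_k(x)\le\lambda$, watching the weak/strict inequalities in the definition of $R(x,\lambda,k)$ when some atoms satisfy $E(R\mid\mathcal{B})(x)=0$. Such atoms share a cumulative value with their predecessors, so one checks that they are never selected as $R(x,\lambda,k)$ (the defining strict inequality $\sum_{R'\le R}>\lambda\ge\sum_{R'<R}$ fails for zero-mass $R$) and that the passage between the two descriptions survives these degeneracies. Once this is pinned down there is no genuine obstacle, since the monotonicity is then automatic.
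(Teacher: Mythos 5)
Your proof is correct and follows essentially the same route as the paper: both reduce to the level-by-level inclusion $S(\lambda_2,k)\subseteq S(\lambda_1,k)$ by observing that the quantile atom $R(x,\lambda,k)$ is monotone in $\lambda$ for each fixed $x$ and $k$, with no appeal to Proposition \ref{propindep}(ii). Your write-up is in fact slightly more careful than the paper's, which asserts the strict inequality $R(x,\lambda_2,k)<R(x,\lambda_1,k)$ where only $R(x,\lambda_2,k)\leq R(x,\lambda_1,k)$ holds in general --- and, as you note, the weak inequality suffices.
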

\proof Note that $x\in S(\lambda_2,k)$ if and only if
$x\in R_2$ for some $R_2< R(x,\lambda_2,k)\,.$ Obviously,
$R(x,\lambda_2,k)< R(x,\lambda_1,k)$, thus $x\in S(\lambda_1,k)$. Hence
$S(\lambda_2)\subseteq S(\lambda_1)$ \qed
\begin{lemma} $E(S(\lambda)\mid \cB)=\lambda$.
\end{lemma}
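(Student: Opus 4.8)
The plan is to prove $E(S(\lambda)\mid\cB)=\lambda$ by combining the monotone-limit description of $S(\lambda)$ with the two-sided estimates from Proposition~\ref{propindep}. First I would recall from the preceding lemma that $S(\lambda,k)\subseteq S(\lambda,t)$ whenever $k<t$ (part (ii) of the proposition), so the family $\{S(\lambda,k)\}_{k=1}^\infty$ is increasing in $k$ and $S(\lambda)=\cup_{k=1}^\infty S(\lambda,k)$ is a genuine monotone union. Hence $\chi_{S(\lambda,k)}\uparrow\chi_{S(\lambda)}$ pointwise, and by the monotone convergence property of conditional expectation (the conditional expectations $E(S(\lambda,k)\mid\cB)$ form an increasing sequence of $\cB$-measurable functions bounded by $1$), we get
$$E(S(\lambda)\mid\cB)(x)=\lim_{k\to\infty}E(S(\lambda,k)\mid\cB)(x)$$
for almost every $x\in X$.

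The second step is to evaluate this limit using part (i) of Proposition~\ref{propindep}, which asserts
$$\lambda-\tfrac{1}{k}\leq E(S(\lambda,k)\mid\cB)(x)\leq\lambda$$
for every $x\in X$. Letting $k\to\infty$ squeezes the limit: the lower bound $\lambda-\frac1k$ tends to $\lambda$ while the upper bound is already $\lambda$, so $\lim_{k\to\infty}E(S(\lambda,k)\mid\cB)(x)=\lambda$ for every $x$. Combining this with the monotone-limit identity from the first step yields $E(S(\lambda)\mid\cB)=\lambda$ almost everywhere, which is exactly the claim.

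The one point requiring care — and the place I expect the only real friction — is justifying the interchange of limit and conditional expectation, i.e.\ that $E(\cup_k S(\lambda,k)\mid\cB)=\lim_k E(S(\lambda,k)\mid\cB)$. This is the conditional version of monotone convergence and follows from the defining property $\int_Y E(g\mid\cB)\,d\mu=\int_Y g\,d\mu$ for all $Y\in\cB$ recalled before the Integration Rule: apply ordinary monotone convergence to both sides with $g=\chi_{S(\lambda,k)}$, and since the increasing limit of the $\cB$-measurable functions $E(S(\lambda,k)\mid\cB)$ is itself $\cB$-measurable and satisfies the same integral identity against every $Y\in\cB$, it must coincide with $E(S(\lambda)\mid\cB)$ by uniqueness of conditional expectation. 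Since the two-sided bound in (i) holds for \emph{every} $x$ rather than merely almost everywhere, the squeeze argument is completely uniform and leaves no exceptional set, so the conclusion $E(S(\lambda)\mid\cB)=\lambda$ holds as an identity of $\cB$-measurable functions.
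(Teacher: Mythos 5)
Your proof is correct and takes essentially the same route as the paper: both pass to the limit of $E(S(\lambda,k)\mid\cB)$ along the increasing family $S(\lambda,k)$ and then squeeze using part (i) of Proposition~\ref{propindep}; the only cosmetic difference is that the paper justifies the interchange of limit and conditional expectation by $L^2$-continuity (since $\chi_{S(\lambda,k)}\to\chi_{S(\lambda)}$ in $L^2$ and $E(\cdot\mid\cB)$ is a Hilbert-space projection), whereas you invoke conditional monotone convergence. One tiny overstatement: your closing remark that the identity holds with ``no exceptional set'' cannot quite be maintained, since the identification of the monotone limit with $E(S(\lambda)\mid\cB)$ is itself only determined up to a null set --- but as the lemma asserts an equality of conditional expectations, almost-everywhere equality is exactly what is claimed.
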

\proof
Since $\chi_{S(\lambda,k)}\stackrel
{L_2(X,\mu)}{\to} \chi_{S(\lambda)}$, we have
$E(S(\lambda,k)\mid \cB)
\stackrel{L_2(X,\mu)}{\to}E(S(\lambda)\mid \cB)$ That is
by (i) of Proposition \ref{propindep} $E(S(\lambda)\mid \cB)=\lambda$. \qed
\vskip 0.2in
\noindent
The last two lemmas together imply that the sets $S(\lambda)$ generate a
$\sigma$-algebra $\mathcal{C}$ which is independent from $\mathcal{B}$.

\noindent
Now we have to show that $\mathcal{B}$ and $\mathcal{C}$ generate
$\mathcal{A}$. Let $S\in\mathcal{P}_k$  for some
$k\in\mathbb{N}$. We say that $S$ is an interval if there exists an element
$R\in\mathcal{P}_k^*$ such that $S=\cup_{R_1\leq R}R_1$.  It is enough to show
that any interval $S\in\mathcal{P}_k$ can be generated by $\mathcal{B}$ and
$\mathcal{C}$.

\noindent
Suppose that $\{T_t\}^\infty_{t=1}$ be sets in $\langle\cB,\cC\rangle$ and
$\|E(S\mid \cB)-E(T_t\mid \cB)\|^2\to 0$. Then $\mu(S \triangle T_t)\to 0$
as $t\to 0$, that is $\cB$ and $\cC$ generate $S$.
Indeed,
$$\mu(S\triangle T_n)^2=\| \chi_S- \chi_{T_n}\|^2\geq \|E(S\mid B)- E(T_n\mid
B)\|^2\,.$$
So let $t\geq k$ be an arbitrary natural number. It is clear that
$S$ is an interval in $\mathcal{P}_t$. For a natural number $0\leq d\leq t-1$
let $F_d$ denote the $\mathcal{B}$-measurable set on which $E(S|\mathcal{B})$
is in the interval $[\frac{d}{t},\frac{d+1}{t})$. Now we approximate $S$ by
$$T_t=\bigcup_{d=0}^{t-1}(F_d\cap S(\frac{d}{t}))\in
\langle\mathcal{B},\mathcal{C}\rangle.$$
\begin{lemma} For any $x\in X$,
$$\left|E(S\mid \cB)(x)-E(T_t\mid \cB)(x)\right|\leq \frac{3}{t}\,.$$
\end{lemma}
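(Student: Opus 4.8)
The plan is to reduce everything to the pointwise estimates of Proposition \ref{propindep} applied at level $t$, and then to assemble three error terms of size $1/t$ by the triangle inequality. Fix an arbitrary $x\in X$ and let $d$ be the unique index with $x\in F_d$; by the very definition of $F_d$ (the set where $E(S\mid\mathcal{B})$ falls in $[\frac{d}{t},\frac{d+1}{t})$) this already supplies the first estimate
$$\left|E(S\mid\mathcal{B})(x)-\frac{d}{t}\right|\leq\frac{1}{t}\,.$$
Next I would compute $E(T_t\mid\mathcal{B})(x)$. Since the sets $F_0,\dots,F_{t-1}$ are $\mathcal{B}$-measurable and partition $X$, and $S(d'/t)\in\mathcal{C}\subseteq\mathcal{A}$, the pull-out property of conditional expectation gives $E(T_t\mid\mathcal{B})=\sum_{d'=0}^{t-1}\chi_{F_{d'}}E(S(d'/t)\mid\mathcal{B})$, so that for our fixed $x\in F_d$ one simply gets $E(T_t\mid\mathcal{B})(x)=E(S(d/t)\mid\mathcal{B})(x)$. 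Thus the lemma is reduced to showing $|E(S(d/t)\mid\mathcal{B})(x)-d/t|\leq 2/t$.

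For this I would deliberately \emph{not} invoke the almost-everywhere identity $E(S(\lambda)\mid\mathcal{B})=\lambda$, but instead keep everything pointwise by squeezing $S(d/t)$ between the two level-$t$ sets. By part (ii) of Proposition \ref{propindep} we have the everywhere inclusions $S(d/t,t)\subseteq S(d/t)\subseteq S'(d/t,t)$, so the chosen version of $E(S(d/t)\mid\mathcal{B})$ lies between the fixed representatives furnished by (\ref{egy5}) and (\ref{egy5b}); part (iii), applied with $k=t$, bounds the gap of these representatives and hence yields
$$\left|E(S(d/t)\mid\mathcal{B})(x)-E(S(d/t,t)\mid\mathcal{B})(x)\right|\leq\frac{1}{t}\,.$$
Finally part (i), again with $k=t$ and $\lambda=d/t$, gives $|E(S(d/t,t)\mid\mathcal{B})(x)-d/t|\leq 1/t$. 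Adding these two inequalities proves the reduced claim, and combining it with the first estimate through the triangle inequality produces exactly the stated bound $3/t$.

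The only delicate point—and the step I would treat with the most care—is that the statement asks for the inequality at \emph{every} $x\in X$ rather than merely almost everywhere. This is precisely why the argument is routed through the explicit pointwise representatives (\ref{egy5}), (\ref{egy5b}) and the everywhere set inclusions of Proposition \ref{propindep}(ii), instead of through the $L^2$-identity $E(S(\lambda)\mid\mathcal{B})=\lambda$. Concretely one must verify that the defining inequalities of $R(x,d/t,t)$ force $d/t$ to lie in the band $[\,E(S(d/t,t)\mid\mathcal{B})(x),\,E(S'(d/t,t)\mid\mathcal{B})(x)\,]$ for \emph{each} $x$, and that the chosen $\mathcal{B}$-measurable version of $E(T_t\mid\mathcal{B})$ is taken to be $\sum_{d'}\chi_{F_{d'}}E(S(d'/t)\mid\mathcal{B})$, so that both the squeezing and the pull-out computation hold at every point and not just up to a null set.
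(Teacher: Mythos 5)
Your proof is correct and follows essentially the same route as the paper's: the pull-out identity $E(T_t\mid\cB)=\sum_{d'}\chi_{F_{d'}}E(S(d'/t)\mid\cB)$, the squeeze $S(d/t,t)\subseteq S(d/t)\subseteq S'(d/t,t)$ with Proposition \ref{propindep}(iii), and a triangle inequality assembling three errors of size $1/t$. The only (cosmetic) difference is that you compare through the value $d/t$ via part (i), where the paper compares $E(S\mid\cB)(x)=\sum_{R'\leq R}E(R'\mid\cB)(x)$ directly with the representative $\sum_{R'<R(x,d/t,t)}E(R'\mid\cB)(x)$ of (\ref{egy5}); your explicit attention to fixing pointwise versions of the conditional expectations is, if anything, more careful than the original.
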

\proof
First note that by Proposition \ref{propindep} (iii)
$$\left|E(S(\frac{d}{t})\mid \cB)(x)- E(S(\frac{d}{t},t)\mid \cB)(x)\right|\leq
\frac{1}{t}\,. $$
Note that
$$E(T_t\mid \cB)(x)=\sum^{t-1}_{d=0} \chi_{F_d}(x)
E(S(\frac{d}{t})\mid\cB)(x)\,.$$
Suppose that $x\in F_d$. Then
$$\left|E(T_t\mid \cB)(x)-\sum_{R'<R(x,\frac{d}{t},t)} 
E(R'\mid \cB)(x)\right|\leq 
\frac{1}{t}\,. $$
On the other hand
$E(S\mid\cB)(x)=\sum_{R'\leq R} E(R'\mid \cB)(x)$ and
$\frac{d}{t}\leq \sum_{R'\leq R} E(R'\mid \cB)(x) < \frac{d+1}{t}\,.$
That is 
$$\left|E(S\mid \cB)(x)-E(T_t\mid \cB)(x)\right|\leq \frac{3}{t}
\,.\quad \qed $$
The Theorem now follows from the Lemma immediately. \qed

\begin{definition} Let $(X,\mathcal{A},\mu)$ be a probability space, and
  assume that a finite group $G$ is acting on $X$ such that $\mathcal{A}$ is
  $G$-invariant as a set system. We say that the action of $G$ is free if
  there is a subset $S$ of $X$ with $\mu(S)=1/|G|$ such that $S^{g_1}\cap
  S^{g_2}=\emptyset$ whenever $g_1$ and $g_2$ are distinct elements of $G$.
\end{definition}

We will need the following consequence of Theorem \ref{incom}.

\begin{lemma}\label{incom2} Let $\mathcal{A}\geq\mathcal{B}$ be two separable
  $\sigma$-algebras on the set $X$ and let $\mu$ be a probability measure on
  $\mathcal{A}$. Assume that a finite group $G$ is acting on $X$ such that
  $\mathcal{A},\mathcal{B}$ and $\mu$ are $G$ invariant. Assume furthermore
  that the action of $G$ on $(X,\mathcal{B},\mu)$ is free and that there is a
  $\mathcal{B}$-random $k$ partition of $X$ in $\mathcal{A}$ for every natural
  number $k$. Then there is an independent complement $\mathcal{C}$ in
  $\mathcal{A}$ for $\mathcal{B}$ such that $\mathcal{C}$ is 
  elementwise $G$-invariant.
\end{lemma}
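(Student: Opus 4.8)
The plan is to reduce to Theorem \ref{incom} on a fundamental domain for the $G$-action and then propagate the resulting complement across $X$ by the group action. Since the action of $G$ on $(X,\mathcal{B},\mu)$ is free, there is a $\mathcal{B}$-measurable set $S$ with $\mu(S)=1/|G|$ and $S^{g_1}\cap S^{g_2}=\emptyset$ for distinct $g_1,g_2$; because $\mu$ is $G$-invariant each $S^g$ has measure $1/|G|$, so the translates $\{S^g\}_{g\in G}$ partition $X$ up to a null set. I would regard $S$ as a probability space with the normalized measure $\mu_S=|G|\mu(\cdot\cap S)$ and the restricted separable $\sigma$-algebras $\mathcal{A}_S=\{A\cap S:A\in\mathcal{A}\}$ and $\mathcal{B}_S=\{B\cap S:B\in\mathcal{B}\}$. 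Intersecting a $\mathcal{B}$-random $k$-partition of $X$ with $S$ yields a $\mathcal{B}_S$-random $k$-partition of $S$ (the normalizing factor of $\mu_S$ cancels in the defining identity $E(A_{i,k}\mid\mathcal{B})=1/k$), so Theorem \ref{incom} applies on $S$ and produces an independent complement $\mathcal{C}_S$ of $\mathcal{B}_S$ in $\mathcal{A}_S$.

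Next I would build the $G$-invariant complement. For $C\in\mathcal{C}_S$ (so $C\subseteq S$) set $\widetilde{C}=\bigcup_{g\in G}C^g$; this set is $G$-invariant, satisfies $\widetilde C\cap S^g=C^g$, and $\widetilde{C_1}\cap\widetilde{C_2}=\widetilde{C_1\cap C_2}$ since the cross terms $C_1^g\cap C_2^h$ with $g\neq h$ vanish. Thus $\{\widetilde C:C\in\mathcal{C}_S\}$ is a $\pi$-system containing $X=\widetilde S$, and I let $\mathcal{C}$ be the $\sigma$-algebra it generates. Because the $G$-invariant subsets of $X$ form a $\sigma$-algebra and every generator $\widetilde C$ is $G$-invariant, $\mathcal{C}$ is automatically elementwise $G$-invariant.

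Independence of $\mathcal{C}$ from $\mathcal{B}$ I would check on the generators, using $G$-invariance of $\mu$ and $\mathcal{B}$ together with the independence of $\mathcal{C}_S$ and $\mathcal{B}_S$ inside $S$: for $B\in\mathcal{B}$,
$$\mu(\widetilde C\cap B)=\sum_{g\in G}\mu(C^g\cap B)=\sum_{g\in G}\mu(C\cap B^{g^{-1}}\cap S)=\mu(\widetilde C)\sum_{g\in G}\mu(B\cap S^g)=\mu(\widetilde C)\mu(B),$$
where $\mu(\widetilde C)=|G|\mu(C)=\mu_S(C)$ and I used $\mu_S(C\cap(B^{g^{-1}}\cap S))=\mu_S(C)\,\mu_S(B^{g^{-1}}\cap S)$ in $S$. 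Since $\{\widetilde C\}$ is a $\pi$-system generating $\mathcal{C}$, the $\pi$--$\lambda$ theorem upgrades this to independence of $\mathcal{C}$ and $\mathcal{B}$.

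Finally, for density of $\langle\mathcal{B},\mathcal{C}\rangle$ in $\mathcal{A}$ --- the step I expect to be the main obstacle --- I would decompose an arbitrary $A\in\mathcal{A}$ along the partition as $A=\bigcup_{g}(A\cap S^g)$ and treat each piece separately. For fixed $g$ the set $A^{g^{-1}}\cap S$ lies in $\mathcal{A}_S$, hence is approximable in $\mu_S$ by a finite union $D_g'$ of rectangles $B'\cap C'$ with $B'\in\mathcal{B}_S$, $C'\in\mathcal{C}_S$. Translating by $g$, each rectangle maps to $(B')^g\cap(C')^g$, where $(B')^g\in\mathcal{B}$ and $(C')^g=\widetilde{C'}\cap S^g\in\langle\mathcal{B},\mathcal{C}\rangle$ because $S^g\in\mathcal{B}$; so $(D_g')^g\subseteq S^g$ lies in $\langle\mathcal{B},\mathcal{C}\rangle$. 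Assembling $D=\bigcup_g(D_g')^g$ and measuring the error within the disjoint parts $S^g$, I get $\mu(A\triangle D)=\sum_g\mu((A\cap S^g)\triangle(D_g')^g)<\epsilon$ once each piece is approximated to within $\epsilon/|G|$. This shows $\langle\mathcal{B},\mathcal{C}\rangle$ is dense in $\mathcal{A}$, completing the verification that $\mathcal{C}$ is an elementwise $G$-invariant independent complement. The care needed is precisely in matching the translated rectangle approximations back into $\langle\mathcal{B},\mathcal{C}\rangle$ and keeping each approximant confined to its fundamental-domain translate.
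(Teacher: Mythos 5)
Your proposal is correct and follows essentially the same route as the paper: restrict to the fundamental domain $S$ furnished by freeness, apply Theorem \ref{incom} there to get a complement $\mathcal{C}_S$ of $\mathcal{B}|_S$, and saturate it under $G$ via $\widetilde{C}=\bigcup_{g\in G}C^g$ to obtain the elementwise $G$-invariant complement. The only difference is one of detail: where the paper verifies independence with the single identity $E(\cup_{g\in G}H^g\mid\mathcal{B})=\sum_{g\in G}E(H\mid\mathcal{B}|_S)^g$ and declares density clear, you carry out the same verifications explicitly (the generator computation plus the $\pi$--$\lambda$ theorem, and the per-translate rectangle approximation using $(C')^g=\widetilde{C'}\cap S^g$ with $S^g\in\mathcal{B}$), all of which is sound.
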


\begin{proof} Let $S\in\mathcal{B}$ be a set showing that $G$ acts freely
  on $\mathcal{B}$. Let $\mathcal{A}|_S$ and
  $\mathcal{B}|_S$ denote the restriction of $\mathcal{A}$ and
  $\mathcal{B}$ to the set $S$. It is clear that if
  $\{A_1,A_2,\dots,A_k\}$ is a $\mathcal{B}$-random $k$-partition in
  $\mathcal{A}$ then $\{S\cap A_1,S\cap A_2,\dots,S\cap A_k\}$ is a
  $\mathcal{B}|_S$-random $k$ partition in $\mathcal{A}|_S$. Hence by
  Theorem \ref{incom} there exists an independent complement $\mathcal{C}_1$ of
  $\mathcal{B}|_S$ in $\mathcal{A}|_S$. The set
$$\mathcal{C}=\{\bigcup_{g\in G} H^g|H\in\mathcal{C}_1\}$$ is a
$\sigma$-algebra because the action of $G$ is free. Note that the
elements of $\mathcal{C}$ are $G$-invariant. Since $E(\cup_{g\in G}
H^g|\mathcal{B})=\sum_{g\in G} E(H|\mathcal{B}|_S)^g$ we get that
the elements of $\mathcal{C}$ are independent form $\mathcal{B}$. It
is clear that $\langle\mathcal{C},\mathcal{B}\rangle$ is dense in
$\mathcal{A}$. \qed
\end{proof}

\subsection{Separable Realization}\label{sepreal}

In this section we show how to pass from nonseparable $\sigma$-algebras to
separable ones.

First note that the symmetric group $S_k$ acts on the space $X^k$ by
permuting the coordinates:
$$(x_1,x_2,\dots,x_k)^\pi=(x_{\pi^{-1}(1)}, x_{\pi^{-1}(2)},\dots,
x_{\pi^{-1}(k)})\,.$$
The group also acts on the subsets of $[k]$ and $\sigma(A)^\pi=
\sigma(A^\pi)$, where $A^\pi$ denotes the image of the subset $A$
under $\pi\in S_k$. We shall denote by $S_A$ the symmetric group
acting on the subset $S_A$.

\begin{definition}\label{sepre} A {\bf separable realization} of degree $r$ on
  $\xo^k~~, r\leq k$ is a system of atomless separable $\sigma$-algebras
  $\{l(A)~|~\emptyset\neq A\subseteq [k]~,~|A|\leq r\}$ and 
functions $\{F_A:\xo^k\to
  [0,1]~|~\emptyset\neq A\subseteq [k]~,~|A|\leq r\}$ with the 
following properties
\begin{enumerate}
\item $l(A)$ is a subset of $\sigma(A)$ and is independent 
from $\sigma(A)^*$ for every $\emptyset\neq
  A\subseteq [k]$.
\item $l(A)^\pi=l(A^\pi)$ for every permutation $\pi\in S_k$.
\item $S^\pi=S$ for every $S\in l(A)$ and $\pi\in S_A$.
\item $F_A$ is an $l(A)$-measurable function which defines a measurable 
equivalence
between the measure algebras of $(\xo^k,l(A),\muo^k)$ and $[0,1]$. (see
Appendix)
\item $F_A(x)=F_{A^\pi}(x^\pi)$ for every element 
$x\in\xo^k~,~\pi\in S_k$ and
$A\subseteq [k]$.
\end{enumerate}
\end{definition}

 The main theorem in this section is the following one.

\begin{theorem}\label{reali} For every $\bH\in\sigma([k])$ there exists
 a separable realization
  of degree $k$ such that $\bH$ is measurable in $\langle l(A)~|~\emptyset\neq
  A\subseteq [k]\rangle$.
\end{theorem}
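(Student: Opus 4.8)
The plan is to build the separable realization one "level" at a time, inducting on the size of the subsets $A\subseteq[k]$, using the Independent Complement theorem (Theorem \ref{incom}) as the workhorse at each stage. Concretely, I would construct the $\sigma$-algebras $l(A)$ in increasing order of $|A|$, so that when processing subsets of size $s$ I may assume that $l(B)$ has already been fixed for every $B$ with $|B|<s$ and that the union of these lower-level algebras is separable. The target is to realize $l(A)$ as a \emph{separable independent complement} of $\sigma(A)^*$ inside $\sigma(A)$ (more precisely, inside a separable subalgebra of $\sigma(A)$ that is rich enough to eventually make $\bH$ measurable). Since we only ever need countably much information to capture $\bH$, the first step is to fix a countable generating family: take a sequence of sets in $\sigma([k])$ that generates $\bH$ together with everything forced by the symmetry and tower conditions, and close it under the $S_k$-action and under projections onto all coordinate sub-$\sigma$-algebras $\sigma(A)$.

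The key technical input is that Proposition \ref{random} supplies, for every $A$ and every $k$, a $\sigma(A)^*$-random $k$-partition of $X_A$, i.e. a partition into $\sigma(A)$-measurable sets $S_1,\dots,S_k$ with $E(S_i\mid\sigma(A)^*)=1/k$. This is exactly the hypothesis needed to invoke Theorem \ref{incom} (the Maharam-type lemma): for any separable $\mathcal{A}$ with $\sigma(A)^*\cap\mathcal{A}\le\mathcal{A}\le\sigma(A)$, the random partitions restrict to random partitions, so an independent complement $l(A)$ of $\sigma(A)^*$ exists inside $\mathcal{A}$. This gives property (1) directly. To obtain the \emph{atomless} separable complement that is measurably isomorphic to $[0,1]$, I would feed the resulting complement into the standard fact (cited in the Appendix) that an atomless separable measure algebra admits a generator $F_A$ realizing a measure isomorphism with $[0,1]$; choosing $F_A$ then supplies property (4).

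The main obstacle, and the reason the theorem is more than a single application of Theorem \ref{incom}, is enforcing the symmetry conditions (2), (3), (5) simultaneously with independence. Here the group $S_k$ acts on the coordinates, and I must produce a complement that is \emph{genuinely} $S_k$-equivariant rather than merely abstractly isomorphic to its images. The plan is to use the symmetrized version, Lemma \ref{incom2}: the stabilizer of $A$ in $S_k$ acts on $(\xo^k,\sigma(A)^*,\muo)$, and for the coordinates inside $A$ one needs this action to be free so that Lemma \ref{incom2} yields a complement $l(A)$ whose elements are pointwise invariant under $S_A$ (property (3)). The freeness of the relevant group action on the base algebra is precisely the hypothesis whose verification I expect to be delicate; it should follow from the atomlessness of the ultraproduct measure together with the fact that distinct coordinates can be separated by cylindric-intersection sets, but making this rigorous is the crux. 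Once $l(A)$ is built $S_A$-invariantly for one representative $A$ in each $S_k$-orbit, I would transport it across the orbit by setting $l(A^\pi):=l(A)^\pi$ and $F_{A^\pi}(x):=F_A(x^{\pi^{-1}})$, which forces (2) and (5) by construction; one then checks consistency, i.e. that the definition is independent of the choice of $\pi$ carrying $A$ to $A^\pi$, which is guaranteed exactly by the $S_A$-invariance secured above.

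Finally, I would verify that $\bH$ lands in $\langle l(A)\mid\emptyset\neq A\subseteq[k]\rangle$. This is ensured by having seeded the induction with a countable generating system containing $\bH$ and by arranging at each level that the separable algebra $\mathcal{A}$ fed into Theorem \ref{incom} contains the appropriate $\sigma(A)$-projections of those generators; since $\langle\mathcal{B},l(A)\rangle=\langle\sigma(A)^*,l(A)\rangle$ is dense in the chosen $\mathcal{A}$ by the independent-complement conclusion, the full tower of algebras recaptures all the seeded generators and hence $\bH$. The induction terminates at $|A|=k$, giving a separable realization of degree $k$ as required.
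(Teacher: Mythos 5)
Your overall architecture matches the paper's (seed a countable family containing $\bH$, symmetrize, use Proposition \ref{random} plus the Maharam-type complement, with Lemma \ref{incom2} for equivariance and transport along the $S_k$-orbit), but there is a genuine gap at the central step. You write that applying Theorem \ref{incom} inside a separable $\mathcal{A}\leq\sigma(A)$ with base $\mathcal{B}=\sigma(A)^*\cap\mathcal{A}$ yields ``an independent complement $l(A)$ of $\sigma(A)^*$\dots This gives property (1) directly.'' It does not: Theorem \ref{incom} requires \emph{both} algebras to be separable, so its conclusion is independence of $l(A)$ from the separable base $\mathcal{B}$ only, and independence from a sub-$\sigma$-algebra of $\sigma(A)^*$ does not imply independence from the full (nonseparable) $\sigma(A)^*$. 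The paper closes exactly this hole with Lemmas \ref{sep1} and \ref{sep2}: the separable tower $d(A)$ is built so that $E(R\mid\langle d(B)\mid B\in A^*\rangle)=E(R\mid\sigma(A)^*)$ for every $R\in d(A)$; then any $R$ in the complement has $E(R\mid\sigma(A)^*)=E(R\mid d(A)^*)=\mu(R)$ constant, which upgrades independence to all of $\sigma(A)^*$. Your proposed closure operation --- closing the generators ``under projections onto all coordinate sub-$\sigma$-algebras $\sigma(A)$'' --- does not supply this: the relevant conditioning algebra is the join $\sigma(A)^*=\langle\sigma(B)\mid B\in A^*\rangle$, which is not a coordinate algebra, and after adjoining the level sets of $E(D\mid\sigma(A)^*)$ one must further decompose the resulting separable subalgebra of $\sigma(A)^*$ into separable pieces of the individual $\sigma(B)$'s, which spawns new generators at lower levels. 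This cascade is why the paper's recursion (Lemma \ref{sep2}) runs \emph{downward} in $|A|$, from $[k]$ to singletons, before any complement is chosen --- the opposite direction of your increasing induction, which fixes the lower levels before the higher-level generators whose conditional expectations they must capture are known.

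A second, smaller gap is the freeness hypothesis for Lemma \ref{incom2}, which you correctly identify as the crux but leave unproved. The paper's Lemma \ref{inv1} settles it with a concrete device: by permutation invariance the singleton algebras $d(\{i\})$ all pull back from one atomless separable algebra on $\xo$; composing a measure-preserving map $F:\xo\to[0,1]$ coordinatewise and taking $S=G^{-1}(\{x_1<x_2<\dots<x_r\})$ gives a set of measure $1/|A|!$ with pairwise disjoint $S_A$-translates. Your suggested route via ``separating coordinates by cylindric-intersection sets'' is vaguer and, as stated, not obviously a proof; note also that freeness is needed on the separable base $d([r])^*$ (which contains the singleton algebras), not on $\sigma([r])^*$ itself. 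With the conditional-expectation--preserving tower of Lemma \ref{sep2} and the ordering trick of Lemma \ref{inv1} inserted, your outline becomes essentially the paper's proof; without them, the two load-bearing claims (property (1) and the hypothesis of Lemma \ref{incom2}) are unsupported.
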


We will need the following three lemmas.

\begin{lemma}\label{sep1} Let $\mathcal{B}\subseteq\mathcal{A}$ two
$\sigma$-algebras on a
  set $X$, and let $\mu$ be a probability measure on $\mathcal{A}$. Then for
  any separable sub-$\sigma$-algebra $\bar{\mathcal{A}}$ of $\mathcal{A}$
  there exists
 a separable sub $\sigma$-algebra $\bar{\mathcal{B}}$ of $\mathcal{B}$
  such that $E(A|\mathcal{B})=E(A|\bar{\mathcal{B}})$
for every $A\in\bar{\mathcal{A}}$.
\end{lemma}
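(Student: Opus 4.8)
The plan is to build $\bar{\mathcal{B}}$ directly out of the conditional expectations of a countable generating family of $\bar{\mathcal{A}}$, and then to verify the required identity first on the generators and afterwards propagate it to all of $\bar{\mathcal{A}}$. First, since $\bar{\mathcal{A}}$ is separable, I would fix a countable Boolean subalgebra $\mathcal{R}\subseteq\bar{\mathcal{A}}$ that generates $\bar{\mathcal{A}}$ as a $\sigma$-algebra. For each $R\in\mathcal{R}$ consider the $\mathcal{B}$-measurable function $g_R:=E(R\mid\mathcal{B})$, and let $\bar{\mathcal{B}}$ be the sub-$\sigma$-algebra of $\mathcal{B}$ generated by the countable family $\{g_R\mid R\in\mathcal{R}\}$ (equivalently, by the sets $\{g_R<q\}$ for $q\in\mathbb{Q}$). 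Being generated by countably many sets, $\bar{\mathcal{B}}$ is separable, and by construction $\bar{\mathcal{B}}\subseteq\mathcal{B}$.

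The key observation is a one-line application of the tower property. Each $g_R$ is $\bar{\mathcal{B}}$-measurable and $\bar{\mathcal{B}}\subseteq\mathcal{B}$, so
$$E(R\mid\bar{\mathcal{B}})=E\big(E(R\mid\mathcal{B})\mid\bar{\mathcal{B}}\big)=E(g_R\mid\bar{\mathcal{B}})=g_R=E(R\mid\mathcal{B})$$
for every $R\in\mathcal{R}$. Thus the desired identity already holds on the generating algebra $\mathcal{R}$, and the construction of $\bar{\mathcal{B}}$ was engineered precisely so that this would be automatic.

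It then remains to upgrade the equality from $\mathcal{R}$ to all of $\bar{\mathcal{A}}$, which is the only step that needs care. I would argue via the $\pi$--$\lambda$ theorem: the collection $\mathcal{D}:=\{A\in\bar{\mathcal{A}}\mid E(A\mid\mathcal{B})=E(A\mid\bar{\mathcal{B}})\}$ contains the $\pi$-system $\mathcal{R}$, and it is a $\lambda$-system because both $E(\cdot\mid\mathcal{B})$ and $E(\cdot\mid\bar{\mathcal{B}})$ are linear operators on $L^2(X,\mathcal{A},\mu)$ that are continuous under monotone limits: if $A_n\uparrow A$ then $\chi_{A_n}\to\chi_A$ in $L^2$, and since conditional expectation is an $L^2$-contraction both sides converge to the corresponding conditional expectation of $\chi_A$. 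Closure under complements follows from linearity ($E(A^c\mid\cdot)=1-E(A\mid\cdot)$) and $X\in\mathcal{D}$ is immediate. Since $\sigma(\mathcal{R})=\bar{\mathcal{A}}$, Dynkin's theorem yields $\mathcal{D}=\bar{\mathcal{A}}$, which is exactly the assertion of the lemma. Equivalently, one could phrase this as an $L^2$-density argument: the $\mathcal{R}$-simple functions are dense in $L^2(X,\bar{\mathcal{A}},\mu)$, and the two bounded operators, agreeing on them, agree on the whole space.

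The main obstacle is really just this last propagation step; everything preceding it is formal. The two points to be careful about are that the generating family must be taken to be a Boolean algebra (so that it is a $\pi$-system and Dynkin's theorem applies), and that the passage through limits must be justified by the continuity of conditional expectation as an $L^2$-contraction rather than by attempting to interchange $E(\cdot\mid\mathcal{B})$ with countable operations by hand.
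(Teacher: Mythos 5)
Your proof is correct and is essentially the paper's own argument: the paper likewise takes a countable dense family $\{D_i\}$ in $\bar{\mathcal{A}}$, lets $\bar{\mathcal{B}}$ be generated by the level sets $E(D_i\mid\mathcal{B})^{-1}(p,q)$, obtains $E(D_i\mid\bar{\mathcal{B}})=E(D_i\mid\mathcal{B})$ by the tower property, and extends to all of $\bar{\mathcal{A}}$ via the $L^2$-continuity of conditional expectation --- precisely your alternative density phrasing, of which the $\pi$--$\lambda$ packaging is an immaterial variant. One cosmetic caveat: under the paper's definition of separability (separable measure algebra) a countable Boolean subalgebra $\mathcal{R}$ generates $\bar{\mathcal{A}}$ only up to null sets, but this is harmless since conditional expectations are unaffected by null-set perturbations.
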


\proof
  We use the fact that $\bar{\mathcal{A}}$ is a
  separable metric space with the distance $d(A,B)=\mu(A\triangle B)$. Let
  $W=\{D_1,D_2,\dots\}$ be a countable dense subset of $\bar{\mathcal{A}}$
  with the previous distance. Let $C_{p,q}^i=E(D_i\mid \cB)^{-1}(p,q)$,
where $p<q$ are rational numbers. Clearly, $E(D_i\mid \cB)$ is a
$\cB_i$-measurable function, where $\cB_i=\langle C_{p,q}^i\mid
p<q\in \bQ\rangle$. Obviously, $E(D_i\mid \overline{\cB})=
E(D_i\mid \cB)$ for any $i\geq 1$, where
$\overline{\cB}=\langle \cB_i\mid i=1,2,\dots\rangle\,.$
Now observe that $E(D_i\mid \overline{\cB})\stackrel {L_2}{\to}
 E(D,\overline{\cB})$
if $D_i\to D$. Hence for any $D\in \overline{\cA}$, $E(D\mid\overline{\cB})=
E(D\mid\cB)$.\qed
\begin{lemma}\label{inv1} Let $A\subseteq[k]$ be a subset and assume that
  there are atomless separable $\sigma$-algebras 
$d(\{i\})\subset\sigma(\{i\})$\,,$i\in A$
  such that $d(\{i\})^\pi=d(\{i^\pi\})$ for every $i\in A$ and $\pi\in
  S_A$. Then $S_A$ acts freely on $\langle d(\{i\})|i\in A\rangle$.
\end{lemma}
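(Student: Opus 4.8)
The plan is to realize $\langle d(\{i\})\mid i\in A\rangle$ as a Lebesgue cube $[0,1]^A$ on which $S_A$ acts by permuting coordinates, and then to pull back the standard fundamental domain for that coordinate-permutation action. Concretely, since each $d(\{i\})$ is atomless and separable, the measure-algebra isomorphism theorem gives a measure isomorphism of $(\xo^k,d(\{i\}),\muo)$ with $([0,1],\mathrm{Leb})$, i.e.\ a $d(\{i\})$-measurable function $g_i\colon\xo^k\to[0,1]$ with $(g_i)_*\muo=\mathrm{Leb}$. Two ingredients make this work: (a) the algebras $d(\{i\})$, $i\in A$, are mutually independent, since $\sigma(\{i\})^*$ is the trivial $\sigma$-algebra and hence the Total Independence Lemma (Lemma \ref{totalindep}), equivalently Fubini, forces $\muo(\bigcap_i g_i^{-1}(B_i))=\prod_i\mathrm{Leb}(B_i)$; and (b) the family $(g_i)$ can be chosen \emph{pointwise} $S_A$-equivariant, which is what upgrades an almost-everywhere disjointness into the exact disjointness demanded by the definition of a free action.

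For the equivariant choice I fix $i_0\in A$ and represent the isomorphism $g_{i_0}$ by an ultralimit function depending only on the $i_0$-coordinate (possible by Theorem \ref{tetel2} together with the fact that $\sigma(\{i_0\})$-measurable ultralimit functions factor through $p_{\{i_0\}}$). Every $\pi\in S_A$ fixing $i_0$ leaves the $i_0$-coordinate of each point unchanged, so such a representative satisfies $g_{i_0}(x^\pi)=g_{i_0}(x)$ pointwise for all $\pi\in S_{A\setminus\{i_0\}}$. Because $S_A$ acts transitively on $A$, for each $i\in A$ I choose $\pi_i\in S_A$ with $i_0^{\pi_i}=i$ and set $g_i:=g_{i_0}\circ(\,\cdot\,)^{\pi_i^{-1}}$; the stabilizer-invariance just noted makes this independent of the choice of $\pi_i$, and the hypothesis $d(\{i\})^\pi=d(\{i^\pi\})$ guarantees that $g_i^{-1}(B)=(g_{i_0}^{-1}(B))^{\pi_i}\in d(\{i_0\})^{\pi_i}=d(\{i\})$, so $g_i$ is genuinely $d(\{i\})$-measurable. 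Assembling $G:=(g_i)_{i\in A}\colon\xo^k\to[0,1]^A$, independence gives $G_*\muo=\mathrm{Leb}$ on $[0,1]^A$, and the construction makes $G$ pointwise equivariant, $G(x^\pi)=G(x)^\pi$ for every $\pi\in S_A$.

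It then remains to pull back the fundamental domain. Fix an ordering $j_1,\dots,j_m$ of $A$ (with $m=|A|$) and put $D=\{t\in[0,1]^A\mid t_{j_1}<t_{j_2}<\dots<t_{j_m}\}$, so that $\mathrm{Leb}(D)=1/m!=1/|S_A|$ and the translates $D^\pi$ ($\pi\in S_A$) are \emph{exactly} pairwise disjoint, since distinct permutations impose incompatible strict orderings of the coordinates. Setting $S:=G^{-1}(D)$ yields a set measurable in $\langle d(\{i\})\mid i\in A\rangle$ with $\muo(S)=\mathrm{Leb}(D)=1/|S_A|$, and by pointwise equivariance $S^{\pi_1}\cap S^{\pi_2}=G^{-1}(D^{\pi_1}\cap D^{\pi_2})=\emptyset$ whenever $\pi_1\neq\pi_2$ in $S_A$. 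This exhibits $S$ as a witness that $S_A$ acts freely on $\langle d(\{i\})\mid i\in A\rangle$. The delicate point throughout is (b): one must commit to a representative of $g_{i_0}$ that is invariant under $S_{A\setminus\{i_0\}}$ on the nose, not merely up to a null set, so that the disjointness of the $S^\pi$ is genuine and not just almost sure.
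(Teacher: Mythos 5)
Your proof is correct and takes essentially the same route as the paper's: both realize $\langle d(\{i\})\mid i\in A\rangle$ via a coordinatewise, pointwise $S_A$-equivariant map $G$ into $[0,1]^A$ (one measure-algebra isomorphism on the single-coordinate factor, transported to all coordinates of $A$), and both pull back the exact fundamental domain $\{t\mid t_{j_1}<t_{j_2}<\dots<t_{j_m}\}$ of measure $1/|A|!$ to witness freeness. The only difference is bookkeeping: the paper extracts a common factor algebra $\mathcal{A}$ with $p_{\{i\}}^{-1}(\mathcal{A})=d(\{i\})$ directly from the invariance hypothesis, whereas you fix $i_0$, transport $g_{i_0}$ by transitivity, and make the independence step (via Lemma \ref{totalindep}/Fubini) and the pointwise-equivariant choice of representative explicit where the paper leaves them implicit.
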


\proof The permutation invariance implies that there is a
  $\sigma$-algebra $\mathcal{A}$ on $X$ such that
  $P_{\{i\}}^{-1}(\mathcal{A})=d(\{i\})$ for every $i\in A$.
  Let $F:\xo\to [0,1]$ be a $\mathcal{A}$-measurable measure 
preserving map. Now
  we can define the map $G:\xo^A\to [0,1]^{A}$ by
  $$G(x_{i_1},x_{i_2},\dots,x_{i_{|A|}}):=
(F(x_{i_1}),F(x_{i_2}),\dots,F(x_{i_{|A|}})).$$
Let us introduce $S':=\{(x_1,x_2,\dots,x_r)|x_1<x_2<\dots< x_r\}\subset[0,1]^A$
and $S:=G^{-1}(S')$. Clearly $\muo^A(S)=1/|A|!$ 
and $S^\pi\cap S^\rho=\emptyset$
for every two different elements $\pi\neq\rho$ in $S_A$.
\qed

\begin{lemma}\label{sep2} Let $k$ be a natural number and assume that for every
  $A\subseteq[k]$ there is a separable $\sigma$-algebra $c(A)$ in
  $\sigma(A)$. Then for every $A\subseteq[k]$ there is a separable
  $\sigma$-algebra $d(A)$ in $\sigma(A)$ with $c(A)\subseteq d(A)$ such that

\begin{enumerate}
\item  $E(R|\langle d(B)|B\in A^*\rangle)=E(R|\sigma(A)^*)$ whenever $R\in
  d(A)$.
\item $d(A)^\pi=d(A^\pi)$ for every element $\pi\in S_k$.
\item $d(B)\subseteq d(A)$ whenever $B\subseteq A$
\end{enumerate}
\end{lemma}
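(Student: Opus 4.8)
The plan is to build the algebras $d(A)$ by an iterative closure (an $\omega$-step process) rather than by a single induction on $|A|$, because the three requirements are coupled: property (3) forces $d(B)\subseteq d(A)$ whenever $B\subseteq A$, so enlarging a small algebra to fix property (1) at level $|A|$ creates new elements that must themselves satisfy property (1) at lower levels, and symmetrizing to obtain property (2) again adds new elements. First I would replace the given data by a monotone, $S_k$-equivariant starting point: set $c_0(A)=\langle c(B)^\pi \mid B\subseteq[k],\ \pi\in S_k,\ B^\pi\subseteq A\rangle$. Since $c(B)\subseteq\sigma(B)$ gives $c(B)^\pi\subseteq\sigma(B^\pi)\subseteq\sigma(A)$, this algebra lies in $\sigma(A)$, is separable, contains $c(A)$, and by construction already satisfies (2) and (3).

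The engine of the iteration is Lemma \ref{sep1}. At stage $n$, for each $A$ I apply it with $\mathcal{A}=\sigma(A)$, $\mathcal{B}=\sigma(A)^*$ and $\bar{\mathcal{A}}=c_n(A)$, obtaining a separable $\bar{\mathcal{B}}_A\subseteq\sigma(A)^*$ with $E(R\mid\sigma(A)^*)=E(R\mid\bar{\mathcal{B}}_A)$ for every $R\in c_n(A)$. The key point is that $\sigma(A)^*=\langle\sigma(B)\mid B\in A^*\rangle$, so the countably many generators of the separable algebra $\bar{\mathcal{B}}_A$ are, up to measure zero, Boolean combinations of sets each lying in a single $\sigma(B)$ with $B\in A^*$; collecting these sets and grouping them by $B$ yields separable algebras $e_{A,B}\subseteq\sigma(B)$ with $\bar{\mathcal{B}}_A\subseteq\langle e_{A,B}\mid B\in A^*\rangle$. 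I then feed each $e_{A,B}$ into the smaller algebra $c(B)$ and re-apply the symmetrization and monotonization of the first paragraph, producing $c_{n+1}(A)\supseteq c_n(A)$ that is again separable, monotone and equivariant. Finally I set $d(A)=\langle c_n(A)\mid n\geq 0\rangle$, noting that properties (2) and (3) survive the increasing union.

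To verify property (1) at the limit, observe that for $R\in c_n(A)$ one has the sandwich $\bar{\mathcal{B}}_A\subseteq\langle d(B)\mid B\in A^*\rangle\subseteq\sigma(A)^*$, where the first inclusion holds because $\bar{\mathcal{B}}_A\subseteq\langle e_{A,B}\mid B\in A^*\rangle\subseteq\langle d(B)\mid B\in A^*\rangle$. Since $E(R\mid\sigma(A)^*)$ is then $\langle d(B)\mid B\in A^*\rangle$-measurable and reproduces the integral of $R$ over every set in that algebra, it coincides with $E(R\mid\langle d(B)\mid B\in A^*\rangle)$ by the defining property of conditional expectation, giving (1) for all $R\in\bigcup_n c_n(A)$. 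For general $R\in d(A)$ I approximate $R$ in $L^2$ by elements of $\bigcup_n c_n(A)$ and pass to the limit, using that conditional expectation is an $L^2$-contraction.

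The main obstacle is the bookkeeping that makes the $\omega$-iteration actually close: I must run the Lemma \ref{sep1} step simultaneously at all levels and all $A$ at each stage, so that the new elements produced both by enlargement and by symmetrization are swept up at the next stage, and I must check that feeding the $e_{A,B}$ back into the smaller algebras while re-symmetrizing never leaves $\sigma(A)$ nor destroys monotonicity or equivariance. The one genuinely non-formal ingredient is the decomposition of the separable algebra $\bar{\mathcal{B}}_A$ into single-coordinate pieces $e_{A,B}$ inside the join $\langle\sigma(B)\mid B\in A^*\rangle$; this relies on separability to reduce the generators to countably many sets each sitting in a single $\sigma(B)$.
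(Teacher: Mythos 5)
Your proof is correct, but it is architected differently from the paper's. Both arguments rest on the same two ingredients --- Lemma \ref{sep1}, and the observation that a separable sub-$\sigma$-algebra of $\sigma(A)^*=\langle\sigma(B)\mid B\in A^*\rangle$ is contained (modulo null sets) in a join of separable sub-$\sigma$-algebras of the individual $\sigma(B)$'s --- but the paper runs a single downward recursion on $|A|$ instead of your $\omega$-step closure. There, Lemma \ref{sep1} is applied exactly once per $A$: at level $t=|A|$ one extracts witnesses $d'(A,B)\subseteq\sigma(B)$, $B\in A^*$, for the already-finished algebra $d'(A)$, and these witnesses, symmetrized over $S_k$, are frozen into the definition of $d'(B)$ at level $t-1$; finally $d(A):=\langle d'(B)\mid B\subseteq A\rangle$. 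The price of that one-pass scheme is precisely the coupling you identified: $d(A)$ strictly exceeds the algebra $d'(A)$ that was fed into Lemma \ref{sep1}, so property (1) needs an extra argument, which the paper supplies by approximating $R\in d(A)$ by finite disjoint unions of intersections $\bigcap_{B\subseteq A}T_B$ with $T_B\in d'(B)$ and using the pull-out identity $E(T_A\chi_Q\mid\cdot)=E(T_A\mid\cdot)\chi_Q$ with $Q=\bigcap_{B\subsetneq A}T_B$ measurable in the conditioning algebra, followed by the tower property. Your iteration buys the avoidance of exactly this computation: since each $c_n(A)$ already contains $\langle c_n(B)\mid B\subseteq A\rangle$ and is symmetrized, the stage-$n$ application of Lemma \ref{sep1} covers all mixed product sets at once, and (1) follows at the limit from the plain tower property plus $L^2$-continuity of conditional expectation --- at the cost of countably many rounds of bookkeeping instead of $k$ levels. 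One cosmetic point: your single-coordinate decomposition of $\bar{\mathcal{B}}_A$ into the pieces $e_{A,B}$ is only guaranteed modulo null sets (or exactly, if you collect the countably many constituent sets from exhausting sequences of finite Boolean combinations of generators), but since conditional expectations are defined up to null sets and any function measurable with respect to the completion of $\langle d(B)\mid B\in A^*\rangle$ admits a genuinely measurable version, this does not affect the argument; the paper glosses the same point with the phrase ``countable expression.''
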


\proof First we construct algebras $d'(A)$
  recursively. Let $d'([k])$ be
  $\langle c([k])^\pi|\pi\in S_k\rangle$.
Assume that we have already constructed the algebras $d'(A)$ for
  $|A|\geq t$. Let $A\subseteq[k]$ be such that $|A|=t$. By Lemma \ref{sep1}
  we can see that there exists a separable subalgebra $d'(A)^*$ of
$\sigma(A)^*$ such that
  $E(R|\sigma(A)^*)=E(R|d'(A)^*)$ for every $R\in d'(A)$. 
Since $\sigma(A)^*$ is
  generated by the algebras $\{\sigma(B)|B\in A^*\}$ we have that
  every element of $\sigma(A)^*$ is a countable expression of some
  sets in these algebras. This implies that any separable sub $\sigma$-algebra
  of $\sigma(A)^*$ is generated by separable sub
  $\sigma$-algebras of the algebras $\sigma(B)$ where $B\in A^*$. 
In particular we can choose
 separable $\sigma$-algebras $d'(A,B)\supset c(B)$ in
$\sigma(B)$ for every $B\in A^*$ such
 that $\langle d'(A,B)|B\in A^*\rangle\supseteq d(A)^*$. For a set
 $B\subseteq[k]$ with $|B|=t-1$ we define $d'(B)$ as the $\sigma$-algebra
 generated by all the algebras in the form of $d'(C,D)^\pi$,
where $\pi\in S_k$ , $D^\pi=B$ ,
 $|C|=|D|+1$ and $D\subseteq C$. Since
 $d'(C,D)^\pi\subseteq\sigma(D)^\pi=\sigma(B)$ we have that
 $d'(B)\subseteq\sigma(B)$. Furthermore we have that
 $d'(B)^\pi$=$d'(B^\pi)$ for every $\pi\in S_k$.

Now let $d(A):=\langle d'(B)~|~B\subseteq A\rangle$. the second
requirement in the lemma is trivial by definition. We prove the
first one. The elements of $d(A)$ can be approximated by finite
unions of intersections of the form $\bigcap_{B\subseteq A}T_B$
where $T_B\in d'(B)$ and so it is enough to prove the statement if
$R$ is such an intersection. Let $Q=\bigcap_{B\subset A,B\neq
A}T_B$. Now

$$E(R|\langle d(B)|B\in A^*\rangle)=E(R|\langle d'(B)|B\subset
A,B\neq A\rangle)\,.$$ By the basic property of the conditional expectation
(see Appendix) :
$$E(R|\langle d'(B)|B\subset
A,B\neq A\rangle)=E(T_A|\langle d'(B)|B\subset A,B\neq A\rangle)
\chi_Q=E(T_A|\sigma(A)^*)\chi_Q=$$$$=E(R|\sigma(A)^*).$$
\qed

\medskip
\noindent{\bf Proof of Theorem \ref{reali}}~~ We construct the
algebras $l(A)$ in the following steps. For each non-empty subset
$A\subseteq[k]$ we choose an atomless separable $\sigma$-algebra
$c(A)\subseteq\sigma(A)$ containing a $\sigma(A)^*$-random
$r$-partition for every $r$. We also assume that $\bH\in c([k])$. Applying 
Lemma \ref{sep2} for the previous
system of separable
  $\sigma$-algebras $c(A)$ we obtain the $\sigma$-algebras $d(A)$. 
By Lemma \ref{inv1} and the permutation invariance property of the
previous lemma, $S_{[r]}$ acts freely on
  $d([r])^*$. Hence
using Lemma \ref{incom2}, for every $\emptyset\neq A\in[k]$ we can choose an 
independent
 complement $l([r])$ for $d([r])^*=\langle d(B)|B\in
  [r]^*\rangle$ in $d([r])$ such 
that $l([r])$ is element-wise invariant under the
  action of $S_{[r]}$.  The algebras $l([r])$ are independent from 
$\sigma([r])^*$
since $\mu(R)=E(R|d([r])^*)=E(R|\sigma([r])^*)$ for every $R\in l([r])$. 
Now we define $l(A)$, where $|A|=r$ by $l(A)=l([r])^\pi$ for some
$\pi \in S_k$, $\pi([r])=A$. Note that $l(A)$ does not depend on the choice
of $\pi$. By Lemma \ref{measurealgebra} of the Appendix we have
maps $F_{[r]}:X\to [0,1]$ such that $F^{-1}$ defines a measure
algebra isomorphism between $\cM([0,1],\cB,\lambda)$ and $\cM(\xo,l[r],\mu)$.
Let $F_A=\pi^{-1}\circ F_{[r]}$, where $\pi$ maps $[r]$ to $A$. Again,
$F_{[r]}$ does not depend on the particular choice of the
permutation $\pi$. \qed

\vskip 0.2in
\noindent
Now let $\mathcal{S}=\{l(A),F_A\}_{\emptyset\neq A\subseteq [k]}$ be
a separable realization of $\xo$ and $k<n$ be a natural number.
Let $B\subset [n], |B|=r\leq k$ and $\pi\in S_n$ be a permutation that
maps $[r]$ to $B$. Let $l(B)\subseteq \xo^n$ be defined as
$l([r])^{\pi}$. If we choose a $\pi'\in S_n$ that also maps
$[r]$ into $B$ then $(\pi)^{-1}\circ \pi$ permutes $[r]$ hence fixes
$l([r])$. Therefore $l(B)$ does not depend on the choice of $\pi$.
Let $F_B$ defined as $\pi^{-1}\circ
 F_{[r]}\,.$
We have the following lemma.
\begin{lemma} 
The system $\widetilde{\mathcal{S}}=\langle l(B), F_B \rangle_{\emptyset\neq
  B\subseteq [n],|B|\leq k  }$ is a 
separable realization of degree $k$ on $X^n$.
If $B\subset [n], |B|=r\leq k$ and $f:[r]\to B$ is a bijection then
let $p_B:\xo^n\to \xo^B$ the natural projection and $p_f: \xo^{[r]}\to \xo^B$
is the natural isomorphism. Then $l(B)=p_B^{-1}\left(p_f ( l([r]))\right)$.
\end{lemma}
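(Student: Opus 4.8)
The plan is to verify, one at a time, the five defining properties of a separable realization of degree $k$ (Definition \ref{sepre}), now read on $\xo^n$ with the index sets $B$ ranging over nonempty subsets of $[n]$ of size at most $k$. The guiding principle is that, up to the measure-preserving coordinate permutation $\pi$ and the projection $p_B$, the algebra $l(B)$ and the function $F_B$ are literal copies of $l([r])$ and $F_{[r]}$, so each statement on $\xo^n$ reduces to the corresponding property of the original realization $\mathcal{S}$ transported to the $r$-fold ultraproduct. Throughout I write $\ell_r$ for the image of $l([r])$ under the projection $p_{[r]}:\xo^k\to\xo^{[r]}$; since $l([r])\subseteq\sigma([r])$ depends only on the first $r$ coordinates, $p_{[r]}$ is a measure-algebra isomorphism between $(\xo^k,l([r]),\muo^k)$ and $(\xo^{[r]},\ell_r,\muo^{[r]})$, and it likewise identifies $\sigma([r])^*$ with its analogue on $\xo^{[r]}$. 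In the displayed identity of the lemma, $l([r])$ is to be read as this $\xo^{[r]}$-version $\ell_r$. Because $l(B)$ has already been shown not to depend on the chosen $\pi$, every verification below is independent of that choice.

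The substantive point is the independence required in property (1), and I expect this bookkeeping to be the main obstacle — specifically, the claim that the coordinates in $[n]\setminus B$ are irrelevant. Fix $B\subseteq[n]$ with $|B|=r\le k$ and a bijection $f:[r]\to B$. First I would observe that $\sigma(B)^*$ computed in $\xo^n$ is the $p_B$-pullback of $\sigma(B)^*$ computed in $\xo^B$: for each $C\subset B$ with $|C|=r-1$ the projection factors as $p_C=q_C\circ p_B$ with $q_C:\xo^B\to\xo^C$, whence $\sigma(C)=p_B^{-1}(\sigma(C)_{\xo^B})$, and since preimages commute with generating a $\sigma$-algebra we get $\sigma(B)^*=p_B^{-1}(\sigma(B)^*_{\xo^B})$. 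Now $l(B)=p_B^{-1}(p_f(\ell_r))$, and by property (1) of $\mathcal{S}$ (carried along $p_{[r]}$ and then along the isomorphism $p_f:\xo^{[r]}\to\xo^B$) the algebra $p_f(\ell_r)$ is independent of $\sigma(B)^*_{\xo^B}$ inside $\xo^B$. Since $p_B$ is measure preserving and preimage commutes with intersection, independence is preserved under $p_B^{-1}$, so $l(B)$ is independent of $\sigma(B)^*$ in $\xo^n$; the inclusion $l(B)\subseteq\sigma(B)$ is immediate from $l(B)=p_B^{-1}(p_f(\ell_r))$. This is exactly where the factorization $\sigma(B)^*=p_B^{-1}(\sigma(B)^*_{\xo^B})$ does the work, reducing the whole independence statement to one living entirely inside $\xo^B$.

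The remaining properties are bookkeeping with the coordinate action of $S_n$, using the rule $(S^{\alpha})^{\beta}=S^{\beta\alpha}$. For (2), writing $l(B)=l([r])^{\sigma_B}$ with $\sigma_B([r])=B$, one gets $l(B)^\pi=l([r])^{\pi\sigma_B}$, and $\pi\sigma_B$ carries $[r]$ to $B^\pi$, so by well-definedness this equals $l(B^\pi)$. For (3), given $\pi\in S_B$ the conjugate $\rho=\sigma_B^{-1}\pi\sigma_B$ lies in $S_{[r]}$; for $S=T^{\sigma_B}\in l(B)$ property (3) of $\mathcal{S}$ gives $T^\rho=T$, and then $S^\pi=T^{\pi\sigma_B}=T^{\sigma_B\rho}=(T^\rho)^{\sigma_B}=T^{\sigma_B}=S$. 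Property (4) follows by composing three measure-algebra isomorphisms — the pullback $p_B^{-1}$ onto $l(B)$, the coordinate isomorphism $p_f^{-1}$, and the original $F_{[r]}$ — which is precisely what the definition $F_B=\sigma_B^{-1}\circ F_{[r]}$ records, and the same composition yields the equivariance (5) by the identical computation used in the proof of Theorem \ref{reali}. Finally, the displayed identity $l(B)=p_B^{-1}(p_f(l([r])))$ is the reconciliation of the abstract definition $l(B)=l([r])^{\sigma_B}$ with its concrete projection description: the action of $\sigma_B$ relabels the first $r$ coordinates onto $B$ through $f=\sigma_B|_{[r]}$, which is exactly $p_B^{-1}\circ p_f$ applied to the coordinate data of $\ell_r$. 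Collecting these checks establishes that $\widetilde{\mathcal{S}}$ is a separable realization of degree $k$ on $X^n$.
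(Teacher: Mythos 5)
Your proposal is correct and follows the same (indeed the only natural) route: the paper states this lemma without any written proof, treating it as immediate from the construction $l(B)=l([r])^{\pi}$, $F_B=\pi^{-1}\circ F_{[r]}$ and the preceding well-definedness remark, and your verification supplies exactly those routine transport arguments. In particular, your key step — factoring $\sigma(B)^{*}=p_B^{-1}\bigl(\sigma(B)^{*}_{\xo^B}\bigr)$ so that the independence in property (1) reduces to a statement inside $\xo^B$, with the remaining properties handled by the relation $(S^{\alpha})^{\beta}=S^{\beta\alpha}$ and the elementwise $S_{[r]}$-invariance of $l([r])$ — is precisely the bookkeeping the authors leave implicit.
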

By Lemma \ref{totalindep} and Lemma \ref{fremlin} we have
the following lifting lemma as well.
\begin{lemma}
\label{lifting}
The map $F:X^k\to [0,1]^{2^k-1}$, $F=\oplus_{\emptyset\neq A\subseteq [k]}F_A$
defines an 
isomorphism between the measure algebra of
 $\cM(\xo^k,\langle l(A)\mid\emptyset\neq A\subseteq [k]\rangle,\mu)$ and 
the Lebesgue measure
algebra $\cM([0,1]^{2^k-1},\cB,\lambda)$. Similarly, $\widetilde{F}:\xo^n\to 
[0,1]^{\sum^k_{i=1}(\stackrel{n}{i})}$, $\widetilde{F}=
\oplus_{\emptyset\neq B\subseteq [n], |B|\leq k}F_B$
defines an 
isomorphism between the measure algebra 
 $\cM(\xo^n,\langle l(B)\mid\emptyset\neq 
B\subseteq [n],|B|\leq k \rangle,\mu)  $ and 
the Lebesgue measure
algebra $\cM([0,1]^{\sum^k_{i=1}(\stackrel{n}{i})},\cB,\lambda)$. \end{lemma}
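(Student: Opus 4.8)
The plan is to realize the combined map $F=\oplus_{\emptyset\neq A\subseteq[k]}F_A$ as the product of the individual measure-algebra isomorphisms $F_A$, and to show that the only additional input needed is that the family $\{l(A)\}$ is \emph{mutually} independent. Once mutual independence is established, the join $\langle l(A)\mid\emptyset\neq A\subseteq[k]\rangle$ behaves exactly like a product of $2^k-1$ copies of the Lebesgue algebra, and $F$ matches this product structure coordinate by coordinate. I would carry this out in three steps.

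First I would establish mutual independence of $\{l(A)\mid\emptyset\neq A\subseteq[k]\}$. By property (1) of a separable realization each $l(A)$ lies in $\sigma(A)$ and is independent from $\sigma(A)^*$, so $E(S\mid\sigma(A)^*)=\mu(S)$ is constant for every $S\in l(A)$. Enumerating the nonempty subsets of $[k]$ as $A_1,\dots,A_r$ with $r=2^k-1$ and choosing $S_i\in l(A_i)$, Lemma \ref{totalindep} applies verbatim and gives
$$\mu(S_1\cap S_2\cap\dots\cap S_r)=\mu(S_1)\mu(S_2)\dots\mu(S_r)\,.$$
Since each $l(A_i)$ contains the whole space $\xo^k$, taking $S_i=\xo^k$ for the indices one wishes to omit shows that the product formula holds for every subcollection; hence $\{l(A)\}$ is a mutually independent family.

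Next I would pass to $[0,1]^{2^k-1}$. By property (4) each $F_A$ induces an isomorphism between $\cM(\xo^k,l(A),\mu)$ and $\cM([0,1],\cB,\lambda)$, so the preimages $F_A^{-1}(J)$ of intervals $J$ generate $l(A)$ up to nullsets and satisfy $\mu(F_A^{-1}(J))=\lambda(J)$. For a box $\prod_A J_A\subseteq[0,1]^{2^k-1}$ its $F$-preimage is $\bigcap_A F_A^{-1}(J_A)$, and mutual independence together with measure preservation on each coordinate gives
$$\mu\Big(\bigcap_A F_A^{-1}(J_A)\Big)=\prod_A\lambda(J_A)=\lambda\Big(\prod_A J_A\Big)\,.$$
Thus $F$ pushes $\mu$ forward to Lebesgue measure on boxes, hence on all of $\cB$, while the preimages of boxes generate $\langle l(A)\rangle$ up to nullsets (each $F_A^{-1}(J)$ being a preimage of a box with the remaining coordinates equal to $[0,1]$). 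This is precisely the situation of Lemma \ref{fremlin}: a product of component isomorphisms of independent Lebesgue algebras. Since a measure-preserving homomorphism of measure algebras is automatically injective, the generation statement furnishes surjectivity and upgrades $F$ to the desired measure-algebra isomorphism.

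Finally, the second assertion follows by the identical argument. By the preceding lemma the system $\widetilde{\mathcal{S}}=\langle l(B),F_B\rangle_{\emptyset\neq B\subseteq[n],\,|B|\leq k}$ is a separable realization of degree $k$ on $\xo^n$, so every $l(B)$ sits in $\sigma(B)$ and is independent from $\sigma(B)^*$. The proof of Lemma \ref{totalindep} uses only that the index sets are distinct nonempty subsets with constant conditional expectation over their starred algebras; ordering the $B$'s by decreasing size (so that $B_i\cap B_j\subsetneq B_i$ for $j>i$, and hence $\sigma(B_i\cap B_j)\subseteq\sigma(B_i)^*$) it applies over $\xo^n$ to this subfamily and again yields mutual independence. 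Lemma \ref{fremlin} then shows $\widetilde{F}=\oplus_{B}F_B$ is an isomorphism of $\cM(\xo^n,\langle l(B)\rangle,\mu)$ with $\cM([0,1]^{\sum^k_{i=1}(\stackrel{n}{i})},\cB,\lambda)$. The main obstacle in the whole argument is the \emph{full} (not merely pairwise) mutual independence of the family, which is exactly what Lemma \ref{totalindep} is engineered to supply; the remaining verification that the pushforward equals Lebesgue measure and that the boxes generate the join is routine.
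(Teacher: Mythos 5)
Your proof is correct and takes essentially the same route as the paper, which derives the lemma in one line from Lemma \ref{totalindep} (mutual independence of the algebras $l(A)$, using that $E(S\mid\sigma(A)^*)$ is constant for $S\in l(A)$) combined with Lemma \ref{fremlin}. Your expansion — including the check that the argument of Lemma \ref{totalindep} applies verbatim to the family $\{l(B)\mid\emptyset\neq B\subseteq[n],\ |B|\leq k\}$ ordered by decreasing size — just fills in the steps the paper leaves implicit.
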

\section{Applications for Hypergraphs}
\subsection{Hypergraph homomorphisms and convergence}
Recall that a $k$-uniform {\bf hypergraph} $H$ is a system of $k$ element
subsets ({\bf edges}) denoted by $E(H)$ of a set $V$ ({\bf node set}).
 A $k$-uniform hypergraph can be represented as 
a subset $S_H\subset V^k$ such that
$(x_1,x_2,\dots x_k)\in S$ if and only if
$\{x_1, x_2, \dots, x_k\}\in E(H)$. 
Note that $S_H$ is invariant under the
action of $S_k$ on $V^k$. For any hypergraph
 we have an underlying $(k-1)$-dimensional simplicial
complex $\Sigma(H)$ consisting of the subsets of the $k$-edges.

\noindent
Suppose that $K$ is a finite $k$-uniform hypergraph on the node set
$[n]:=\{1,2,\dots,n\}$ and $H$ is a $k$-uniform hypergraph on the node set
$V$. Then
a map
$f:[n]\to V$ is a {\bf homomorphism} if $f$ maps edges to edges.
If $H$ is finite then $hom(K,H)$ is the number of homomorphism from $K$
to $H$. Denote by $t(K,H)$ the probability that a random map
$g:[n]\to V$ is a $(K,H)-homomorphism$, that is
$$t(K,H)=\frac{hom(K,H)}{|V|^n}\,.$$
If $H$ is not necessarily finite then $T(K,H)\subset V^n$ denotes
the $(K,H)$-{\bf homomorphism set}, where
$(x_1,x_2,\dots,x_n)\in T(K,H)$ if
$1\to x_1, 2\to x_2,\dots, n\to x_n$ defines a homomorphism.
Clearly $|T(K,H)|=hom(K,H)$. Note that
$$T(K,H)=\bigcap_{E\in E(K)} p_E^{-1}\left(p_f (S_H)\right)\,,$$ where
$f:[k]\to B$ is a bijection (see Lemma \ref{lifting}).

\begin{definition}
We say that a sequence of $k$-uniform hypergraphs $\{H_i\}^\infty_{i=1}$
is {\bf convergent} if for every fixed finite $k$-uniform hypergraph $K$
$lim_{i\to\infty} t(K,H)$ exists.
\end{definition}
Let $\{X_i\}^\infty_{i=1}$ be finite sets and $H_i\subset X_i^k$ be
$k$-uniform directed hypergraphs, that is
a sequence of $S_k$-invariant sets
 $S_{H_i}\subset X_i^k$ is given. As in the Section \ref{sec1}
, let $X$ be the ultralimit of the sets $X_i$. Then $\bH:=
[\{S_{H_i}\}^\infty_{i=1}]\subset \cP(X^k)$ is the {\bf ultralimit
  hypergraph},
an $S_k$-invariant set corresponding to an actual hypergraph on the node set
$X$. We can define its homomorphism set as
$$T(K,\bH):=\bigcap_{E\in E(K)} p_E^{-1}\left( p_f (\bH)\right)\,.$$
Then
$$T(K,\bH)=[\{T(K,H_i)\}^\infty_{i=1}]\subset \cP(\xo^n)\,.$$
Clearly, $\muo^n(T(K,\bH))=\limo t(K,H_i)$, where
$\muo^n$ denotes the ultralimit measure on $\xo^n$. Thus if
$\{H_i\}^\infty_{i=1}$ is a convergent sequence of hypergraphs 
then:
$$\muo^n(T(K,\bH))=
\lim_{i\to\infty} t(K,H_i)\,.$$

\subsection{The Hypergraph Removal Lemma}

\begin{lemma}[Infinite Removal Lemma]\label{infrem} 
Let $\bH$ be an $S_k$-invariant measurable subset of
  $\xo^k$. Then there exists an $S_k$-invariant nullset $\bI\subseteq\bH$ such
 that for every
  $k$-uniform hypergraph $K$ either $T(K,\bH\setminus \bI)=\emptyset$ or
  $|T(K,\bH\setminus \bI)|>0$.
\end{lemma}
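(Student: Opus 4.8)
The plan is to exploit the measure-theoretic machinery developed in the preceding sections, particularly the Separable Realization (Theorem \ref{reali}) and the Total Independence Lemma (Lemma \ref{totalindep}), to reduce the infinite removal statement to a density argument on a standard Lebesgue space. The key observation is that the conclusion is a dichotomy: for each finite $k$-uniform hypergraph $K$, the homomorphism set $T(K,\bH\setminus\bI)$ is either empty or has positive measure. Since there are only countably many finite hypergraphs $K$ (up to isomorphism), it suffices to construct, for each $K$ separately, an $S_k$-invariant nullset removal that ``cleans up'' the positive-but-measure-zero homomorphism sets, and then take the union of these countably many nullsets, invoking the fact (Proposition in Section \ref{sec1}) that $\cN$ is closed under countable unions.

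First I would fix $\bH$ and apply Theorem \ref{reali} to obtain a separable realization $\{l(A),F_A\}$ of degree $k$ in which $\bH$ is measurable. The map $F=\oplus_{\emptyset\neq A\subseteq[k]}F_A$ of Lemma \ref{lifting} then transports the measure algebra of $\langle l(A)\rangle$ isomorphically onto the Lebesgue measure algebra of $[0,1]^{2^k-1}$, so $\bH$ corresponds to an honest Lebesgue-measurable set $W$ in a finite-dimensional cube. The plan is to define $\bI$ as the $S_k$-symmetrization of the pullback of the set of points of $W$ that have Lebesgue density strictly less than $1$ along the relevant coordinate directions; the Lebesgue Density Theorem guarantees this pullback is a nullset. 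The crucial point, for which the Integration Rule and Total Independence are the right tools, is that $T(K,\bH)$ decomposes as an intersection $\bigcap_{E\in E(K)} p_E^{-1}(p_f(\bH))$ of sets living in independent $\sigma$-algebras $\sigma(E)$, so that under the realization the measure of $T(K,\bH)$ factors and becomes an integral over the cube of a product of the indicator functions of the copies of $W$; positivity of this integral is then detected exactly by positivity of density at the ``typical'' point, which is what the density-point removal preserves.

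The hard part will be verifying that removing the low-density points does not destroy any homomorphism set that genuinely had positive measure, while simultaneously eliminating every homomorphism set that had measure zero but was nonempty. The subtlety is that $T(K,\bH\setminus\bI)$ is an intersection of several pulled-back copies of $\bH\setminus\bI$, and one must argue that if all the edge-constraints can be satisfied on a positive-measure set then they remain satisfiable after passing to density points in each coordinate block; conversely, one must ensure that a single ``sparse'' point configuration contributing only measure zero is actually killed. I expect the management of the $S_k$-invariance to require care: the density set must itself be symmetrized so that $\bI$ is $S_k$-invariant, and one must check that symmetrizing does not enlarge $\bI$ beyond a nullset, which again follows since $S_k$ is finite and a finite union of nullsets is a nullset. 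The counting/density compatibility — that positivity of the homomorphism measure is equivalent to nonemptiness after the density-point cleanup — is where the Lebesgue Density Theorem does the essential work, translating the purely combinatorial removal phenomenon into a statement about Lebesgue density in $[0,1]^{2^k-1}$.
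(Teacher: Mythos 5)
Your proposal is correct and is essentially the paper's own argument: pass to a separable realization, transfer $\bH$ to an $S_k$-invariant Lebesgue set $Q\subseteq[0,1]^{2^k-1}$, take $\bI$ to be the part of $\bH$ lying over the non-density points of $Q$, and conclude because $T(K,\bH\setminus\bI)$ is (up to the measure-algebra isomorphism of Lemma \ref{lifting}) a finite intersection of coordinate-projection pullbacks of the density-point set $D$, so any single common point is a density point of every factor and forces the intersection to have positive measure, while a null intersection all of whose points are density points must be empty. The only divergences are cosmetic: your per-$K$ nullset plus countable union over all $K$ is superfluous, since your cleanup set does not depend on $K$ and the paper's single removal works for every $K$ simultaneously; and at the dichotomy step the operative fact is this pointwise pullback of density points under coordinate projections (at an arbitrary point of the homomorphism set, not merely a ``typical'' one), rather than the Integration Rule or Total Independence, which enter only earlier, in establishing the product structure underlying Lemma \ref{lifting}.
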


\proof
 Let us consider the separable realization $\mathcal{S}$ of $\bH$ and the
  corresponding measurable equivalence $F:\xo\to [0,1]^{2^k-1}$. For some
  Lebesgue measurable set $Q\subseteq [0,1]^{2^k-1}$ we have that 
$|F^{-1}(Q)\triangle\bH|=0$. Since
$$F^{-1}(Q^\pi)\triangle \bH^\pi=(F^{-1}(Q)\triangle \bH)^\pi\,$$
we may suppose that $Q$ is $S_k$-invariant.
 By Lebesgue's Density Theorem,
  almost all points of $Q$ are density points. 
Let $D$ denote the ($S_k$-invariant)
 set of density points in $Q$ and let $S:=F^{-1}(D)$. 
Notice that the group $S_k$ acts on $[0,1]^{2^k-1}$ the following way.
Let $A_1, A_2,\dots, A_{2^k-1}$ be a list of non-empty subsets of $[k]$. Then
$$(y_{A_1}, y_{A_2},\dots,y_{A_{2^k-1}})^\pi= (y_{\pi^{-1}(A_1)}, 
y_{\pi^{-1}(A_2)}, \dots, y_{\pi^{-1}(A_{2^k-1})})\,.$$
By the invariance property of the separable realization, the maps $F_A$
commutes with the $S_k$-action that is $\pi\circ F_A=F_A\circ \pi$.
Also, let $B_1, B_2,\dots, B_r\,\,,(r=\sum_{i=1}^k (\stackrel {n} {i}))$
be the list of non-empty subsets of $[n]$ 
of size at most $k$, then $S_n$ acts on $[0,1]^r$
by
$$(y_{B_1}, y_{B_2},\dots,y_{B_r})^\pi=
(y_{\pi^{-1}(B_1)}, 
y_{\pi^{-1}(B_2)}, \dots, y_{\pi^{-1}(B_r)})\,.$$
Again, by the invariance property of the lifting $\rho\circ F_B =
F_B\circ \rho$, for any $B\subset [n]$, $|B|\leq k$, $\rho\in S_n$.
For $B\subset [n]$, $|B|=k$ a bijection $f:[k]\to B$ induces a measurable
isomorpism $L_f:[0,1]^{2^k-1}\to [0,1]^{r(B)}$, where
$r(B)$ denotes the set of non-empty subsets of $B$.
Let $L_B:[0,1]^r\to [0,1]^{r(B)}$ be the natural projection. Then by the
invariance property of the lifting 
\begin{equation} \label{cilin}
p_B^{-1}(p_f(\xo^k))=\widetilde{F}^{-1}(L_B^{-1}(L_f
([0,1]^{2^k-1})))\,.
\end{equation}
That is for any $k$-regular hypergraph $K$
$$T(K,\bS)=\cap_{E\in E(K)} p_E^{-1}(p_f(\bS))=
\widetilde{F}^{-1}(\cap_{E\in E(K)} L_E^{-1}(L_f(D)))\,.$$
Since each point of $D$ is a density point, each point of
$L_E^{-1}(L_f (D))$ is a density point for any $E\in E(K)$.
Thus $\cap_{E\in E(K)} L_E^{-1}( L_f(D))$ is either empty
or is of positive measure.
Consequently,
$T(K,\bS)$ is either empty or is of positive measure as well.
Choosing $\bI=H\backslash \bS$, we obtain that
$T(K, H\backslash \bI)= T(K,H\cap \bS)$ is either empty or is of positive
measure (note that $\mu(T(K,H\cap \bS))=\mu(T(K,\bS))$ and 
$T(K,H\cap \bS)\subseteq
T(K,\bS))\,.$ \qed

\begin{theorem}[Hypergraph Removal Lemma] For every $k$-uniform hypergraph $K$
  and constant $\epsilon>0$ there exists
 a number $\delta=\delta(K,\epsilon)$ such
  that for any $k$-uniform hypergraph $H$ on the node set $X$ with
  $t(K,H)<\delta$ there is a subset $L$ of $E(H)$ with $L\leq
  \epsilon{{|X|}\choose{k}}$ such that $t(K,H\setminus L)=0$. (\cite{Gow}.
\cite{Ish}, \cite{NRS}, \cite{Tao})
\end{theorem}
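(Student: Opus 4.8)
The plan is to derive the finite Hypergraph Removal Lemma from the Infinite Removal Lemma (Lemma \ref{infrem}) by a standard ultralimit compactness argument, arguing by contradiction. Suppose the theorem fails for some fixed $K$ and some $\epsilon>0$. Then for every $n$ there is a finite $k$-uniform hypergraph $H_n$ on a finite node set $X_n$ with $t(K,H_n)<1/n$, yet such that one cannot destroy all copies of $K$ by deleting at most $\epsilon\binom{|X_n|}{k}$ edges; equivalently, every subset $L\subseteq E(H_n)$ with $t(K,H_n\setminus L)=0$ satisfies $|L|>\epsilon\binom{|X_n|}{k}$. By passing to a subsequence we may assume $|X_1|<|X_2|<\cdots$, so the machinery of Section \ref{sec1} applies.

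Next I would form the ultralimit. Let $\xo$ be the ultraproduct of the $X_n$ and let $\bH=[\{S_{H_n}\}^\infty_{n=1}]\subseteq\xo^k$ be the ultralimit hypergraph, an $S_k$-invariant measurable set. Since $t(K,H_n)\to 0$ along $\omega$, the homomorphism-density computation $\muo^n(T(K,\bH))=\limo t(K,H_n)=0$ shows $T(K,\bH)$ is a nullset. Apply Lemma \ref{infrem} to get an $S_k$-invariant nullset $\bI\subseteq\bH$ with the dichotomy that $T(K,\bH\setminus\bI)$ is either empty or of positive measure. But $T(K,\bH\setminus\bI)\subseteq T(K,\bH)$, which has measure zero, so $T(K,\bH\setminus\bI)=\emptyset$. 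The nullset $\bI$ is the ultralimit object encoding the edges we intend to remove.

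The crux is to pull $\bI$ back to finite deletion sets of controlled size. Since $\bI$ is a nullset, for the fixed threshold $\epsilon$ there is $\overline{A}=[\{A_n\}^\infty_{n=1}]\in\cP$ with $\bI\subseteq\overline{A}$ and $\muo(\overline{A})<\epsilon$; writing $L_n:=A_n\cap S_{H_n}$ as the corresponding finite edge sets, we have $\limo |L_n|/|X_n|^k<\epsilon$, so $|L_n|\le\epsilon\binom{|X_n|}{k}$ for $\omega$-almost all $n$ (after adjusting the constant to pass between $|X_n|^k$ and $\binom{|X_n|}{k}$, absorbing the $S_k$-symmetrization and the $k!$ factor). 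The key transfer step is that $T(K,\bH\setminus\bI)=\emptyset$ forces $t(K,H_n\setminus L_n)=0$ for $\omega$-almost all $n$: indeed $T(K,\bH\setminus\overline{A})=[\{T(K,H_n\setminus L_n)\}^\infty_{n=1}]$ and its measure is $\limo t(K,H_n\setminus L_n)$, which must vanish, and vanishing of an ultralimit of nonnegative rationals $t(K,H_n\setminus L_n)\in\{0,1/|X_n|^n,\dots\}$ along $\omega$ in fact forces the individual terms to be $0$ on an $\omega$-large set since a single copy of $K$ contributes density at least $|X_n|^{-n}$, which does not tend to $0$ uniformly but does eliminate positivity in the ultralimit. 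Choosing any such $n$ large enough that both $|L_n|\le\epsilon\binom{|X_n|}{k}$ and $t(K,H_n\setminus L_n)=0$ contradicts the assumption that no small deletion set works for $H_n$.

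The main obstacle I anticipate is the final discretization: an ultralimit being zero does not literally force all finite terms to be zero, only that they lie in $(-\delta,\delta)$ for each $\delta$ on an $\omega$-large set. The honest resolution is to choose $\delta=\delta(K,\epsilon)$ at the outset as the output of the whole contradiction argument rather than fixing $1/n$: one shows that for a suitable $\overline{A}$ covering $\bI$, the set of $n$ with $|L_n|\le\epsilon\binom{|X_n|}{k}$ and $t(K,H_n\setminus L_n)$ below the minimal possible positive value lies in $\omega$, and that minimal positive value forces exact vanishing. I would take care to verify that $\overline{A}$ and hence $L_n$ can be chosen $S_k$-invariant (symmetrizing $A_n$ costs only a factor $k!$ in measure, still below $\epsilon$ after rescaling), so that $L_n$ is a genuine set of hyperedges; this symmetry bookkeeping, together with the $|X_n|^k$ versus $\binom{|X_n|}{k}$ conversion, is the only place where constants must be tracked, and it is routine.
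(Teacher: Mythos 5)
Your overall architecture is exactly the paper's: argue by contradiction, form the ultralimit hypergraph $\bH$, invoke Lemma \ref{infrem} to get an $S_k$-invariant nullset $\bI$ with $T(K,\bH\setminus\bI)=\emptyset$, cover $\bI$ by an internal set of measure below a small $\e_1$, symmetrize at the cost of a $k!$ factor, and read off finite deletion sets. The genuine gap is in your transfer step, and you half-acknowledge it yourself. You argue that $\muo^n(T(K,\bH\setminus\overline{A}))=\limo t(K,H_n\setminus L_n)=0$ ``eliminates positivity in the ultralimit,'' but this implication is false: the sequence $a_n=|X_n|^{-n}$ has every term positive and ultralimit zero, so vanishing of the ultralimit of densities can never by itself yield $t(K,H_n\setminus L_n)=0$ on an $\omega$-large set. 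Your proposed repair --- demanding $t(K,H_n\setminus L_n)$ fall ``below the minimal possible positive value'' --- does not work either, because that threshold is $|X_n|^{-n}$, which depends on $n$ and tends to $0$: the statement $\limo a_n=0$ only gives $\{n:\, a_n<\delta\}\in\omega$ for each \emph{fixed} $\delta>0$, and no fixed $\delta$ lies below all the thresholds $|X_n|^{-n}$ simultaneously.

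The correct mechanism, which the paper uses, is set-level rather than measure-level: emptiness of an ultraproduct set transfers exactly. Once $\bJ\in\cP$ is an $S_k$-invariant internal set with $\bI\subseteq\bJ$ and $\muo(\bJ)\leq\e_1$, monotonicity gives $T(K,\bH\setminus\bJ)\subseteq T(K,\bH\setminus\bI)=\emptyset$ as subsets of $\xo^n$, and $T(K,\bH\setminus\bJ)$ is itself the ultraproduct $[\{T(K,H_n\setminus L_n)\}^\infty_{n=1}]$. An ultraproduct of finite sets is empty if and only if $\omega$-almost all of the finite sets are empty: if $\{n:\,T(K,H_n\setminus L_n)\neq\emptyset\}\in\omega$, choosing a representative point in each nonempty term exhibits a point of the ultraproduct, a contradiction. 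Hence $T(K,H_n\setminus L_n)=\emptyset$, i.e.\ $t(K,H_n\setminus L_n)=0$ \emph{exactly}, for $\omega$-almost all $n$; no measure argument is needed at this step, and the measure bound on $\bJ$ is used only to control $|L_n|$. With this one replacement (note you must use emptiness of $T(K,\bH\setminus\bI)$ directly, rather than passing to its measure, which discards precisely the information you need) your proof closes and coincides with the paper's.
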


\proof We proceed by contradiction. Let $K$ be a fixed hypergraph and
  $\epsilon>0$ be a fixed number for which the theorem fails. This means that
  there is a sequence of hypergraphs $H_i$ on the sets $X_i$ such that
  $lim_{i\to\infty}t(K,H_i)=0$ but in each $H_i$ there is no set $L$ with the
  required property. Let us represent the hypergraphs by symmetric subsets
 $S_{H_i}$ of
  $X_i^k$ and again
  let $\bH\subseteq\xo^k$ denote the ultralimit of them. Then
   $\mu(T(K,\bH))=\limo t(K,H_i)=0$ and thus by the previous lemma
  there is a
  zero measure $S_k$-invariant
  set $\bI\subseteq\xo^k$ such that $T(K,\bH\setminus
  \bI)=\emptyset$. By the definition of nullsets, for any $\e_1>0$ there exists
 an ultralimit set $\bJ\subset \xo^k$ such that $\bI\subset \bJ$ and 
$\mu(\bJ)\leq
 \e_1$. We can suppose that $\bJ$ is $S_k$-invariant as well. Let
 $[\{J_i\}_{i=1}^\infty]=\bJ$, then 
for $\omega$-almost all $i$, $J_i$ is $S_k$-
 invariant, $|J_i|\leq \e_1|X_i|^k$ and $T(K,H_i\backslash L_i)=\emptyset$,
 where $L_i$ is the set of edges $\{x_1,x_2,\dots, x_k\}$ such that
 $(x_1,x_2,\dots,x_k)\in J_i$. Clearly, $|L_i|\leq |J_i|$, hence if
$\e_1$ is small enough then $|L_i|\leq \epsilon{{|X|}\choose{k}}$ leading
to a contradiction. \qed
\subsection{The Hypergraph Limit Object}\label{hypgr}
In this section we introduce the notion of {\bf hypergraphons} (see
\cite{LSZ}
and \cite{Borgs} for {\bf graphons}).
Let $W:[0,1]^{2^k-1}\to \{0,1\}$ be a Lebesgue measurable function. We
call such functions {\bf directed hypergraphons}. As in the previous
subsection we consider the $S_k$-action on $[0,1]^{2^k-1}$ and
call the $S_k$-invariant directed hypergraphons just
hypergraphons\,. Now we introduce the homomorphism density of a
hypergraph into a hypergraphon. Let $K$ be a $k$-uniform
hypergraph and $W:[0,1]^{2^k-1}\to\bR$ be a
hypergraphon. Let $C_K=\{C_1, C_2,\dots, C_s\}$ be the set of non-empty
elements of the simplicial complex of $K$.

\noindent
{\bf Example:} If $K=\{\{1,2,3\},\{2,3,4\}\}$ then
$$ C_K=\{\{1\}, \{2\},  \{3\},  \{4\},  \{12\},  \{13\}, \{23\},
 \{24\},\{34\},\{1,2,3\},\{2,3,4\}\}.$$
For each edge $E\in E(K)$ we fix a bijection $s_E:[k]\to E$.
Then the {\bf homomorphism density} of $K$ in $W$
is defined as \begin{equation} \label{density}
 t(K,W):=\int_0^1\int_0^1\dots\int_0^1\,\prod_{E\in E(K)}
W_\bH(x_{s_E(A_1)}, x_{s_E(A_2)},\dots, x_{s_E(A_{2^k-1})}) dx_{C_1}
dx_{C_2}\dots dx_{C_s}\,. \end{equation}


Now let $\{X_i\}^\infty_{i=1}$ be finite sets and $H_i$ be
$k$-uniform directed hypergraphs on $X_i$. Let $\bH\subset \xo^k$ be
their ultralimit hypergraph. Let $F:X^k\to [0,1]^{2^k-1}$ be separable
realization and $Q\subset [0,1]^{2^k-1}$ be a $S_k$-invariant measurable
set such that $\mu(F^{-1}(Q)\triangle \bH))=0$. Then we define
$W_\bH$ as the characteristic function of $Q$.
 Clearly, $W_\bH$ is an hypergraphon. Now we can state our main
theorem.

\begin{theorem}[Main Theorem] Let $\{H_i\}_{i=1}^{\infty}$ be a 
sequence of
  $k$-uniform hypergraphs as above and let $K$ be a fixed $k$-uniform
  hypergraph on the vertex set $[n]$. Then
$$\limo~ t(K,H_i)=t(K,W_\bH).$$
\end{theorem}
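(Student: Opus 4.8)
The plan is to connect the two sides of the identity by inserting the separable realization $F$ as a bridge: the left-hand ultralimit density $\limo t(K,H_i)$ equals a measure in the nonseparable space $\xo^n$, while the right-hand $t(K,W_\bH)$ is a Lebesgue integral over a cube, and the lifting isomorphism of Lemma \ref{lifting} should identify these two. First I would rewrite the left side using the material from subsection \ref{hypgr}: we have $\muo^n(T(K,\bH))=\limo t(K,H_i)$, where $T(K,\bH)=\bigcap_{E\in E(K)} p_E^{-1}(p_f(\bH))$. Since $\bH$ agrees with $F^{-1}(Q)$ up to a nullset, and $W_\bH=\chi_Q$, I would express $\muo^n(T(K,\bH))$ as an integral of the product $\prod_{E\in E(K)} \chi_{p_E^{-1}(p_f(\bH))}$ over $\xo^n$ with respect to $\mu^n$.

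Next I would push this integral through the lifting map. The key is the cylinder identity \eqref{cilin} from the proof of Lemma \ref{infrem}, which says $p_B^{-1}(p_f(\xo^k))=\widetilde{F}^{-1}(L_B^{-1}(L_f([0,1]^{2^k-1})))$; applied to $Q$ in place of the whole cube, each factor $\chi_{p_E^{-1}(p_f(\bH))}$ becomes $\chi_{\widetilde F^{-1}(L_E^{-1}(L_{s_E}(Q)))}$ up to a nullset, which is $W_\bH$ evaluated at the appropriate coordinates $(x_{s_E(A_1)},\dots,x_{s_E(A_{2^k-1})})$. By the measure-algebra isomorphism in the second half of Lemma \ref{lifting}, integrating this product over $(\xo^n,\langle l(B)\rangle,\mu)$ equals integrating the transported product over the Lebesgue cube $[0,1]^{\sum_{i=1}^k \binom{n}{i}}$. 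Finally I would apply Fubini on the Lebesgue cube to integrate out all coordinates $x_B$ whose index $B$ is not in the simplicial complex $C_K$ of $K$; since the integrand only depends on coordinates indexed by faces of edges of $K$, those integrations contribute factors of $1$, and what remains is exactly the integral \eqref{density} defining $t(K,W_\bH)$.

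The main obstacle I expect is bookkeeping the coordinate correspondence under the projections and bijections so that the factor $W_\bH(x_{s_E(A_1)},\dots)$ in \eqref{density} matches the transported characteristic function $\chi_{\widetilde F^{-1}(L_E^{-1}(L_{s_E}(Q)))}$ correctly for each edge $E$, keeping the $S_k$- and $S_n$-equivariance straight. In particular one must verify that the variables shared between two edges of $K$ (the common faces, recorded in $C_K$) are genuinely identified as the same integration variable on the Lebesgue side — this is precisely why the lifting must be taken on $\xo^n$ rather than separately on each $\xo^k$ copy, and it is what makes the dependence structure of the integrand agree with the simplicial complex $C_K$. A secondary subtlety is that the isomorphism of Lemma \ref{lifting} is only an isomorphism of measure algebras, so the identity $\bH\cong F^{-1}(Q)$ holds only up to nullsets; I would note that replacing $\bH$ by $F^{-1}(Q)$ in finitely many factors of a bounded product changes the integral by zero, so the passage is harmless. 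Once these identifications are pinned down, the equality of the two integrals is immediate and the theorem follows.
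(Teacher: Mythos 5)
Your proposal is correct and follows essentially the same route as the paper's own proof: both use the cylinder identity (\ref{cilin}) together with the lifting isomorphism of Lemma \ref{lifting} on $\xo^n$ to rewrite $\muo^n(T(K,\bH))$ as the Lebesgue integral of $\prod_{E\in E(K)} W_\bH(x_{s_E(A_1)},\dots,x_{s_E(A_{2^k-1})})$ over the cube, and then observe that the integrand depends only on the coordinates indexed by the simplicial complex of $K$, so integrating out the remaining variables yields exactly the defining integral (\ref{density}) of $t(K,W_\bH)$. Your explicit handling of the nullset passage ($\bH\cong F^{-1}(Q)$ only up to measure zero) and of the equivariant coordinate bookkeeping simply makes precise what the paper leaves implicit.
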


\proof 
Applying the Equation (\ref{cilin}) we obtain that
$$\mu(T(K,H)=Vol\left(\cap_{E\in E(K)} L^{-1}_E (L_f (Q))\right)\,.$$
Hence
$$\mu(T(K,H)=\int^1_0 \int^1_0 \dots \int^1_0
\prod_{E\in E(K)} \Psi_E  dx_{B_1}
dx_{B_2}\dots dx_{B_r}\,,$$
where $\Psi_E$ is the characteristic function of $L^{-1}_E (L_f(Q))$.
Clearly, 
$$\Psi_E(x_{B_1},x_{B_2},\dots, x_{B_r})=
W_\bH(x_{s_E(A_1)}, x_{s_E(A_2)},\dots, x_{s_E(A_{2^k-1})})\,.$$
Since $\prod_{E\in E(K)} \Psi_E$ depends only on the variables associated
to the elements of the simplicial complex of $K$, the Theorem follows.
\qed

\vskip 0.2in
\noindent

The following theorem is an immediate corollary of the previous one.

\begin{theorem} If $\{H_i\}_{i=1}^{\infty}$ is a convergent sequence of
  $k$-uniform hypergraphs then there exists a $2^k-1$ variable
  hypergraphon $W$ such that $lim_{i\to\infty}t(K,H_i)=t(K,W)$ for every
  $k$-uniform hypergraph $K$.
\end{theorem}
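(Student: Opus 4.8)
The plan is to deduce this directly from the Main Theorem, exploiting the single feature that distinguishes a \emph{convergent} sequence from an arbitrary one: for such a sequence the ordinary limit $\lim_{i\to\infty} t(K,H_i)$ exists, and an ordinary limit of a bounded real sequence automatically agrees with every ultralimit of it. Thus the whole argument reduces to producing one hypergraphon $W$ that simultaneously represents all the homomorphism densities, and the Main Theorem already supplies exactly this.

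Concretely, I would first form the ultralimit hypergraph $\bH\subseteq\xo^k$ from the symmetric sets $S_{H_i}$, as in the statement of the Main Theorem. I then fix, once and for all, a separable realization $F$ of $\bH$ together with an $S_k$-invariant Lebesgue set $Q$ satisfying $\mu(F^{-1}(Q)\triangle\bH)=0$, and set $W:=W_\bH$ to be the characteristic function of $Q$. The crucial point, which costs nothing, is that this single hypergraphon $W$ is independent of $K$: it is manufactured from $\bH$ alone. The Main Theorem then gives, for every fixed $k$-uniform hypergraph $K$,
$$\limo\, t(K,H_i)=t(K,W).$$

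It remains to replace the ultralimit on the left by the genuine limit. Here I would invoke the defining property of $\limo$ recalled at the start of the paper: for any $\e>0$ the set $\{i\mid t(K,H_i)\in[\limo t(K,H_i)-\e,\ \limo t(K,H_i)+\e]\}$ lies in $\omega$. If the sequence is convergent, then for each fixed $K$ the reals $t(K,H_i)$ converge, say to $L_K$; hence for every $\e>0$ the set $\{i\mid |t(K,H_i)-L_K|<\e\}$ is cofinite and therefore belongs to the nonprincipal ultrafilter $\omega$. Comparing the two membership statements forces $\limo t(K,H_i)=L_K=\lim_{i\to\infty} t(K,H_i)$. Substituting this into the displayed equality yields $\lim_{i\to\infty} t(K,H_i)=t(K,W)$ for every $K$, which is the assertion.

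The only subtlety worth flagging is the order of quantifiers: $W$ must be selected \emph{before} quantifying over $K$, and this is exactly what the construction delivers, since $W=W_\bH$ depends on the ultralimit hypergraph and not on any test hypergraph. I do not anticipate a genuine obstacle here; the entire content is the elementary fact that the ultralimit of a convergent real sequence equals its limit, combined with the already-established Main Theorem.
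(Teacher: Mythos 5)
Your proposal is correct and matches the paper's route exactly: the paper derives this result as an ``immediate corollary'' of the Main Theorem, with the implicit content being precisely what you spell out --- that $W=W_\bH$ is built from the ultralimit hypergraph alone (hence independent of $K$), and that a nonprincipal ultralimit of a convergent bounded real sequence coincides with its ordinary limit, since the relevant cofinite sets lie in $\omega$. Your explicit attention to the quantifier order on $W$ versus $K$ is a faithful (and slightly more careful) rendering of the same argument.
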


\medskip

\noindent
{\bf Remark:}
One can introduce the notion of a {\bf projected hypergraphon} $
\widetilde{W}_{\bH}$
 which
is the projection of a hypergraphon to the first $2^k-2$ coordinates, where
the last coordinate is associated to $[k]$ itself.
That is 
$$\widetilde{W}_{\bH}(x_{A_1}, x_{A_2}, \dots, x_{A_{2^k-2}})=
\int_0^1 W_{\bH} (x_{A_1}, x_{A_2}, \dots, x_{A_{2^k-1}}) dx_{A_{2^k-1}}\,.$$
That is $\widetilde{W}_{\bH}$ is a $[0,1]$-valued function which is
symmetric under the induced $S_k$-action of its coordinates.
By the classical Fubini-theorem we obtain that using the notation of the
previous theorem: 
$$lim_{i\to\infty}t(K,H_i)=$$
$$=\int_0^1\int_0^1\dots \int_0^1
\prod_{E\in E(K)}\widetilde{W}_{\bH} (x_{s_E(A_1)}, x_{s_E(A_2)},\dots, 
x_{s_E(A_{2^k-2})}) dx_{C_1}
dx_{C_2}\dots dx_{C_t}\,,$$
where $C_1, C_2, \dots, C_t$ is the list of the
at most $k-1$-dimensional simplices
in $K$. Note that in the case $k=2$ it is just the graph limit formula of
\cite{LSZ}.

\subsection{The Hypergraph Regularity Lemma}
 First we need some definitions. Let $X$ be a finite set, then
$K_r(X)$ denotes the complete $r$-uniform hypergraph on $X$.
An $l$-{\bf hyperpartition} $\cH$ is a family of partition
$K_r(X)=\cup^l_{j=1} P^j_r$, where $P^j_r$ 
is an $r$-uniform hypergraph, for $1\leq r \leq k$.
We call $\cH$ $\delta$-{\bf equitable}
if for any $1\leq r \leq k$ and $1\leq i< j \leq l$:
$$\frac{||P^i_r|-|P^j_r||}{|K_r(X)|} <\delta\,.$$
An $l$-hyperpartition $\cH$ induces a partition on $K_k(X)$ the following
way.
\begin{itemize}
\item
Two elements $\underline{a},\underline{b}\in K_k(X)$,
$\underline{a}=\{a_1, a_2,\dots, a_k\}$,
$\underline{b}=\{b_1, b_2,\dots, b_k\}$ are equivalent if there exists
a permutation $\sigma\in S_k$ such that for any subset 
$A=\{i_1< i_2 < \dots <
i_{|A|}\}\in [k]$, $\{a_{i_1}, a_{i_2}, 
\dots, a_{i_{|A|}}\}$  and $\{b_{\sigma(i_1)}, b_{\sigma(i_2)}, 
\dots, b_{\sigma(i_{|A|})}\}$ are both in the same $P^j_{|A|}$ for some
$1\leq j \leq l$.
\end{itemize}
 It is easy to see that this defines an equivalence
relation and thus it results in a partition $\cup^t_{j=1} C_j$
of $K_k(X)$ into {\bf $\cH$-cells}. 
A {\bf cylinder intersection} $L\subset K_r(X)$ is an $r$-uniform hypergraph
defined the following way. Let $B_1$, $B_2$,\dots $B_r$ be 
$r-1$ uniform hypergraphs on $X$, then an $r$-edge $\{a_1, a_2,\dots, a_r\}$
is in $L$ if there exists a permutation $\tau\in S_r$ such that
$$\{a_{\sigma(1)}, a_{\sigma(2)},\dots, a_{\sigma(i-1)}, a_{\sigma(i+1)},
\dots a_{\sigma(r)}\}\in B_i\,.$$
As in the graph case, we call an $r$-uniform hypergraph $G$ $\e$-regular if
$$\Big|\frac{|G|}{|K_r(X)|}-\frac{|G\cap L|}{|L|}\Big| \leq \e\,,$$
for each cylinder intersection $L$, where $|L|\geq \e |K_r(X)|\,.$
Now we are ready to state the hypergraph regularity lemma for $k$-uniform
hypergraphs (see \cite{Gow}, \cite{Ish}, \cite {RSko}, \cite{Tao}).
\begin{theorem}[Hypergraph regularity lemma] Let fix a constant $k>0$. Then
for any $\e>0$ and $F:\bN\to(0,1)$ there exists
constants $c=c(\e,F)$ and $N_0(\e, F)$ such that if $H$ is
a $k$-uniform hypergraph on a set $X$, $|X|\geq N_0(\e,F)$, then there
exists an $F(l)$-equitable $l$-hyperpartition $\cH$ for some
$1<l\leq c$ such that
\begin{itemize}
\item  Each $P^r_j$ is $F(l)$-regular.
\item $|H\triangle T|\leq \e \left({{|X|}\choose{k}}\right)\,$
where $T$ is the union of some $\cH$-cells. 
\end{itemize}
\end{theorem}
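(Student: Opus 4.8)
The plan is to run the same three-step scheme that produced the Removal Lemma: negate the statement, pass to the ultraproduct, read off a \emph{perfectly} regular bounded-complexity partition from the separable realization, and then descend along the ultrafilter. Suppose the lemma fails for some fixed $k$, $\epsilon$ and $F$. Then for every $n$ there is a $k$-uniform hypergraph $H_n$ on a set $X_n$ with $|X_n|\ge n$ admitting, for each $1<l\le n$, no $F(l)$-equitable $l$-hyperpartition whose parts are all $F(l)$-regular and which $\epsilon$-approximates $H_n$ by a union of cells. Passing to a subsequence so that $|X_1|<|X_2|<\cdots$, I would form the ultralimit hypergraph $\bH=[\{S_{H_i}\}_{i=1}^\infty]\subseteq\xo^k$, an $S_k$-invariant measurable set, and keep in mind that it suffices to contradict the choice of $H_i$ for $\omega$-almost all $i$.

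Next I would invoke Theorem \ref{reali} to obtain a separable realization $\{l(A),F_A\}_{\emptyset\neq A\subseteq[k]}$ with $\bH$ measurable in $\langle l(A)\mid\emptyset\neq A\subseteq[k]\rangle$; by Lemma \ref{lifting} the map $F=\oplus_A F_A$ identifies this measure algebra with the Lebesgue algebra of $[0,1]^{2^k-1}$ and carries $\bH$ to an $S_k$-invariant set $Q$. The crucial observation is that the independence clause of the realization, namely that $l(A)$ is independent from $\sigma(A)^*$, is exactly \emph{perfect regularity}. Cutting the $F_{[r]}$-coordinate into $l_0$ equal intervals $I_1,\dots,I_{l_0}$ and pulling back by the symmetric family $\{F_A\mid|A|=r\}$ produces, for each level $1\le r\le k$, a partition $\{P^{A,j}\}_{j=1}^{l_0}$ with $\mu(P^{A,j})=1/l_0$ (so perfectly equitable) and, by independence, $E(P^{A,j}\mid\sigma(A)^*)=1/l_0$ (so perfectly quasi-random relative to the lower faces). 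I would then apply the Rectangular Approximation Lemma in $[0,1]^{2^k-1}$ to approximate $Q$ within $\epsilon$ by a union of boxes of a sufficiently fine equipartition with $l_0$ pieces per coordinate; after $S_k$-symmetrizing, this exhibits $\bH$ as $\epsilon$-close to a union $T$ of $\cH$-cells of the (perfectly equitable, perfectly regular) hyperpartition just built. This is the infinite regularity statement, and the single number $l_0$ is \emph{finite and fixed}.

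The final step is descent. By Theorem \ref{tetel2} each $P^{A,j}$ and the set $T$ may be represented, up to nullsets, by ultralimit sets $[\{P^{A,j}_i\}]$ and $[\{T_i\}]$. The identities $\mu(P^{A,j})=1/l_0$ and $\mu(\bH\triangle T)\le\epsilon$ translate, for $\omega$-almost all $i$, into $F(l_0)$-equitability and into $|H_i\triangle T_i|\le\epsilon|X_i|^k$, hence $\le\epsilon{{|X_i|}\choose{k}}$ up to the factor $k!$. For the regularity clause I would argue by contradiction inside the descent: if for $\omega$-many $i$ some cylinder intersection $L_i\subseteq K_r(X_i)$ with $|L_i|\ge\epsilon|K_r(X_i)|$ witnessed a density deviation exceeding $\epsilon$ for some $P^{A,j}_i$, then $\overline{L}=[\{L_i\}]$ is a finite intersection of sets each lying in some $\sigma(C)$ with $C\in A^*$, so $\overline{L}\in\sigma(A)^*$ and $\mu(\overline{L})\ge\epsilon>0$, while $\mu(P^{A,j}\cap\overline{L})/\mu(\overline{L})$ would differ from $1/l_0$ by at least $\epsilon$, contradicting $E(P^{A,j}\mid\sigma(A)^*)=1/l_0$. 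Thus for $\omega$-almost all $i$ the finite hyperpartition is $F(l_0)$-regular as well. Since $l_0$ is fixed and $n\to\infty$, for $\omega$-almost all $i$ we have $1<l_0\le n_i$, so $\{P^{A,j}_i\}$ is an admissible good hyperpartition for $H_i$, contradicting the choice of $H_i$.

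I expect the descent of regularity to be the main obstacle. The finite definition quantifies over the enormous family of cylinder intersections of relative size at least $\epsilon$, and one must guarantee that a single index $i$ is simultaneously good for all of them; the ultralimit-of-witnesses device resolves this, but only because the ultralimit of a cylinder intersection genuinely lands in $\sigma(A)^*$ and retains measure $\ge\epsilon$, so that the \emph{exactly} constant conditional expectation coming from the independent complement truly forbids any surviving deviation. A secondary point requiring care is the $S_k$-bookkeeping: the grid partitions chosen on the $F_{[r]}$-coordinates must be transported consistently by the symmetry relations $F_A(x)=F_{A^\pi}(x^\pi)$ so that the boxes symmetrize into genuine $\cH$-cells, and the passage between $|X_i|^k$ and ${{|X_i|}\choose{k}}$ should only cost the harmless factor $k!$.
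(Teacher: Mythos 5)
Your proposal reproduces the paper's proof essentially step for step: negate the statement, pass to the ultralimit hypergraph, use the separable realization together with a box (rectangular) approximation of the lifted $S_k$-invariant set $Q\subseteq[0,1]^{2^k-1}$ to build a perfectly equitable $l$-hyperpartition whose parts have constant conditional expectation over $\sigma([r])^*$, and then descend along the ultrafilter, handling regularity exactly as the paper does --- by taking the ultralimit of the witnessing cylinder intersections, which lies in $\sigma([r])^*$ and has measure at least the threshold, contradicting independence. The only differences are cosmetic (the paper first fixes a single pair $(r,j)$ by the ultrafilter pigeonhole before passing to the limit of witnesses, and it glosses over the same $k!$ normalization and the $F(l)$-versus-$\epsilon$ threshold bookkeeping that you explicitly flag).
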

\proof
Suppose that the Theorem does not hold for some $e>0$ and $F:\bN\to (0,1)$.
That is there exists a sequence of $k$-uniform hypergraphs $H_i$ without
having $F(j)$-equitable $j$-hyperpartitions for any
$1<j\leq i$ satisfying the conditions of
our Theorem.
Let us consider their ultralimit $[\{S_{H_i}\}^\infty_{i=1}]=\bH\subset \xo^k$.
Similarly to the proof of the Removal Lemma we formulate an infinite
version of the Regularity Lemma as well.

\noindent
Let $K_r(\xo)$ denote the complete $r$-uniform hypergraph on $X$, that
is the set of points \\ $(x_1,x_2,\dots,x_r)\in \xo^r$ such that $x_i\neq x_j$
if $i\neq j$. Clearly $K_r(\xo)\subset \xo^r$ is measurable and
$\mu_{[r]}(K_r(\xo))=1\,.$
An $r$-uniform hypergraph on $\xo$ is an $S_r$-invariant measurable
subset of $K_r(\xo)$. An $l$-hyperpartition $\wch$ is a family of
partitions $K_r(\xo)=\cup^l_{j=1}{\bf  P^j_r}$, where
${\bf P^j_r}$ is an $r$-uniform hypergraph for $1\leq r \leq k$. Again, an
$l$-hyperpartition induces a partition of $K_k(\xo)$ into $\wch$-cells
exactly the same way as in the finite case. It is easy to see that each
$\wch$-cell is measurable.
\begin{proposition} (Hypergraph Regularity Lemma, infinite version)
For any $\e>0$, there exists a $0$-equitable
 $l$-hyperpartition (where $l$ depends on
$\bH$) $\wch$ such that
\begin{itemize}
\item Each ${\bf P^j_r}$ is in $\sigma([r])^*$.
\item $\mu_{[k]}(H\triangle T)\leq \e$, where $T$ is a union of some
$\wch$-cells. \end{itemize}
\end{proposition}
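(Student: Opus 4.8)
The plan is to reduce the infinite regularity lemma to the Rectangular Approximation Lemma on a Lebesgue product, exactly as announced in the introduction, by transporting $\bH$ through a separable realization. Concretely, I would first invoke Theorem \ref{reali} to obtain a separable realization $\{l(A),F_A\}_{\emptyset\neq A\subseteq[k]}$ in which $\bH$ is measurable, and then the lifting Lemma \ref{lifting}, which gives a measure-algebra isomorphism $F=\oplus_{\emptyset\neq A\subseteq[k]}F_A:\xo^k\to[0,1]^{2^k-1}$ whose coordinates are indexed by the nonempty subsets $A\subseteq[k]$, the coordinate $A$ realizing the independent complement $l(A)\subseteq\sigma(A)$ of $\sigma(A)^*$. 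Under $F$ the set $\bH$ corresponds to a Lebesgue-measurable set $Q\subseteq[0,1]^{2^k-1}$, and by property (5) of Definition \ref{sepre} the map $F$ intertwines the $S_k$-action on $\xo^k$ with the coordinate-permutation action on $[0,1]^{2^k-1}$, so $Q$ is $S_k$-invariant.

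Next I would build the hyperpartition from the independent-complement coordinates. Fix the partition of $[0,1]$ into $l$ equal intervals $I_1,\dots,I_l$ and, for $1\le r\le k$ and $1\le j\le l$, set ${\bf P^j_r}:=\{x\in K_r(\xo):F_{[r]}(x)\in I_j\}$, where $F_{[r]}$ realizes $l([r])$ on $\xo^r$. By property (3) of Definition \ref{sepre} every element of $l([r])$ is fixed by $S_{[r]}$, so each ${\bf P^j_r}$ is $S_{[r]}$-invariant, i.e. a genuine $r$-uniform hypergraph, and the ${\bf P^j_r}$ partition $K_r(\xo)$. Because $l([r])$ is independent from $\sigma([r])^*$ and $F_{[r]}$ is measure preserving with $\lambda(I_j)=1/l$, the conditional expectation $E({\bf P^j_r}\mid\sigma([r])^*)$ is the constant $1/l$ for every $j$; this is precisely the content of the first bullet (each part has constant relative density over $\sigma([r])^*$), and in particular $\mu({\bf P^j_r})=1/l$, so $\wch$ is $0$-equitable.

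It then remains to produce the union of cells approximating $\bH$. By the projection-compatibility built into Lemma \ref{lifting}, the datum recorded by a $\wch$-cell — for each face $A$, which interval $I_{j(A)}$ contains $F_A$ — is exactly the $F$-preimage of a box $\prod_{\emptyset\neq A\subseteq[k]}I_{j(A)}$, taken up to the $S_k$-action. Thus unions of $\wch$-cells correspond under $F$ precisely to $S_k$-invariant finite unions of such boxes. I would now apply the Rectangular Approximation Lemma of Lebesgue spaces to $Q$: for $l$ large enough there is a finite union of boxes $R$ with $\lambda(Q\triangle R)\le\e$, and since $Q$ is $S_k$-invariant I would symmetrize, replacing $R$ by the union of those boxes $B$ with $\lambda(Q\cap B)\ge\frac12\lambda(B)$; this $R$ is $S_k$-invariant and, as the partition refines, still satisfies $\lambda(Q\triangle R)\le\e$ by the Lebesgue density / martingale convergence argument. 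Setting $T:=F^{-1}(R)\cap K_k(\xo)$ gives a union of $\wch$-cells with $\mu_{[k]}(\bH\triangle T)=\lambda(Q\triangle R)\le\e$, completing the proof.

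The main obstacle is the $S_k$-equivariance. Two points require care: first, that the separable realization and its lifting genuinely intertwine the two symmetric-group actions (this is exactly what properties (2)--(5) of Definition \ref{sepre} and the invariance clauses of Lemma \ref{lifting} are for), so that both $Q$ and the box $\sigma$-algebra are $S_k$-invariant; and second, that the symmetrization of the rectangular approximation keeps the error below $\e$ while landing in the algebra generated by the cells. Once these are in place the rest is bookkeeping: equitability and regularity are immediate from independence of $l([r])$ from $\sigma([r])^*$, and the approximation is a verbatim transcription of the Rectangular Approximation Lemma through the measure-algebra isomorphism $F$.
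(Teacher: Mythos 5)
Your proposal is correct and follows essentially the same route as the paper: transport $\bH$ through the separable realization and the lifting to an $S_k$-invariant $Q\subseteq[0,1]^{2^k-1}$, set ${\bf P^j_r}=F_{[r]}^{-1}$ of the $l$ equal intervals, identify $\wch$-cells with $F$-preimages of $S_k$-orbits of $l$-boxes, and approximate $Q$ by an $S_k$-invariant finite union of boxes (your explicit majority-vote symmetrization with the martingale/density argument just fills in what the paper dismisses as ``the usual symmetrization argument,'' and your intersecting with $K_r(\xo)$ handles the diagonal the paper removes by requiring $i_s\neq i_t$ in each box). One remark: your reading of the first bullet as $E({\bf P^j_r}\mid\sigma([r])^*)=1/l$ rather than literal membership of ${\bf P^j_r}$ in $\sigma([r])^*$ is the right one --- literal membership would force $\mu({\bf P^j_r})\in\{0,1\}$ by independence, so the paper's wording (repeated in its own proof) is evidently a slip, and the independence form is exactly what the deduction of the finite regularity lemma uses against cylinder intersections.
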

\proof
Let $\cS$ be a separable realization for $\bH$ and
$Q\subseteq [0,1]^{2^k-1}$ be an $S_k$-invariant subset such that
$\mu_{[k]}(F^{-1}(Q)\triangle \bH)=0$. Since $Q$ is a Lebesgue-measurable
set, there exists some $l>0$ such that $Vol_{2^k-1}(Q\triangle Z)<\e$,
where $Z$ is a union of $l$-boxes. Recall that an $l$-box is a product set
in the form
$$\left(\frac{i_1}{l}, \frac{i_1+1}{l}\right)\times 
\left(\frac{i_2}{l}, \frac{i_2+1}{l}\right)
\times \dots \times 
\left(\frac{i_{2^k-1}}{l}, \frac{i_{2^k-1}+1}{l}\right)\,.$$
By the usual symmetrization argument we may suppose
that the set $Z$ is invariant under the $S_k$-action on the $l$-boxes.
Since the measure of points $(x_1,x_2,\dots, x_{2^k_1})\in [0,1]^{2^k-1}$
such that $x_s=x_t$ for some $s\neq t$ is zero, we may also suppose that in 
each box in $Z$, $i_s\neq i_t$ if $s\neq t$.
Let $Z=\cup_{m=1}^q O_m$, where $O_m$ is an $S_k$-orbit of boxes. That is
$O_m=\cup_{\pi\in S_k} \pi(D)$ for some $l$-box $D$. By the previous
condition $\pi_1(D)\neq \pi_2(D)$, if $\pi_1\neq \pi_2$, hence each $O_m$
is the disjoint union of exactly $k!$ $l$-boxes.
 Then
$\mu_{[k]}(F^{-1}(Q)\triangle F^{-1}(Z))<\e$, where $F^{-1}(Z)=
\cup^q_{m=1} F^{-1}(O_m)$\,.
For each $1\leq r \leq k$ we consider the partition 
$\xo^r=\cup_{j=1}^l{\bf  P^j_r}$,
where ${\bf P^j_r}= F^{-1}_{[r]}(\frac{j-1}{l}, \frac{j}{l})\,.$
We call the resulting $l$-hyperpartition $\widetilde{\cH}$.
Note that by the $S_r$-invariance of the separable realization each
${\bf P^j_r}$ is an $r$-uniform hypergraph and also
${\bf P^j_r}\in\sigma([r])^*$.

\begin{lemma} \label{cell}
${\bf C}$ is an $\wch$-cell if and only if
${\bf C}=F^{-1}(\cup_{\pi\in S_k} \pi(D))$, where $D$ is an $l$-box in 
$[0,1]^{2^k-1}$.
\end{lemma}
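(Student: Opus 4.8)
The plan is to read off both the $\wch$-cells and the symmetrized boxes from the single ``profile'' of $l$-interval indices carried by the point $F(\ux)\in[0,1]^{2^k-1}$, and to match them using the equivariance of $F$. First I would recall the setup. The parts of $\wch$ are ${\bf P}^j_r=F_{[r]}^{-1}\big((\tfrac{j-1}{l},\tfrac{j}{l})\big)$, and since each $F_A$ is $l(A)$-measurable while the sets of $l(A)$ are $S_A$-invariant, the function $F_A$ on $\xo^k$ depends only on the \emph{unordered} $A$-subtuple of its argument. Consequently, for $\ux=(x_1,\dots,x_k)\in K_k(\xo)$ the $A$-subtuple of $\ux$ lies in ${\bf P}^j_{|A|}$ precisely when $F_A(\ux)\in(\tfrac{j-1}{l},\tfrac{j}{l})$. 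Off the null set where some $F_A(\ux)$ is a multiple of $1/l$, I therefore record for each nonempty $A\subseteq[k]$ the index $b_A(\ux)\in\{0,\dots,l-1\}$ with $F_A(\ux)\in(\tfrac{b_A(\ux)}{l},\tfrac{b_A(\ux)+1}{l})$; these $2^k-1$ numbers determine exactly which box of the $l$-grid contains $F(\ux)$.

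With this notation the defining equivalence of $\wch$ becomes transparent: $\ux$ and $\underline{y}$ lie in the same $\wch$-cell iff there is a permutation $\sigma\in S_k$ with $b_A(\ux)=b_{\sigma(A)}(\underline{y})$ for every nonempty $A\subseteq[k]$. Next I would translate the right-hand side into the geometry of boxes. Recall that $S_k$ acts on $[0,1]^{2^k-1}$ by $(y_A)^\pi=(y_{\pi^{-1}(A)})$, and that the invariance property $F_A(x)=F_{A^\pi}(x^\pi)$ rearranges to $F_A(x^\pi)=F_{\pi^{-1}(A)}(x)$, that is $F(x^\pi)=F(x)^\pi$, so that $F$ is $S_k$-equivariant. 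If $D$ denotes the $l$-box whose $A$-th interval has index $b_A(\ux)$, then the box $\pi(D)$ has $A$-index $b_{\pi^{-1}(A)}(\ux)$, and hence $F(\underline{y})\in\pi(D)$ exactly when $b_A(\underline{y})=b_{\pi^{-1}(A)}(\ux)$ for all $A$. After reindexing $A\mapsto\pi(A)$ and writing $\sigma=\pi$ this is literally the condition displayed above. Therefore the $\wch$-cell of $\ux$ coincides (modulo the excluded null set) with $F^{-1}\big(\cup_{\pi\in S_k}\pi(D)\big)$, proving that every cell has the asserted form.

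For the converse I would argue that every symmetrized box yields a genuine cell. Given an arbitrary $l$-box $D$, the set $\cup_{\pi\in S_k}\pi(D)$ has positive Lebesgue measure, so by Lemma \ref{lifting} (the measure-algebra isomorphism induced by $F$) the set $F^{-1}\big(\cup_{\pi\in S_k}\pi(D)\big)$ has positive measure and in particular is nonempty; picking any point $\ux$ in it, the first part shows it equals the $\wch$-cell of $\ux$. Since boxes in a common $S_k$-orbit produce the same union, the cells are indexed precisely by the $S_k$-orbits of $l$-boxes, which is exactly the statement of the lemma.

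I expect the only delicate point to be the bookkeeping between the two distinct $S_k$-actions: the permutation $\sigma$ in the definition of a $\wch$-cell acts on the index sets $A\subseteq[k]$ through the reordering of the vertices of $\ux$, whereas the action used for the boxes permutes the coordinates of the cube $[0,1]^{2^k-1}$. Making the identification $F(x^\pi)=F(x)^\pi$ precise, and checking that the order-of-coordinates ambiguity inside each $F_A$ is harmless (which is where the $S_A$-invariance of $l(A)$ enters), is the step that requires care; everything else is a direct translation, valid modulo the evident null set of grid-boundary points.
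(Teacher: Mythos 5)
Your argument is essentially the paper's own proof: the paper likewise unfolds the definition of a $\wch$-cell into the condition that, after a permutation $\sigma\in S_k$, each $A$-subtuple lands in the same ${\bf P}^j_{|A|}$, and then translates this via the equivariance $F_A(x)=F_{A^\pi}(x^\pi)$ into membership of $F(\ux)$ in a common $S_k$-orbit of $l$-boxes. Your added bookkeeping (the profile $b_A(\ux)$, the explicit identification $F(x^\pi)=F(x)^\pi$, the grid-boundary null set, and the nonemptiness of $F^{-1}(\cup_\pi\pi(D))$ via Lemma \ref{lifting}) is a correct and more careful elaboration of the same route, not a different one.
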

\proof
By definition $(a_1,a_2,\dots,a_k)\in \xo^k$ and $(b_1,b_2,\dots,b_k)\in
\xo^k$
are in the same $\wch$-cell if and only if
there exists $\pi\in S_k$ such that for any $A\subseteq [k]$
$(a_{i_1},a_{i_2},\dots, a_{i_{|A|}})$ and
$(b_{i_{\pi(1)}},b_{i_{\pi(2)}}\dots,b_{i_{\pi(|A|)}} )$ are in
the same ${\bf  P^j_r}$. That is $F_A(a_1,a_2,\dots,a_k)$ and \\
\noindent
$F_A(b_{\pi(1)}, (b_{\pi(2)},\dots, (b_{\pi(k)}$ are in the
same $l$-box. \qed

\vskip 0.1in
\noindent
Since
$\mu_{[k]}(H\triangle \cup^q_{j=1} F^{-1}(O_m))<\e$, our Proposition
follows. \qed

\vskip 0.2in

\noindent
Now we return to the proof of the Hypergraph Regularity Lemma. First pick an
$r$-hypergraph ${\bf \wp^j_r}$ on $\xo$ such that
$\mu_{[r]}({\bf \wp^j_r}\triangle{\bf  P^j_r})=0$, 
${\bf \wp^j_r}\in \cP_{[r]}$ and
$\cup_{j=1}^l {\bf \wp^j_r}=K_r(\xo)$.
Let $[\{S_{P^j_{r,i}}\}^\infty_{i=1}]={\bf \wp^j_r}\,.$
Then for $\omega$-almost all indices $\cup_{j=1}^l P^j_{r,i}= K_r(X_i)$ is
an $F(l)$-equitable $l$-partition and $|H_i\triangle \cup^q_{m=1} C^i_m|<\e$
for the induced $\cH$-cell approximation.
Here $\cup^q_{m=1} {\bf\widetilde{C}_m}$ is the $\wch$-cell approximation with
respect to the $l$-hyperpartitions $\cup^l_{j=1} {\bf \wp^j_r}=K_r(\xo)$ and
$[\{S_{C^i_m}\}^\infty_{i=1}]={\bf \widetilde{C}_m}$.

\noindent
The only thing remained to be proved is that for $\omega$-almost all
indices $i$ the resulting $l$-hyperpartitions are $F(l)$-regular. If it does
not hold then there exists $1\leq r \leq k$ and $1\leq j \leq l$ such that
for almost all $i$ there exists a cylinder intersection $W_i\subset
K_r(X_i)$,
$|W_i|\geq \e|X_i|$, such that
\begin{equation} \label{egyenlet}
\left|\frac{|P^j_{r,i}|}{|K_r(X_i)|}-\frac{|P^j_{r,i}\cap W_i|}
{|W_i|}\right|>\e\,.
\end{equation}
Let ${\bf W }=[\{S_{W_i}\}^\infty_{i=1}]\,.$ Then
$W\subset \cup_{B\subsetneq [r]} \sigma(B)$. Hence ${\bf \wp^j_r}$ and 
${\bf W}$ are
independent sets. However, by (\ref{egyenlet})
$$\mu_{[r]}({\bf \wp^j_r})\mu_{[r]}({\bf W})\neq \mu_{[r]}({\bf \wp^j_r} 
\cup {\bf W})\,,$$
leading to a contradiction. \qed

\section{Appendix on basic measure theory}
In this section we collect some of the basic results of measure theory
we frequently use in our paper.

\vskip 0.1in
\noindent
\underline{Separable measure spaces:}
Let $(X,\cA,\mu)$ be a probability measure space. Then we call $A,A'\in\cA$ 
equivalent if $\mu(A\triangle A')=0$. The equivalence classes form
a complete metric space, where $d([A],[B])=\mu(A\triangle B)\,.$
This classes form a Boolean-algebra as well, called the
{\bf measure algebra} $\cM(X,\cA,\mu)$. We say that $(X,\cA,\mu)$ is
a {\bf separable} measure space if 
$\cM(X,\cA,\mu)$ is a separable metric 
space. It is important to note that if $(X,\cA,\mu)$ is separable and atomless,
then its measure algebra is isomorphic to the measure algebra of
the standard Lebesgue space $([0,1],\cB,\lambda)$, where $\cB$ is
the $\sigma$-algebra of Borel sets (see e.g. \cite{Hal}.
We use the following folklore version of this
theorem.
\begin{lemma} \label{measurealgebra}
If $(X,\cA,\mu)$ is a separable and atomless measure algebra, then there
exists a map $f:X\to [0,1]$ such that
 $f^{-1}(\cB)\subset \cA$,
$\mu(f^{-1}(U))=\lambda(U)$ for any $U\in \cB$ and
for any $L\in\cA$ there exists
$M\in\cB$ such that $L$ is equivalent to $f^{-1}(M)$. 
\end{lemma}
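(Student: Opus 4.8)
The plan is to take for granted the measure-algebra isomorphism stated just above this lemma and to upgrade it to an honest point map by realizing it on a countable family of generators of $\cB$. Write $\Phi:\cM([0,1],\cB,\lambda)\to\cM(X,\cA,\mu)$ for the measure-preserving Boolean isomorphism supplied by that theorem; the task is to produce $f:X\to[0,1]$ whose induced map $[M]\mapsto[f^{-1}(M)]$ on measure algebras coincides with $\Phi$. Once this agreement is established, the three required properties are immediate: $f^{-1}(\cB)\subseteq\cA$ by measurability of $f$; $\mu(f^{-1}(U))=\lambda(U)$ because $\Phi$ preserves measure; and the existence of $M\in\cB$ with $f^{-1}(M)$ equivalent to a given $L\in\cA$ is exactly the surjectivity of $\Phi$, applied to $[L]\in\cM(X,\cA,\mu)$.

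First I would take as generators the dyadic intervals $I_{n,j}=[\,j2^{-n},(j+1)2^{-n})$, $0\le j<2^n$, which generate $\cB$ and satisfy the refinement relations $I_{n,j}=I_{n+1,2j}\cup I_{n+1,2j+1}$ together with $\bigcup_j I_{n,j}=[0,1]$ for each fixed $n$. I would then choose representatives $A_{n,j}\in\cA$ of the classes $\Phi([I_{n,j}])$; by construction $\mu(A_{n,j})=2^{-n}$. These sets inherit the refinement and partition relations only up to null sets, so the next step is a bookkeeping correction. Since only countably many sets and countably many relations are involved, there is a single conull set $X_0\in\cA$ outside of which every relation $A_{n,j}=A_{n+1,2j}\sqcup A_{n+1,2j+1}$ and $\bigsqcup_j A_{n,j}=X$ holds exactly; replacing each $A_{n,j}$ by $A_{n,j}\cap X_0$ and absorbing the leftover into one cell, I may assume that for every $n$ the family $\{A_{n,j}\}_j$ is an honest partition of $X$ refining into the level $n+1$ partition.

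Next I would define $f$ by the nested-interval rule: for $x\in X$ let $j_n(x)$ be the unique index with $x\in A_{n,j_n(x)}$; the closed intervals $\overline{I_{n,j_n(x)}}$ are nested with lengths tending to $0$, so their intersection is a single point, which I declare to be $f(x)$. This makes $f$ an everywhere-defined measurable map with $f^{-1}(I_{n,j})$ equal to $A_{n,j}$ up to the null set of points mapped to dyadic endpoints (that fibre is null because it sits inside $\bigcap_m A_{m,\cdot}$, of measure $\le 2^{-m}$ for all $m$). Hence $[f^{-1}(I_{n,j})]=\Phi([I_{n,j}])$ for all $n,j$. Since $[M]\mapsto[f^{-1}(M)]$ and $\Phi$ are both measure-algebra homomorphisms agreeing on the generating family $\{I_{n,j}\}$, a standard monotone-class argument forces them to agree on all of $\cM([0,1],\cB,\lambda)$, which completes the proof.

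The step I expect to require the most care is the reconciliation of the measure-algebra relations with honest set-theoretic ones: the isomorphism only yields equalities of equivalence classes, and one must verify that the countably many null-set discrepancies can be swept into a single conull $X_0$, so that $f$ is a genuine function defined at every point of $X$. I note that the atomlessness of $\mu$ enters only through the cited isomorphism theorem and plays no further role here; separability is what lets all corrections be made with countably many null sets. The remaining verifications—that nested dyadic intervals shrink to a point, that the dyadic-endpoint fibres are null, and that agreement on a generating algebra propagates to the full $\sigma$-algebra—are routine.
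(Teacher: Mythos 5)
Your proposal is correct and takes essentially the same route as the paper: both start from the abstract measure-algebra isomorphism with $([0,1],\cB,\lambda)$, choose sets in $\cA$ representing the dyadic intervals, and define $f$ by intersecting the resulting nested dyadic intervals. The only differences are bookkeeping — you repair all countably many null-set discrepancies at once on a single conull set (making $f$ total) and spell out the endpoint-fibre and monotone-class verifications, whereas the paper recursively chooses honestly disjoint, nested representatives, leaves $f$ defined off a null set, and declares the remaining checks easy.
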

\proof
Let $I_0$ denote the interval $[0,\frac{1}{2}]$, $I_1=[\frac{1}{2},1]$.
Then let $I_{0,0}=[0,\frac{1}{4}]$, $I_{0,1}=[\frac{1}{4},\frac{1}{2}]$,
$I_{1,0}=[\frac{1}{2},\frac{3}{4}]$, $I_{1,1}=[\frac{3}{4},1]$.
Recursively, we define the dyadic intervals $I_{\alpha_1,\alpha_2,\dots,
\alpha_k}$, where $(\alpha_1,\alpha_2,\dots,
\alpha_k)$ is a $0-1$-string.
Let $T$ be the Boolean-algebra isomorphism between the measure algebra
of $(X,\cA,\mu)$ and the measure algebra of $([0,1],\cB,\lambda)$.
Then we have disjoint sets $U_0, U_1\in\cA$ such that
$T([U_0])=[I_0]$, $T([U_1])=[I_1]$. Clearly $\mu(X\backslash (U_0\cup U_1)=0$.
Similarly, we have disjoint subsets of $U_0$, $U_{0,0}$ and $U_{0,1}$
such that $T([U_{0,0}])=[I_{0,0}]$ and $T([U_{0,1}])=[I_{0,1}]$.
Recursively, we define $U_{\alpha_1,\alpha_2,\dots,
\alpha_k}\in \cA$ such that
$U_{\alpha_1,\alpha_2,\dots,
\alpha_{k-1},0}$ and $U_{\alpha_1,\alpha_2,\dots,
\alpha_{k-1},0}$ are disjoint and $T([U_{\alpha_1,\alpha_2,\dots,
\alpha_k}])= I_{\alpha_1,\alpha_2,\dots,
\alpha_k}$. The set of points in $X$ which are not included in some
$U_{\alpha_1,\alpha_2,\dots,
\alpha_k}$ for some $k>0$ has measure zero.
Now define
$$f(p):=\cap^\infty_{k=1} I_{\alpha_1,\alpha_2,\dots,
\alpha_k}\,,$$
where for each $k\geq 1$, $p\in U_{\alpha_1,\alpha_2,\dots,
\alpha_k}$. It is easy to see that $f$ satisfies the conditions
of our lemma. \qed

\vskip 0.1in
\noindent
\underline{Generated $\sigma$-algebras:} 
Let $(X,\cC,\mu)$ be a probability measure space and $\cA_1,\cA_2,\dots,\cA_k$
be sub-$\sigma$-algebras.
Then we denote by $\langle \cA_i\mid 1\leq i\leq k\rangle$ 
the generated $\sigma$-algebra that
is the smallest sub-$\sigma$-algebra of $\cC$ containing the $\cA_i$'s.
Then the equivalence classes  $$[\cup^n_{j=1} (A^j_1\cap A^j_2\cap\dots
\cap A^j_k)]\,,$$ where $A^j_i\in\cA_i$ and $(A^s_1\cap A^s_2\cap\dots
\cap A^s_k)\cap (A^t_1\cap A^t_2\cap\dots
\cap A^t_k)=\emptyset $ if $s\neq t$
 form a dense subset in the measure algebra
$\cM(X,\langle \cA_i\mid 1\leq i\leq k \rangle,\mu)$ 
with respect to the metric defined above
(see \cite{Hal}).
\vskip 0.1in
\noindent
\underline{Independent subalgebras and product measures:}
The sub-$\sigma$-algebras $\cA_1,\cA_2,\dots,\cA_k\subset \cC$ are 
{\bf independent} subalgebras if
$$\mu(A_1)\mu(A_2)\dots\mu(A_k)=\mu(A_1\cap A_2\cap\dots\cap A_k)\,,$$
if $A_i\in \cA_i$.
\begin{lemma}
\label{fremlin}
Let $\cA_1,\cA_2,\dots,\cA_k\subset \cC$ be independent subalgebras
as above and $f_i:X\to [0,1]$ be maps such that $f_i^{-1}$ defines
isomorphisms between the measure algebras $\cM(X,\cA_i,\mu)$ and
$\cM([0,1],\cB,\lambda)$. Then the map $F^{-1}$, $F=\oplus_{i=1}^k f_i:X\to
[0,1]^k$ defines an isomorphism between the measure algebras
$\cM(X,\langle \cA_i\mid 1\leq i\leq k \rangle,\mu)$ and
$\cM([0,1]^k,\cB^k,\lambda^k)$.
\end{lemma}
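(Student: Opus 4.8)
The plan is to verify directly that $F^{-1}$ satisfies the three defining properties of a measure-algebra isomorphism: it is a Boolean homomorphism carrying $\cB^k$ into the generated algebra $\langle\cA_i\mid 1\le i\le k\rangle$, it preserves measure, and it is onto. First I would note that on a box $U_1\times\dots\times U_k$ with each $U_i\in\cB$ one has
$$F^{-1}(U_1\times\dots\times U_k)=\bigcap_{i=1}^k f_i^{-1}(U_i),$$
and that $f_i^{-1}(U_i)\in\cA_i$ since $f_i^{-1}$ takes $\cB$ into $\cA_i$. Because the boxes generate $\cB^k$ and taking preimages commutes with countable unions, intersections and complements, this shows $F^{-1}(\cB^k)\subseteq\langle\cA_i\mid 1\le i\le k\rangle$, so $F$ is measurable into the generated algebra and $F^{-1}$ is a Boolean homomorphism at the level of measure algebras.

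The measure-preservation step is where the independence hypothesis enters. On a box, independence of the $\cA_i$ together with the fact that each $f_i^{-1}$ is measure-preserving gives
$$\mu\bigl(F^{-1}(U_1\times\dots\times U_k)\bigr)=\mu\Bigl(\bigcap_{i=1}^k f_i^{-1}(U_i)\Bigr)=\prod_{i=1}^k\mu\bigl(f_i^{-1}(U_i)\bigr)=\prod_{i=1}^k\lambda(U_i)=\lambda^k(U_1\times\dots\times U_k).$$
Thus the two probability measures $M\mapsto\mu(F^{-1}(M))$ and $\lambda^k$ on $\cB^k$ agree on the $\pi$-system of boxes, which generates $\cB^k$; by the standard uniqueness theorem for measures agreeing on a generating $\pi$-system (the Dynkin $\pi$--$\lambda$ theorem) they coincide on all of $\cB^k$. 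Hence $\mu(F^{-1}(M))=\lambda^k(M)$ for every $M\in\cB^k$. Since $F^{-1}(M\triangle N)=F^{-1}(M)\triangle F^{-1}(N)$, this makes $F^{-1}$ an isometry for the metrics $d(M,N)=\lambda^k(M\triangle N)$ and $d(B,B')=\mu(B\triangle B')$, and the isometry immediately yields injectivity on the measure algebra.

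It remains to establish surjectivity, and here I would invoke the description of the generated $\sigma$-algebra recalled in the Appendix: the classes of finite disjoint unions $\bigcup_j(A^j_1\cap\dots\cap A^j_k)$ with $A^j_i\in\cA_i$ are dense in $\cM(X,\langle\cA_i\mid 1\le i\le k\rangle,\mu)$. For each factor $A^j_i\in\cA_i$ the isomorphism $f_i^{-1}$ supplies a Borel set $U^j_i$ with $\mu(A^j_i\triangle f_i^{-1}(U^j_i))=0$, and since the symmetric difference of two intersections is controlled by the sum of the symmetric differences of their factors, $\bigcap_i A^j_i$ is equivalent to $\bigcap_i f_i^{-1}(U^j_i)=F^{-1}(\prod_i U^j_i)$. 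Hence every element of the dense generating family lies in the image of $F^{-1}$, so that image is dense; being an isometric copy of the complete space $\cM([0,1]^k,\cB^k,\lambda^k)$, the image is complete, hence closed, and a dense closed subset is the whole space, so $F^{-1}$ is onto. Combined with the previous steps this makes $F^{-1}$ a measure-algebra isomorphism. The one point requiring genuine care rather than bookkeeping is precisely this surjectivity: independence only hands us the box preimages directly, so the argument must combine the Appendix density statement with completeness of the Lebesgue measure algebra to upgrade \emph{dense image} to \emph{full image}.
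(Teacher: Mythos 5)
Your proof is correct and takes essentially the same route as the paper's: independence gives measure preservation of $F^{-1}$ on product sets, and the Appendix density of finite disjoint unions of intersections $A^j_1\cap\dots\cap A^j_k$ supplies the dense subsets matching the disjoint unions of boxes, so that completeness of the measure algebras finishes the job. The paper compresses all of this into the remark that $F^{-1}$ is an isometry between dense subsets of the two complete measure algebras; your $\pi$--$\lambda$ step (making $F^{-1}$ measure-preserving on all of $\cB^k$ rather than extending an isometry from a dense part) and the dense-plus-closed argument for surjectivity are careful expansions of exactly that sketch, not a different method.
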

\proof
Observed that
$$\mu (F^{-1}(\cup_{i=1}^s [A^i_1\times\dots\times A^i_k]))=\sum^s_{i=1}
\lambda^k[A^i_1\times\dots\times A^i_k]$$
whenever $\{A^i_1\times\dots\times A^i_k\}^s_{i=1}$ are disjoint product
sets. Hence $F^{-1}$ defines an isometry between dense subsets of the two
measure algebras. \qed

\vskip 0.1in
\noindent
\underline{Radon-Nykodym Theorem:} 
Let $(X,\cA,\mu)$ be a probability measure space and $\nu$ be 
an absolutely continuous measure with respect to $\mu$. That is
if $\mu(A)=0$ then  $\nu(A)=0$ as well.
Then there exists an integrable $\cA$-measurable function $f$
such that
$$\mu(A)=\int_A f d\mu$$
for any $A\in\cA$.

\vskip 0.1in
\noindent
\underline{Conditional expectation:} 
Let $(X,\cA,\mu)$ be a probability measure space
and $\cB\subset\cA$ be a sub-$\sigma$-algebra. Then by the
Radon-Nykodym-theorem for any integrable
$\cA$-measurable function $f$ there exists an integrable
$\cB$-measurable function $E(f\mid\cB)$ such that
$$\int_B E(f\mid\cB) d\mu=\int_B f d\mu\,,$$
if $B\in\cB$. The function $E(f\mid \cB)$ is called the conditional
expectation of $f$ with respect to $\cB$. It is unique up to a
zero-measure perturbation.
Note that if $a\leq f(x)\leq b$ for almost all $x\in X$, then
$a\leq E(f\mid \cB)(x)\leq b$ for almost all $x\in X$ as well.
Also, if $g$ is a bounded $\cB$-measurable function, then
$$E(fg\mid\cB)=E(f\mid \cB) g\,\,\quad\mbox{almost everywhere}\,.$$
The map $f\to E(f,\cB)$ extends to a Hilbert-space projection
$E:L^2(X,\cA,\mu)\to L^2(X,\cB,\mu)$.

\vskip 0.1in
\noindent
\underline{Lebesgue density theorem:} 
Let $A\in \bR^n$ be a measurable set. Then almost all points $x\in A$ is
a {\bf density point}. The point $x$ is a density point if
$$\lim_{r\to 0} \frac{Vol (B_r(x)\cap A)} {Vol (B_r(x))}=1\,,$$
where $Vol$ denotes the $n$-dimensional Lebesgue-measure.

\noindent
G\'abor Elek
\noindent
Alfred Renyi Institute of the Hungarian Academy of Sciences
\noindent
POB 127, H-1364, Budapest, Hungary,  elek@renyi.hu

\vskip 0.2in

\noindent
Bal\'azs Szegedy
\noindent
University of Toronto, Department of Mathematics,
\noindent
St George St. 40, Toronto, ON, M5R 2E4, Canada


\begin{thebibliography}{99} 
\bibitem{Borgs} C. Borgs, J. Chayes, L. Lovasz, V. T. S\'os, B. Szegedy and
K. Vesztergombi
{\em Graph limits and parameter testing.}  
STOC'06: Proceedings of the 38th Annual ACM Symposium on Theory of Computing, 
261--270, ACM, New York, 2006.
\bibitem{Gow} T. Gowers, 
{\em Quasirandomness, counting and regularity for 3-uniform hypergraphs.}
 Combin. Probab. Comput. {\bf 15} (2006),  no. 1-2, 143--184.
\bibitem{Hal} P. R.  Halmos,
{\em Measure Theory} Van Nostrand Company, Inc., New York, N. Y., 1950.
\bibitem{Ish} Y. Ishigami,
{\em A Simple Regularization of Hypergraphs} 

\noindent
preprint 
http://arxiv.org/abs/math/0612838
\bibitem{Loeb} P. E. Loeb,
{\em Conversion from nonstandard to standard measure spaces and applications 
in probability theory.}
Trans. Amer. Math. Soc.  {\bf 211}  (1975), 113--122. 
\bibitem{LSZ} L. Lovasz, B. Szegedy,
{\em Limits of dense graph sequences.}
J. Combin. Theory Ser. B {\bf 96} (2006), no. 6, 933-957.
\bibitem{Mah} D. Maharam,
{\em On homogeneous measure algebras. }
Proc. Nat. Acad. Sci. U. S. A.  {\bf28},  (1942). 108--111. 
\bibitem{NRS} B. Nagle, V. R\"odl and M. Schacht,
{\em The counting lemma for regular $k$-uniform hypergraphs.}
Random Structures Algorithms {\bf 28}  (2006),  no. 2, 113--179.
\bibitem{RSko} V. R\"odl, J. Skokan,
{\em Regularity lemma for $k$-uniform hypergraphs.}
Random Structures Algorithms {\bf 25} (2004), no. 1, 1--42. 
\bibitem{S}
J. Solymosi,
{\em A note on a question of Erdös and Graham. }
Combin. Probab. Comput. {\bf 13}  (2004),  no. 2, 263--267. 
\bibitem{Tao} T. Tao,
{\em A variant of the hypergraph removal lemma.}
J. Combin. Theory Ser. A {\bf 113} (2006), no. 7, 1257--1280.




\end{thebibliography}
\end{document}